\long\def\symbolfootnote[#1]#2{\begingroup
\def\thefootnote{\fnsymbol{footnote}}\footnote[#1]{#2}\endgroup}
\newtheorem{thm}{Theorem}[section]
\newtheorem{cor}[thm]{Corollary}
\newtheorem{lemma}[thm]{Lemma}
\newtheorem{prop}[thm]{Proposition}
\theoremstyle{definition}
\newtheorem{defin}[thm]{Definition}
\newtheorem{rem}[thm]{Remark}
\newtheorem*{rema}{Remark}
\newcommand{\inv}{^{-1}}
\newcommand{\la}{\langle}
\newcommand{\ra}{\rangle}
\newcommand{\ZZ}{\mathbf Z}
\def\Aut{\mathop{\mathrm{Aut}}\nolimits}
\DeclareMathOperator{\Out}{Out}
\title{Twist-rigid Coxeter groups}
\author{}
\date{}
\begin{document}

\maketitle

\begin{center}\bf
Pierre-Emmanuel Caprace$^a$\symbolfootnote[1]{Supported by the Belgian Fund for Scientific Research (FNRS).}
\& Piotr Przytycki$^b$\symbolfootnote[2]{Partially supported by
MNiSW grant N201 012 32/0718 and the Foundation for Polish Science.}
\end{center}

\begin{center}\it
$^a$ Universit\'e catholique de Louvain, D\'epartement de Math\'ematiques, \\
Chemin du Cyclotron 2, 1348 Louvain-la-Neuve, Belgium\\
\emph{e-mail:} \texttt{pe.caprace@uclouvain.be}
\end{center}

\begin{center}\it
$^b$ Institute of Mathematics, Polish Academy of Sciences,\\
\'Sniadeckich 8, 00-956 Warsaw, Poland\\
\emph{e-mail:} \texttt{pprzytyc@mimuw.edu.pl}
\end{center}

\begin{abstract}
\noindent We prove that two angle-compatible Coxeter generating sets
of a given finitely generated Coxeter group are conjugate provided
one of them does not admit any elementary twist. This confirms a
basic case of a general conjecture which describes a potential
solution to the isomorphism problem for Coxeter groups.
\end{abstract}

\bigskip
{\it MSC: 20F10, 20F55}

\smallskip
{\it Keywords: Coxeter groups, isomorphism problem, twists,
rigidity, hierarchy}

\section{Introduction}
\label{introduction}

A subset $S$ of a group $W$ is called a \textbf{Coxeter generating
set} if there is a Coxeter matrix $(m_{s,s'})_{s,s'\in S}$ such that
the relations $(ss')^{m_{s,s'}}=1$ provide a presentation of $W$. A
group admitting a Coxeter generating set is called a \textbf{Coxeter
group}. In this article we consider only finitely generated Coxeter
groups; this hypothesis will not be repeated anymore. Any Coxeter
generating set of such a group is automatically finite.

It is a basic and natural problem to determine all possible Coxeter
generating sets for a given Coxeter group $W$. Finding an
algorithmic way to describe these would actually solve the
isomorphism problem for Coxeter groups, which as of today remains
open. Substantial progress in this direction has been accomplished
in recent years (see \cite{MuhlherrSurvey} for a 2006 survey),
providing in particular some important reduction steps which we
shall now briefly outline.

Given a fixed Coxeter generating set $S$ for $W$, an
\textbf{$S$-reflection} (or a \textbf{reflection}, if the dependence
on the generating set $S$ does not need to be emphasised) is an
element conjugate to some element of $S$. Two Coxeter generating
sets $S$ and $R$ for $W$ are called \textbf{reflection-compatible}
if the set of $S$-reflections is contained in the set of
$R$-reflections. Then the sets of $S$- and $R$-reflections coincide
(see Corollary~\ref{cor:ReflCompatible} in Appendix~\ref{App1}). We
shall further say that $S$ and $R$ are \textbf{angle-compatible} if
every \textbf{spherical} pair $\{s, s'\} \subset S$ (\emph{i.e.}\
every pair generating a finite subgroup) is conjugate to some pair
$\{r, r'\} \subset R$ (in particular if we put $s=s'$ this implies
that $S$ and $R$ are reflection-compatible). Saying that $S$ and $R$
are angle-compatible means exactly that $S$ is \emph{sharp-angled}
with respect to $R$, in the language of \cite{MuhlherrSurvey} and
\cite{MarquisMuhlherr}. However, since this implies that conversely
every spherical pair $\{r, r'\} \subset R$ is conjugate to some pair
$\{s, s'\} \subset S$ (see Corollary~\ref{cor:AngleCompatible}), we
prefer a symmetric way of phrasing that.

Reflection- and angle-compatibility are well illustrated by the
simplest class of Coxeter groups, namely finite dihedral groups.
Indeed, the dihedral group of order $20$ is isomorphic to the direct
product of $\ZZ/2$ with the dihedral group of order $10$; the
corresponding Coxeter generating sets are not reflection-compatible.
Moreover, identifying this group with the automorphism group of a
regular decagon, every generating pair of reflections is a Coxeter
generating set, and any two such pairs are reflection-compatible.
However, they are angle-compatible if and only if the associated
pairs of axes intersect at the same angle.

A first motivation to consider the notion of angle-compatibility
comes from the following basic observation.

\begin{rema}
The group $\Aut(W)$ contains a finite index subgroup all of whose
elements map every Coxeter generating set to an angle-compatible
one.
\end{rema}

Indeed, the Coxeter group $W$ has finitely many conjugacy classes of
finite subgroups; in particular, there are finitely many conjugacy
classes of pairs of elements generating a finite subgroup. The
finite index normal subgroup of $\Aut(W)$ preserving the conjugacy
class of each of these pairs satisfies the desired property.

\smallskip
A deeper reason to consider angle-compatible Coxeter generating sets
comes from the fact that, by the main results of
\cite{HowlettMuhlherr} and \cite{MarquisMuhlherr} there are certain
explicit operations which transform any two Coxeter generating sets
into angle-compatible ones (see Appendix~\ref{App2}). It is further
conjectured in \cite{MuhlherrSurvey} that, up to conjugation, any
two angle-compatible Coxeter generating sets may be obtained from
one another by a finite sequence of \emph{elementary twists}, a
notion which was introduced in \cite{BMMN}. If this conjecture is
confirmed, it implies in particular that the isomorphism problem is
decidable in the class of Coxeter groups (see \cite[Corollary
1.1]{MarquisMuhlherr}). The main goal of this paper is to prove a
basic case of this conjecture. In order to provide a precise
statement, we first recall the definition of elementary twists in
detail.

Let $S \subset W$ be a Coxeter generating set. Given a subset $J
\subset S$, we denote by $W_J$ the subgroup of $W$ generated by $J$.
We call $J$ \textbf{spherical} if $W_J$ is finite. If $J$ is
spherical, let $w_J$ denote the longest element of $W_J$. We say
that two elements $s,s'\in S$ are \textbf{adjacent} if $\{s,s'\}$ is
spherical. Given a subset $J \subset S$, we denote by $J^\bot$ the
set of those elements of $S\setminus J$ which commute with $J$. A
subset $J\subset S$ is \textbf{irreducible} if it is not of the form
$K\cup K^\bot$ for some non-empty proper subset $K\subset J$.

Let $J\subset S$ be an irreducible spherical subset and assume that
$S\setminus (J\cup J^\bot)$ is a union of two subsets $A$ and $B$
such that $a$ and $b$ are not adjacent for all $a \in A$ and $b \in
B$. This simply means that $W$ splits as an amalgamated product over
$W_{J \cup J^\bot}$. Note that $A$ and $B$ are in general not
uniquely determined by $J$.

We then consider the map $\tau \colon S \to W$ defined by
$$
\tau(s)= \left\{\begin{array}{ll}
s & \text{if } s \in A,\\
w_J s w_J & \text{if } s \in S \setminus A,
\end{array}\right.
$$
which is called an \textbf{elementary twist}. The relevance of this
notion was first highlighted in \cite[Theorem~4.5]{BMMN}, where it
is shown that any elementary twist $\tau$ transforms $S$ into
another Coxeter generating set for $W$.

\medskip
A Coxeter generating set $S$ is called \textbf{twist-rigid} if it
does not admit any elementary twist. The purpose of this paper is to
study Coxeter groups admitting some twist-rigid Coxeter generating
set; these are called \textbf{twist-rigid} Coxeter groups. Observe
that if $S$ is a Coxeter generating set for $W$ which admits an
elementary twist $\tau$, then $S$ and $\tau(S)$ are two
non-conjugate angle-compatible Coxeter generating sets. Our main
result is the following converse.

\begin{thm}\label{main}
Let $S$ and $R$ be
angle-compatible Coxeter generating sets for a group $W$. If $S$ is twist-rigid, then $S$ and $R$ are conjugate.
\end{thm}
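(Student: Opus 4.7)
The plan is to argue by contraposition with induction on $|S|$: supposing $R$ is \emph{not} conjugate to $S$ in $W$, I would produce an elementary twist of $S$, contradicting twist-rigidity. The raw material is provided by Corollaries~\ref{cor:ReflCompatible} and~\ref{cor:AngleCompatible}: the $S$- and $R$-reflections agree, and the conjugacy classes of spherical pairs coincide in $S$ and in $R$.

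As a first reduction, if $S$ splits as $S = S_1 \sqcup S_2$ with $S_1 \subset S_2^\bot$, then $W = W_{S_1} \times W_{S_2}$. The factors of this decomposition are essentially canonical in the abstract group (recoverable, for instance, from centralisers of involutions and finite normal subgroups), so $R$ inherits a parallel decomposition $R = R_1 \sqcup R_2$ with $R_i$ angle-compatible to $S_i$ in $W_{S_i}$. Twist-rigidity descends to each~$S_i$, so the induction hypothesis handles each factor, and we may assume $S$ is irreducible. The dihedral case (small $|S|$) is read off from the discussion given in the introduction.

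The core of the argument is local matching followed by descent to a parabolic. Pick $s \in S$; since $s$ is an $R$-reflection, we may conjugate $R$ so that $s \in R$. The centraliser $Z_W(s)$ contains the finite-index subgroup $\langle s \rangle \times W_{\{s\}^\bot}$ with a controlled complement, and angle-compatibility forces the $R$-neighbours of $s$ to be conjugate, inside $Z_W(s)$, to the $S$-neighbours of $s$ with matching edge labels. After further conjugating $R$ by an element of $Z_W(s)$, we may assume the whole $S$-star of $s$ lies in $R$. From here I would cut $W$ along a parabolic associated with~$s$ (typically the standard parabolic $W_{S \setminus \{s\}}$ or $W_{(\{s\}\cup\{s\}^\bot)^c}$) and try to apply the induction hypothesis to the subsystems induced on the pieces.

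The main obstacle is that twist-rigidity is \emph{not} hereditary: the restriction of $S$ to $S \setminus \{s\}$ can perfectly well admit elementary twists that $S$ itself does not. Overcoming this, I expect, requires a \emph{hierarchical} descent rather than a naive induction on $|S|$, as suggested by the keywords of the paper. One would iteratively split $W$ along the amalgamated products visible in the Coxeter diagram of $S$, tracking how $R$ sits inside each piece. Any obstruction to conjugating $R$ to $S$ inside a piece must eventually produce a visible amalgamation of $W$ of the form $W \cong W_A \ast_{W_{J \cup J^\bot}} W_B$ with $J$ irreducible spherical and $A,B$ non-adjacent, which is precisely an elementary twist of $S$. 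Executing this hierarchical bookkeeping while simultaneously preserving the explicit generator-matching from Step~3 is, I anticipate, where the real technical weight of the proof will lie.
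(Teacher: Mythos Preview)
Your proposal has a genuine gap, and the approach diverges substantially from the paper's.

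The critical failure is in your Step~3. You write: ``After further conjugating $R$ by an element of $Z_W(s)$, we may assume the whole $S$-star of $s$ lies in $R$.'' This is essentially the content of the theorem, not a lemma you can invoke. Angle-compatibility tells you that each spherical pair $\{s,s'\}\subset S$ is conjugate to \emph{some} pair in $R$, but the conjugating elements depend on $s'$; arranging for all $S$-neighbours of $s$ to land in $R$ \emph{simultaneously} is exactly the global coherence problem one must solve. Nothing in your sketch explains why a single element of $Z_W(s)$ achieves this, and in fact without twist-rigidity it can fail. So the induction never gets started: the base step already presupposes what must be proved.

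You also correctly identify that twist-rigidity is not hereditary, which wrecks the naive induction on $|S|$, and you gesture toward a ``hierarchical descent'' to fix it. But no mechanism is offered, and the paper's use of hierarchies is of a quite different nature: it is not an inductive decomposition of $W$ along amalgamated products.

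What the paper actually does is geometric rather than inductive. It works in the Davis complex $\mathbb{A}_{\mathrm{amb}}$ associated to $R$ and invokes the criterion of H\'ee and Howlett--Rowley--Taylor: $S$ is conjugate to $R$ if and only if one can choose, for each $s\in S$, a half-space $\Phi_s$ bounded by the wall $\mathcal{W}_s$ so that every pair $\{\Phi_s,\Phi_{s'}\}$ is geometric. The problem thus becomes one of making coherent half-space choices. To do this the paper introduces \emph{markings}: combinatorial data attached to $s$ that pick out walls parallel to $\mathcal{W}_s$ (or, via the $2$-spherical theorem, canonical half-spaces). Each marking determines a candidate $\Phi_s$; the heart of the proof (Theorem~\ref{well defined}) is that all \emph{good} markings with core $s$ yield the same half-space. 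This is established by connecting any two good markings by a sequence of elementary \emph{moves}, each preserving the half-space. The Masur--Minsky-style hierarchy enters here, not to decompose $W$, but to organise paths in the Coxeter diagram of $(W,S)$ that realise these moves; twist-rigidity is used precisely to guarantee such paths exist (Lemmas~\ref{hierachies exist} and~\ref{avoiding shadow}). A final local computation (Proposition~\ref{consistence}) checks that the resulting $\Phi_s$ are pairwise geometric.

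In short: the paper never inducts on $|S|$, never cuts $W$ along parabolics, and never attempts to place the $S$-star of a vertex inside $R$ directly. The conjugacy is obtained globally, in one stroke, once the half-space system is built.
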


By the Remark above, Theorem~\ref{main} has the
following immediate consequence.

\begin{cor}\label{cor:Out}
If a Coxeter group $W$ is twist-rigid, then $\Out(W)$ is finite.
\end{cor}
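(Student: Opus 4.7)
The plan is to follow the indication given in the paper and derive Corollary~\ref{cor:Out} by combining Theorem~\ref{main} with the Remark above, which supplies a finite index subgroup of $\Aut(W)$ whose elements send every Coxeter generating set to an angle-compatible one.

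Fix a twist-rigid Coxeter generating set $S$ for $W$, and let $A\leq \Aut(W)$ be the finite index subgroup provided by the Remark. For any $\alpha \in A$ the image $\alpha(S)$ is a Coxeter generating set angle-compatible with $S$; since $S$ is twist-rigid, Theorem~\ref{main} produces an element $w_\alpha \in W$ such that $\alpha(S) = w_\alpha S w_\alpha\inv$. I would then form $\beta_\alpha \colon x \mapsto w_\alpha\inv \alpha(x) w_\alpha$, an automorphism of $W$ which preserves $S$ setwise.

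Next I would observe that the subgroup $H \leq \Aut(W)$ consisting of automorphisms preserving $S$ setwise is finite: since $S$ generates $W$, any such automorphism is completely determined by its restriction to the finite set $S$, so $H$ embeds into the symmetric group on $S$. Writing $\alpha$ as the composition of conjugation by $w_\alpha$ with $\beta_\alpha \in H$ shows that the image of $A$ under the canonical surjection $\Aut(W) \to \Out(W)$ is contained in the image of $H$, and is in particular finite. Since $A$ has finite index in $\Aut(W)$, its image has finite index in $\Out(W)$, and I would conclude that $\Out(W)$ itself is finite.

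There is no serious obstacle: the argument is a straightforward packaging of Theorem~\ref{main} together with the Remark. One only needs to keep track of the fact that finite index passes to the image under the quotient $\Aut(W) \to \Out(W)$, which is standard.
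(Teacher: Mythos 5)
Your proposal is correct and is precisely the packaging the paper intends when it calls the corollary an "immediate consequence" of Theorem~\ref{main} and the Remark: modify each $\alpha$ in the finite-index subgroup by an inner automorphism so that it stabilises $S$ setwise, note that such automorphisms form a finite group, and pass to $\Out(W)$. No gaps.
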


Using the main results of \cite{HowlettMuhlherr} and
\cite{MarquisMuhlherr} we also obtain the following, which we prove
in Appendix~\ref{App2}.

\begin{cor}
\label{corollaries to main theorem} Let $W$ be a twist-rigid Coxeter
group.
\begin{enumerate}[(i)]
\item
All Coxeter generating sets for $W$ are twist-rigid.
\item
There is an algorithm which, given a Coxeter matrix (associated with
a Coxeter generating set) for $W$, produces as an output
representatives of all conjugacy classes of Coxeter generating sets
for $W$, in terms of words in the original generators.
\item
In particular, this algorithm produces as an output all possible
Coxeter matrices for $W$. Hence the isomorphism problem is decidable
in the class of twist-rigid Coxeter groups.
\end{enumerate}
\end{cor}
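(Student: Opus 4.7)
My approach is to combine Theorem~\ref{main} with the constructive reduction results of Howlett--M\"uhlherr and Marquis--M\"uhlherr, which I will abbreviate (HM-MM). These results supply, given any Coxeter generating set $R$ of $W$ together with a fixed reference set $S$, an explicit algorithmic procedure producing a Coxeter generating set $R^{*}$ angle-compatible with $S$, with the new generators written as words in the old ones.

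For part (i), let $R$ be any Coxeter generating set for $W$. Applying (HM-MM) relative to $S$, I obtain $R^{*}$ angle-compatible with $S$. Theorem~\ref{main} then gives that $R^{*}$ is conjugate to $S$; in particular its Coxeter diagram coincides with that of $S$ and admits no elementary twist. To transfer twist-rigidity back to $R$, I would argue by contradiction: if $R$ admitted an elementary twist $\tau$, then $R$ and $\tau(R)$ would be angle-compatible with each other but non-conjugate; running (HM-MM) on both and invoking Theorem~\ref{main} would yield two sets both conjugate to $S$, hence to each other. Reconciling this with the assumed non-conjugacy of $R$ and $\tau(R)$ --- equivalently, checking that (HM-MM) descends to a well-defined map on conjugacy classes of generating sets --- is the core technical point and the main obstacle, requiring a careful inspection of the explicit form of the (HM-MM) operations and their interaction with the splitting structure of $W$ over finite parabolic subgroups that underlies an elementary twist.

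For part (ii), granted (i), every Coxeter generating set for $W$ is twist-rigid, so by Theorem~\ref{main} the conjugacy classes of Coxeter generating sets of $W$ are in bijection with their angle-compatibility classes relative to $S$. The entries $m_{r,r'}$ of any Coxeter matrix of $W$ are bounded: each spherical pair corresponds to a finite dihedral subgroup of $W$, and $W$ has only finitely many conjugacy classes of finite subgroups, which are moreover computable from the Coxeter matrix of $S$. The algorithm then iterates over all candidate matrices $M$ consistent with these bounds, invokes (HM-MM) to search for a Coxeter generating set with matrix $M$ angle-compatible with $S$, and outputs the resulting generators as words in $S$ whenever the search succeeds. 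Theorem~\ref{main} guarantees at most one conjugacy class per candidate.

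For part (iii), the list of Coxeter matrices produced by the algorithm of (ii) is precisely the list of all Coxeter matrices for $W$, so given a Coxeter matrix for another Coxeter group $W'$ one decides isomorphism by checking membership in this finite list. The main difficulty throughout is concentrated in (i); once established, (ii) and (iii) are essentially bookkeeping on top of Theorem~\ref{main} and (HM-MM).
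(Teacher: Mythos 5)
Your overall architecture for (i)--(iii) (reduce an arbitrary generating set via the Howlett--M\"uhlherr and Marquis--M\"uhlherr operations to one angle-compatible with a reference set and then invoke Theorem~\ref{main}) matches the paper's, but the step you yourself flag as ``the core technical point'' of (i) is a genuine gap, and the route you propose for closing it would not work. The (HM-MM) operations do not descend to conjugacy classes in any useful sense: an elementary reduction replaces a pseudo-transposition $\tau$ by the two generators $\tau'=\tau a\tau$ and $\rho=w_J$ and \emph{changes the Coxeter matrix}, so $R$ and $R^{*}$ are in general not conjugate, and the observation that the outputs for $R$ and for $\tau(R)$ are both conjugate to $S$ produces no contradiction with the non-conjugacy of $R$ and $\tau(R)$. (That non-conjugacy is itself only asserted, not proved, in the paper's introduction.) What is actually needed, and what the paper's appendix supplies, is different: (a) a direct combinatorial proof that an elementary reduction $S\mapsto S'$ preserves twist-rigidity \emph{in both directions} (Lemma~\ref{lem:reduction:twist-rigid}, a case analysis on how an irreducible spherical $L\subset S'$ sits relative to the set $J$ witnessing the pseudo-transposition), and (b) the fact that when the source is twist-rigid every $J$-deformation extends to an automorphism of $W$ (Lemma~\ref{lem:AngleDeformation}, which uses twist-rigidity to force $J^\infty\subset\{r,t\}^\perp$ in the one problematic deformation of \cite[Section~7.6]{MarquisMuhlherr}). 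With these, one reduces the twist-rigid $R$ to a reduced twist-rigid $R'$, transforms an arbitrary $S$ by reductions, an automorphism from $\Sigma$, and $J$-deformations into $S''$ angle-compatible with $R'$, applies Theorem~\ref{main} to get $S''$ conjugate to $R'$, and transports twist-rigidity back to $S$ along this chain; no appeal to twists of $R$ is made.

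There is also an error in your part (ii): the claim that Theorem~\ref{main} ``guarantees at most one conjugacy class per candidate matrix'' is false. Angle-compatibility is strictly stronger than equality of Coxeter matrices (it requires the spherical \emph{pairs} to be conjugate, not merely to generate isomorphic dihedral groups), and the paper explicitly notes that a twist-rigid Coxeter group may admit several conjugacy classes of Coxeter generating sets, even with distinct Coxeter matrices. The correct enumeration, as in the paper, does not iterate over candidate matrices at all: it lists the images of $R'$ under the finitely many elements of $\Sigma$, a bounded number of $J$-deformations, and inverses of elementary reductions, which directly yields representatives of all conjugacy classes as words in the original generators; (iii) then follows as you say.
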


Note that a twist-rigid Coxeter group may admit more than one
conjugacy class of Coxeter generating sets (and even two Coxeter
generating sets with different Coxeter matrices). In fact, the
combination of Theorem~\ref{main} with the main results of
\cite{HowlettMuhlherr} and \cite{MarquisMuhlherr} leads to a precise
description of those Coxeter groups which admit a unique conjugacy
class of Coxeter generating sets. However, we do not formulate this
condition explicitly, since it is technical and not particularly
illuminating. Similarly, we could extract from the same combination
of results a precise description of those twist-rigid Coxeter groups
whose Coxeter generating sets admit only one Coxeter matrix.

Theorem~\ref{main} has been previously proved under various special
assumptions, see \cite{MuhlherrSurvey} for a 2006 state of art (but
note that the announcement of \cite[Theorem 3.7]{MuhlherrSurvey} was
too optimistic at the time). Since then, the only contributions
known to us are \cite{Car} and \cite{RS}.

\subsection*{Outline of the proof strategy}

We now sketch the overall strategy governing the proof of
Theorem~\ref{main}. Our approach is inspired by \cite{MW}. Let $S$
and $R$ be two angle-compatible Coxeter generating sets for $W$. We
call the Davis complex associated with $S$ the \textbf{reference}
Davis complex, and we denote it by $\mathbb A_{\mathrm{ref}}$. The
Davis complex associated with $R$ is called the \textbf{ambient}
Davis complex, and is denoted by $\mathbb A_{\mathrm{amb}}$.

Since $S$ and $R$ are reflection-compatible, the set of
$S$-reflections coincides with the set of $R$-reflections and its
elements are called simply reflections. We denote by $\mathcal{Y}_r$
the wall in $\mathbb A_{\mathrm{ref}}$ fixed by a reflection $r$,
and by $\mathcal{W}_r$ the wall in $\mathbb A_{\mathrm{amb}}$ fixed
by $r$. Since two walls intersect non-trivially if and only if the
associated reflections generate a finite group, it follows that the
assignment $\mathcal Y_{r} \mapsto \mathcal W_{r}$ preserves the
parallelism relation.

The Coxeter generating sets $S$ and $R$ are conjugate if and only if
the set $\{\mathcal W_s\}_{s \in S}$ is \textbf{geometric} in the
sense that it consists of walls containing the panels of some given
chamber of $\mathbb A_{\mathrm{amb}}$. In order to show that
$\{\mathcal W_s\}_{s \in S}$ is geometric, it is enough to construct
a set $\{\Phi_s\}_{s\in S}$ of half-spaces in $\mathbb
A_{\mathrm{amb}}$ satisfying the following. First we require that
the boundary wall of each $\Phi_s$ equals $\mathcal W_{s}$ (shortly,
$\Phi_s$ is a half-space \textbf{for} $s$). Second, we require that
for all $s,s'\in S$ the pair $\{\Phi_s, \Phi_{s'} \}$ is
\textbf{geometric}, \emph{i.e.}\ the set $\Phi_s \cap \Phi_{s'}$ is
a fundamental domain for the $\la s, s'\ra$-action on $\mathbb
A_{\mathrm{amb}}$. The fact that these two conditions imply that
$\{\mathcal W_s\}_{s \in S}$ is geometric is originally due to J.-Y.
H\'ee \cite{Hee} and was established independently by
Howlett--Rowley--Taylor~\cite[Theorem 1.2]{HRT} (see also \cite[Fact
(1.6)]{CM} for yet another proof as well as some additional
references).

In view of the above discussion, proving Theorem~\ref{main} boils down to establishing the following.

\begin{thm}\label{reduced main}
Let $S$ and $R$ be angle-compatible Coxeter generating sets for a group $W$. Assume that $S$ is twist-rigid. Then there exists a
set of half-spaces $\{\Phi_s\}_{s \in S}$ in $\mathbb
A_{\mathrm{amb}}$ such that $\Phi_s$ is a half-space for $s$ and
$\{\Phi_s, \Phi_{s'} \}$ is geometric for all $s, s' \in S$
\end{thm}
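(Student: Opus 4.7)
The plan is to proceed by induction on $|S|$, using visual (graph-of-groups) splittings of $W$ to reduce to smaller Coxeter systems and invoking twist-rigidity to prevent the splittings from being pathological. The key observation is that an elementary twist is precisely a visual splitting of $W$ over $W_{J \cup J^\bot}$ with $J$ irreducible spherical; twist-rigidity of $S$ forbids exactly these splittings, while still allowing splittings over infinite parabolic subgroups.

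First I would dispose of the reducible case: if $S = T \sqcup T^\bot$ for some proper non-empty $T \subsetneq S$, then $W = W_T \times W_{T^\bot}$ and the Davis complex factors as a product, so after an appropriate reshaping of $R$ one handles the two factors separately. Assuming $S$ is irreducible, I would then look for a visual splitting $W = W_{S_1} *_{W_{S_1 \cap S_2}} W_{S_2}$ with $S_1, S_2 \subsetneq S$ and $S = S_1 \cup S_2$. Twist-rigidity rules out $S_1 \cap S_2 = J \cup J^\bot$ with $J$ irreducible spherical; so any visual splitting that does exist is of a more benign form and can be used inductively. Specifically, one applies the inductive hypothesis to $S_1$ and $S_2$, after verifying that angle-compatibility and twist-rigidity pass to the vertex subgroups with suitably conjugated subsets of $R$. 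The resulting half-spaces in the smaller ambient Davis complexes then lift to $\mathbb A_{\mathrm{amb}}$ via the Bass--Serre tree of the splitting, and pairwise geometric compatibility across the two sides is automatic because $\mathcal W_s$ and $\mathcal W_{s'}$ for $s \in S_1 \setminus S_2$, $s' \in S_2 \setminus S_1$ are separated by the wall-tree.

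The main obstacle, and the technical heart of the proof, is the base case in which $S$ is irreducible, twist-rigid, and admits no non-trivial visual splitting. Here one must build the half-spaces directly, using the geometry of both Davis complexes and the wall-bijection $\mathcal Y_r \mapsto \mathcal W_r$. Angle-compatibility produces, for each spherical pair $\{s, s'\}\subset S$, a conjugate pair in $R$ and hence a canonical local choice of chamber in $\mathbb A_{\mathrm{amb}}$, yielding candidate half-spaces; the geometric condition for spherical pairs is thereby satisfied automatically, while for non-spherical pairs $\{s, s'\}$ the walls $\mathcal W_s, \mathcal W_{s'}$ are disjoint and the geometric condition reduces to $\Phi_s \supset \mathcal W_{s'}$ and vice versa. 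The hard part is globalising these local choices into a coherent $|S|$-tuple of half-spaces; one expects to argue that under the strong rigidity assumption the combinatorics of the Coxeter diagram, together with the absence of elementary twists, rules out any obstruction, perhaps via a direct analysis of how walls cross and separate chambers in the two Davis complexes simultaneously.
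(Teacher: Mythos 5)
There is a genuine gap, and it sits exactly where you locate ``the technical heart of the proof'': your base case is the entire theorem, and you give no argument for it. The paper does not argue by induction on visual splittings at all. Instead it fixes, for each $s\in S$, a \emph{good marking} $\mu$ with core $s$ (Lemma~\ref{good markings exist}), defines $\Phi_s=\Phi_s^\mu$, and then proves two hard statements: Theorem~\ref{well defined} (the choice is independent of $\mu$), established by connecting any two good markings through a chain of moves M1--M4 organised by a Masur--Minsky-style hierarchy and its resolution into slices (Sections~\ref{section moves}--\ref{section 2-spherical complete markings}); and Proposition~\ref{consistence} (pairwise geometricity), which needs a careful case analysis and Lemma~\ref{fusion} even for adjacent non-commuting $s,s'$. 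Your remark that for non-spherical pairs ``the geometric condition reduces to $\Phi_s\supset\mathcal W_{s'}$ and vice versa'' is correct but is precisely the coherence problem: the wall $\mathcal W_{s'}$ must land on the correct side of $\mathcal W_s$ for \emph{every} such $s'$ simultaneously, and ``one expects to argue that \ldots{} rules out any obstruction'' is not a proof. Twist-rigidity enters only twice in the paper (existence of hierarchies, Lemma~\ref{hierachies exist}, and Lemma~\ref{avoiding shadow}), and in both places through concrete connectivity arguments in the Coxeter diagram, not through an absence of splittings of $W$.

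The inductive reduction itself is also not justified. Twist-rigidity forbids only weak separation by an \emph{irreducible spherical} $J$, i.e.\ visual splittings over $W_{J\cup J^\bot}$ of that special form; it does not forbid visual splittings over other (e.g.\ non-spherical, or reducible spherical) parabolics, and you would need to verify that for such a splitting $W=W_{S_1}*_{W_{S_1\cap S_2}}W_{S_2}$ the vertex systems $S_1,S_2$ remain twist-rigid and that $R$ induces on $W_{S_i}$ an angle-compatible Coxeter generating set --- neither is automatic (note that $W_{S_i}$ need not even be $R$-parabolic). Finally, the claimed ``automatic'' compatibility across the splitting for $s\in S_1\setminus S_2$, $s'\in S_2\setminus S_1$ fails for the same reason as above: disjointness of $\mathcal W_s$ and $\mathcal W_{s'}$ does not by itself determine which of the two half-spaces for $s$ contains $\mathcal W_{s'}$ consistently with all the other constraints.
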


For example, if $s$ and $t$ are two elements of $S$ which generate
an infinite dihedral group, we need to define $\Phi_s$ as the unique
half-space for $s$ containing the wall $\mathcal W_{t}$, which we
denote by $\Phi(\mathcal{W}_s, \mathcal{W}_t)$. In particular, if
$t' \in S$ is another element with the same property, we need to
verify that $\mathcal W_{t}$ and $\mathcal W_{t'}$ lie on the same
side of $\mathcal W_{s}$. In order to address this compatibility
issue, we shall view the walls $\mathcal W_{t}$ and $\mathcal
W_{t'}$ as part of a larger family of walls parallel to $\mathcal
W_{s}$. This family is parametrised by a certain set of data which
we call \emph{markings}.

More precisely, a marking $\mu$ with \emph{core} $s \in S$ is a pair
$\mu = ((s,w),m)$, where $w\in W,\ m\in S$, which satisfies a number
of conditions depending on the combinatorics and the geometry of
$\mathbb A_{\mathrm{ref}}$.

We will consider two particular types of markings. One type will be
\emph{complete markings}, which will give rise to walls parallel to
$\mathcal W_{s}$, like in the example above. Namely, we require (in
particular) that the wall $\mathcal{Y}^\mu=w\mathcal{Y}_m$ is
parallel to $\mathcal{Y}_s$, so that
$\mathcal{W}^\mu=w\mathcal{W}_m$ is parallel to $\mathcal{W}_s$.
Hence every complete marking $\mu= ((s,w), m)$ with core $s \in S$
induces a choice of half-space $\Phi^\mu_s =\Phi(\mathcal {W}_s,
\mathcal{W}^\mu)$ in $\mathbb A_{\mathrm{amb}}$.

We will also take advantage of the fact that Theorem~\ref{reduced
main} has been already proved for $2$-spherical Coxeter groups
\cite{CM}. (Recall that $J\subset S$ is called
\textbf{$2$-spherical} if all of its two element subsets are
spherical and a Coxeter group is \textbf{$2$-spherical} if it admits
a Coxeter generating set $S$ which is $2$-spherical.) In particular,
each $2$-spherical but non-spherical subset $J\subset S$ containing
$s$ gives rise to a natural choice of a half-space for $s$. Consider
a marking $\mu=((s,w),m)$ and let $j_1\ldots j_n$ be a word of
minimal length representing $w$. Even if $\mathcal{W}^\mu=j_1\ldots
j_n \mathcal{W}_m$ intersects $\mathcal{W}_s$, once the
\emph{support} $J=\{s, j_1,\ldots, j_n\}\subset S$ is irreducible
$2$-spherical but non-spherical, there is a natural choice of a
half-space $\Phi^\mu_s$ for $s$ (see Corollary~\ref{2-sph
corollary}). A marking $\mu$ of this type is called
\emph{semi-complete}.

We next introduce a relation on the set of all complete and
semi-complete markings with common core, which we call a
\emph{move}. We show that two markings $\mu, \mu'$ related by a move
induce the same choice of a half-space, namely we show $\Phi^\mu_s =
\Phi^{\mu'}_s$. In the special case of complete markings $\mu, \mu'$
the reason for this is that $\mathcal{W}^\mu$ and
$\mathcal{W}^{\mu'}$ intersect.

The markings discussed at the beginning, where
$\mathcal{W}^\mu=\mathcal{W}_t$, for some $t\in S$ (\emph{i.e.}\ markings for which $w=1$) are part of a class of particularly well
behaved \emph{good markings}. They have, in particular, the property
that $w$ is uniquely determined by
the support $J$.

The major part of the proof is then to show that any two good
markings with common core are related to one another by a sequence
of moves. The moves can be essentially read from the Coxeter diagram
of $(W, S)$, and this allows us to appeal to graph-theoretic
arguments. Since $S$ is assumed to be twist-rigid, there is `enough
space' to move around the diagram of $(W, S)$. We use the formalism
of Masur--Minsky hierarchies to assemble all the data.

In the present context, a \emph{hierarchy} is a system of geodesics
in the Coxeter diagram of $(W, S)$ between a pair of good markings.
It is constructed in a seemingly arbitrary way, but later reveals
highly organised structure. It admits a resolution into a sequence
of \emph{slices} which give rise to good markings related by moves.

However, a new type of moves pops up out of this procedure; to
handle it we need to leave the setting of good markings and consider
complete markings in full generality.

\medskip

We stress that the assumption that $S$ is twist-rigid is effectively
used only in two places in the proof. We use it to prove the
existence of a hierarchy with a given main geodesic
(Lemma~\ref{hierachies exist}) and in a similar situation in
Section~\ref{section 2-spherical complete markings}
(Lemma~\ref{avoiding shadow}).

Furthermore, the hypothesis that $S$ and $R$ are angle-compatible
(and not just reflection-compatible) is used only in the proof of
Proposition~\ref{consistence} and in the case where $S=S'\cup
S'^\bot$ for some spherical $S'$.

\subsection*{Organisation of the article}
In Section~\ref{Coxeter} we recall some basic facts on Coxeter
groups and Davis complexes. In Section~\ref{half-space choices} we
define \emph{complete, semi-complete} and \emph{good markings} and
describe how they determine choices of half-spaces. In
Theorem~\ref{well defined} we claim that this choice does not depend
on the marking $\mu$, provided that $\mu$ is good. We next prove
Theorem~\ref{well defined} in Section~\ref{section moves}, by means
of \emph{moves}. However, we leave two crucial graph-theoretic
results, namely Theorem~\ref{connecting markings} and
Proposition~\ref{connecting 2}, for
Sections~\ref{hierarchies}--\ref{section 2-spherical complete
markings}. We prove Theorem~\ref{reduced main} (the main result) in
Section~\ref{choice of half-spaces}.

In Section~\ref{hierarchies} we describe how to connect a pair of
good markings by a \emph{hierarchy}. Then, in Section~\ref{slices},
we show how to resolve a hierarchy into a sequence of \emph{slices},
which gives rise to a sequence of good markings related by moves. In
Section~\ref{section 2-spherical complete markings} we finally deal
with the last type of moves.

We include Appendix~\ref{App1}, where we explain why the relations
of reflection- and angle-compatibility are symmetric. In Appendix
\ref{App2} we provide a proof of Corollary~\ref{corollaries to main
theorem}.

\subsection*{Acknowledgements}
The writing of this paper was initiated during the stay of both authors at the
Institut des Hautes \'Etudes Scientifiques, and continued
at the Hausdorff Research Institute for Mathematics. We are grateful for the hospitality of both institutes.

\section{Preliminaries on Coxeter groups}
\label{Coxeter}

In this section we collect some basic facts on Coxeter groups. Let
$W$ be a group with a Coxeter generating set $S$ and let $\mathbb A$
be the associated Davis complex.

The gallery distance between chambers $c,c'$ of the Davis complex
$\mathbb A$ is denoted by $d(c,c')$.
By the \textbf{distance} of a chamber $c$ to a wall $\mathcal Y$, we
mean the minimal gallery distance from $c$ to a chamber containing a
panel contained in $\mathcal Y$; it is denoted by $d(c, \mathcal
Y)$.

If $w\in W$ and $c$ is chamber of $\mathbb A$, we denote by $w.c$
the image of $c$ under the action of $w$. For $w\in W$ let $\ell(w)$
denote the word-length of $w$; in other words, $\ell(w)=d(w.c_0,
c_0)$, where $c_0$ is the identity chamber of $\mathbb A$.

\begin{lemma}[{\cite[Chapitre IV, Exercice 22]{Bou}}]
\label{bourbaki} Let $w\in W$. The set of $s\in S$ satisfying
$\ell(ws)<\ell(w)$ is spherical.
\end{lemma}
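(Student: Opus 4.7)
The plan is to show that the set $J := \{s \in S : \ell(ws) < \ell(w)\}$ generates a subgroup $W_J$ that admits a longest element, which is a standard criterion forcing $W_J$ to be finite. The strategy is to decompose $w$ relative to the coset $wW_J$ via minimal-length representatives, and then translate the descent property of $w$ into a descent property for the $W_J$-factor.

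Concretely, I would first invoke the classical theory of minimal coset representatives for $W/W_J$: every coset admits a unique element of minimal length, and if $u$ denotes this representative of $wW_J$, then writing $w = uv_0$ with $v_0 \in W_J$, one has the length-additivity
$$\ell(uv') = \ell(u) + \ell(v')$$
for every $v' \in W_J$. Fixing $s \in J$ and applying this identity to both $v' = v_0$ and $v' = v_0 s$ yields
$$\ell(u) + \ell(v_0 s) = \ell(u v_0 s) = \ell(ws) < \ell(w) = \ell(u) + \ell(v_0),$$
so $\ell(v_0 s) < \ell(v_0)$ for every $s \in J$. Since the restriction of $\ell$ to $W_J$ coincides with the length function of the Coxeter system $(W_J, J)$, this precisely says that $v_0 \in W_J$ has full right descent set $J$.

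To conclude, I would appeal to the classical fact that in a Coxeter system an element all of whose right multiplications by a standard generator strictly decrease the length is necessarily the unique longest element, and that such an element exists if and only if the group is finite. Both facts follow immediately from the Exchange Condition. Applied to $v_0 \in W_J$, they force $W_J$ to be finite, which is exactly the assertion that $J$ is spherical.

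The main work, if one unpacked it, lies in establishing the existence and length-additivity property of minimal coset representatives, which is itself a standard application of the Exchange Condition (and is a companion exercise in the same chapter of \cite{Bou}). The statement itself is not the real obstacle; it is a direct combinatorial consequence of that black box, and this explains why it appears as an exercise rather than as a theorem in Bourbaki.
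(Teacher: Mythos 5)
Your argument is correct, and it is the standard proof of this fact; the paper itself gives no proof, simply citing it to Bourbaki (Ch.~IV, Exercice 22), and your route through minimal coset representatives, the full right descent set of the $W_J$-component, and the longest-element criterion for finiteness is exactly the intended one.
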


For $s\in S$, we denote by $\alpha_s$ the positive root in the Tits
representation corresponding to $s$ (see \emph{e.g.}\ \cite[Section
1]{BH}).

\begin{lemma}[{\cite[Lemma 1.7]{BH}}]
\label{brink-howlett} Let $\mathcal{Y}$ be a wall with associated
positive root $\alpha$ and let $s\in S$. Then we have
\begin{equation*}
\text{d}(s.c_0,\mathcal{Y})=
\begin{cases} d(c_0,\mathcal{Y}) +1& \text{if } \langle \alpha, \alpha_s \rangle >0,
\\
\text{d}(c_0,\mathcal{Y}) &\text{if } \langle \alpha, \alpha_s
\rangle =0,
\\
\text{d}(c_0,\mathcal{Y})-1 &\text{if } \langle \alpha, \alpha_s
\rangle <0.
\end{cases}
\end{equation*}
\end{lemma}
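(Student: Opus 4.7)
Plan. The approach is to convert the claim about chamber-to-wall distances into a claim about depths of positive roots, and then exploit the Tits action of $s$ on the root system.

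Since $s$ acts as an isometry on $\mathbb{A}$, we have $d(s.c_0, \mathcal{Y}) = d(c_0, s.\mathcal{Y})$, and the wall $s.\mathcal{Y}$ is associated to the positive element of $\{s(\alpha), -s(\alpha)\}$, where $s(\alpha) = \alpha - 2\langle \alpha, \alpha_s \rangle \alpha_s$ is the reflection formula in the Tits representation. The case $\langle \alpha, \alpha_s \rangle = 0$ is then immediate: $s(\alpha) = \alpha$ gives $s.\mathcal{Y} = \mathcal{Y}$ and the two distances are literally equal. Moreover, prepending the single step between $c_0$ and $s.c_0$ to a minimal gallery from $c_0$ to $\mathcal{Y}$ (and symmetrically) yields the crude bound $|d(s.c_0, \mathcal{Y}) - d(c_0, \mathcal{Y})| \leq 1$, so the three cases in the formula are exhaustive and what remains is to identify the correct sign of the difference when $\langle \alpha, \alpha_s \rangle \neq 0$.

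To pin down the sign, I would work with the depth of a positive root $\beta$, namely $\mathrm{dp}(\beta) := 1 + d(c_0, \mathcal{Y}_\beta)$, which is classically identified with $\min\{\ell(w) : w(\beta) \in \Phi^-\}$. Pick a minimal $w$ realizing $\mathrm{dp}(\alpha)$; then the identity $(ws)(s\alpha) = w(\alpha) \in \Phi^-$ together with $\ell(ws) = \ell(w) \pm 1$ yields $\mathrm{dp}(s\alpha) \leq \mathrm{dp}(\alpha) + 1$, and a symmetric argument with the roles of $\alpha$ and $s\alpha$ exchanged delivers $\mathrm{dp}(\alpha) \leq \mathrm{dp}(s\alpha) + 1$. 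Which of the two strict inequalities actually occurs is controlled by whether $s$ is a right descent of some minimal $w$, i.e., whether $\ell(ws) = \ell(w) - 1$; by the standard descent criterion this is equivalent to $w(\alpha_s) \in \Phi^-$, and the matching with $\mathrm{sgn}\langle \alpha, \alpha_s \rangle$ proceeds via the $W$-invariance of the Tits form $\langle w(\alpha), w(\alpha_s) \rangle = \langle \alpha, \alpha_s \rangle$.

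The main obstacle is this last sign-matching step: tying the algebraic sign of $\langle \alpha, \alpha_s \rangle$ to the geometric condition that $\mathcal{Y}_s$ separates $c_0$ from the nearest chamber of $\mathcal{Y}$ (equivalently, the existence of a minimal $w$ with $w(\alpha_s) \in \Phi^-$). I would handle it by induction on $\mathrm{dp}(\alpha)$: the base case $\alpha \in \{\alpha_t : t \in S\}$ reduces directly to inspecting the reflection formula, and the inductive step writes $\alpha = t(\alpha')$ for some $t \in S$ with $\mathrm{dp}(\alpha') < \mathrm{dp}(\alpha)$, invoking the hypothesis on $\alpha'$ together with the behavior of the Tits form under the reflection $t$.
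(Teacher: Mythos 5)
The paper does not actually prove this lemma: it is imported verbatim (up to notation) from Brink--Howlett \cite[Lemma 1.7]{BH}, and your plan --- translate $d(c_0,\mathcal Y_\beta)$ into the depth $\mathrm{dp}(\beta)=1+d(c_0,\mathcal Y_\beta)$ of the positive root and induct on depth --- is precisely the strategy of that source, so there is no alternative route to compare. The parts you carry out are correct: the case $\langle \alpha,\alpha_s\rangle=0$, the crude bound $|d(s.c_0,\mathcal Y)-d(c_0,\mathcal Y)|\le 1$, and the equivalence between $\mathrm{dp}(s\alpha)=\mathrm{dp}(\alpha)-1$ and the existence of a length-minimal $w$ with $w(\alpha)\in\Phi^-$ and $\ell(ws)<\ell(w)$.

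The gap is that the step you yourself call ``the main obstacle'' is where the entire content of the lemma sits, and the tools you name for it do not suffice. Invariance of the Tits form gives nothing on its own: for a minimal $w$ one has $\langle\alpha,\alpha_s\rangle=\langle w\alpha,w\alpha_s\rangle$, but knowing that $w\alpha$ and $w\alpha_s$ are negative roots does not determine the sign of their inner product. The indispensable input, nowhere mentioned in your sketch, is that $\langle\alpha_t,\alpha_{t'}\rangle=-\cos(\pi/m_{t,t'})\le 0$ for all distinct $t,t'\in S$: it is what makes the base case work (for $\alpha=\alpha_t$ with $t\ne s$, one of the two nonzero-sign cases is vacuous) and what controls the bookkeeping afterwards. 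The inductive step itself is also not a formality: writing $\alpha=t(\alpha')$ with $\mathrm{dp}(\alpha')=\mathrm{dp}(\alpha)-1$, the hypothesis applied to the pair $(\alpha',t)$ pins down the sign of $\langle\alpha,\alpha_t\rangle$, but to compare $\mathrm{dp}(s\alpha)$ with $\mathrm{dp}(\alpha)$ for $s\ne t$ one still needs a genuine case analysis involving the dihedral subgroup $\langle s,t\rangle$, because $s\alpha=st(\alpha')$ is not, in any directly usable way, a simple reflection applied to a root of smaller depth; this is the part that actually has to be written out. Finally, two things your base case would force you to confront: the statement needs the hypothesis $\alpha\ne\alpha_s$ (otherwise $d(s.c_0,\mathcal Y_s)=d(c_0,\mathcal Y_s)=0$ while $\langle\alpha_s,\alpha_s\rangle=1$), and with the sign convention just quoted --- the one used elsewhere in the paper, e.g.\ in Remark~\ref{extending base}(i) --- the $+1$ and $-1$ cases of the displayed formula come out interchanged, as the computation $d(s.c_0,\mathcal Y_t)=1$ for $m_{s,t}\ge 3$ already shows; any complete proof must commit to one convention and land on the version consistent with that check.
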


The following is a consequence of \cite[Proposition 5.5]{Deodhar}
(see also \cite[Proposition 3.1.9]{Kra}).

\begin{prop}
\label{deod main} If $I\subset S$ is irreducible and non-spherical,
then its centraliser in $W$ coincides with $W_{I^\bot}$.
\end{prop}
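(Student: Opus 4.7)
The inclusion $W_{I^\bot} \subseteq C_W(W_I)$ is immediate from the definition of $I^\bot$, since every element of $I^\bot$ commutes with every generator in $I$.

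For the converse, the plan is to invoke the structural description of the normaliser $N_W(W_I)$ due to Deodhar \cite[Proposition~5.5]{Deodhar} and refined by Krammer \cite[Proposition~3.1.9]{Kra}. The theorem provides a semi-direct decomposition $N_W(W_I) = W_I \rtimes N_I$, where the elements of $N_I$ are length-minimal coset representatives whose conjugation action on $W_I$ realises Coxeter diagram automorphisms of $(W_I, I)$; moreover, $W_{I^\bot}$ is exactly the kernel of this diagram-automorphism action. Since $C_W(W_I)\subseteq N_W(W_I)$, any $w\in C_W(W_I)$ decomposes as $w = w_1 \cdot n$ with $w_1 \in W_I$ and $n \in N_I$, and conjugation by $w$ on $W_I$ is the composition of the inner automorphism by $w_1$ with the diagram automorphism induced by $n$.

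By assumption this composition is the identity of $\Aut(W_I)$. Since $I$ is irreducible and non-spherical, $W_I$ is infinite irreducible and therefore has trivial centre (a classical fact, cf.\ Bourbaki, Chapitre V, \S 4), and moreover no non-trivial inner automorphism of $W_I$ can permute $I$ (the normaliser decomposition applied internally to $(W_I,I)$ forces $N_I \cap W_I = \{1\}$, so an inner realisation of a non-trivial diagram permutation of $I$ would have to come from the trivial element). Consequently, the diagram-automorphism component and the inner-automorphism component must both be trivial, yielding $n \in W_{I^\bot}$ and $w_1 \in Z(W_I) = 1$, so that $w = n \in W_{I^\bot}$.

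The main obstacle is to extract from the Deodhar--Krammer results exactly the semi-direct decomposition of $N_W(W_I)$ together with the identification of $W_{I^\bot}$ as the precise kernel of the diagram-automorphism action; once this structural input is in place, the triviality of the centre of an infinite irreducible Coxeter group and the above cancellation argument finish the proof.
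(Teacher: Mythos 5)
The paper does not actually prove this proposition: it is quoted verbatim as a consequence of Deodhar \cite[Proposition 5.5]{Deodhar} and Krammer \cite[Proposition 3.1.9]{Kra}. So the only question is whether your derivation from the normaliser structure theorem is sound, and there are two problems with it. First, your step identifying $W_{I^\bot}$ as ``exactly the kernel of the diagram-automorphism action'' of $N_I$ \emph{is} the hard direction of the proposition: an element $n\in N_I$ acting trivially on the simple roots $\Pi_I$ is precisely a centralising element of $W_I$ that is length-minimal in its coset $nW_I$, so asserting that such an $n$ lies in $W_{I^\bot}$ is equivalent to what you are trying to prove (and it is false without the hypotheses that $I$ is irreducible and non-spherical --- e.g.\ reflection centralisers are in general much larger than $W_{\{s\}}\times W_{\{s\}^\bot}$). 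You do flag this as ``the main obstacle'' and defer it to the citation, which is defensible since the paper does the same, but then the remaining scaffolding is carrying almost none of the load.

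Second, and this is a genuine gap in the part you do argue: the claim that no non-trivial inner automorphism of $W_I$ can permute $I$ is not justified by ``$N_I\cap W_I=\{1\}$''. That equality only excludes elements $v\in W_I$ with $v\alpha_i=\alpha_{\sigma(i)}$ for all $i$, i.e.\ sending each simple root of $I$ to a \emph{positive} simple root; an element realising a permutation $\sigma$ of $I$ by conjugation only satisfies $v\alpha_i=\pm\alpha_{\sigma(i)}$, and the signs need not all be $+$. Indeed, for $I$ finite irreducible the longest element $w_I$ is a non-trivial inner realisation of the diagram automorphism $-w_I$, so your claim genuinely requires non-sphericity, which your parenthetical never invokes. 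The fix: if $v\alpha_i=\epsilon_i\alpha_{\sigma(i)}$ and $J=\{i:\epsilon_i=-1\}$, then for any positive root $\alpha$ of $\Phi_I$ whose support meets both $J$ and $I\setminus J$ the image $v\alpha$ would have coefficients of both signs, which is impossible; since $I$ is irreducible this forces $J=\emptyset$ or $J=I$. The case $J=I$ would give $\ell(v)\geq|\Phi_I^+|=\infty$ because $I$ is non-spherical, and the case $J=\emptyset$ gives $v\in N_I\cap W_I=\{1\}$. With that repaired, your cancellation argument (trivial centre of an infinite irreducible Coxeter group, hence $w_1=1$ and $\sigma=\mathrm{id}$) goes through, and the conclusion reduces to the cited fact that the pointwise stabiliser of $\Pi_I$ is $W_{I^\bot}$.
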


We also need the following, known as the Parallel Wall Theorem.

\begin{thm}[{\cite[Theorem 2.8]{BH}}]
\label{parallel wall theorem main} For any $(W,S)$ there exists a
constant $n$ such that the following holds. For any wall
$\mathcal{Y}$ and a chamber $c$ at gallery distance at least $n$
from $\mathcal{Y}$, there is another wall separating $c$ from
$\mathcal{Y}$.
\end{thm}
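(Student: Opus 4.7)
The plan is to reduce the statement to Brink and Howlett's finiteness theorem for small roots. Recall that a positive root $\beta$ \emph{dominates} a positive root $\gamma$ if the positive half-space of $\beta$ is contained in that of $\gamma$, and $\beta$ is \emph{small} if it dominates no positive root other than itself. The key input I would use is the theorem of Brink and Howlett that the set $\Sigma$ of small roots of $(W,S)$ is finite; I set $N := |\Sigma|$ and take $n := N+1$.

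By $W$-equivariance of the gallery distance and the incidence relation between walls, it is enough to handle the case where $c$ is the identity chamber $c_0$. So I fix a wall $\mathcal{Y}$ with $d(c_0, \mathcal{Y}) \geq n$ and assume toward a contradiction that no wall other than $\mathcal{Y}$ separates $c_0$ from $\mathcal{Y}$. I choose a minimal gallery $c_0 = d_0, d_1, \ldots, d_m$ with $d_m$ containing a panel in $\mathcal{Y}$, where $m = d(c_0, \mathcal{Y}) \geq n$, and let $\mathcal{Y}_i$ be the wall crossed between $d_{i-1}$ and $d_i$ for $1 \leq i \leq n$. If some $\mathcal{Y}_i$ were disjoint from $\mathcal{Y}$, it would separate $c_0$ from $\mathcal{Y}$; hence every $\mathcal{Y}_i$ meets $\mathcal{Y}$, which means the reflections in $\mathcal{Y}_i$ and $\mathcal{Y}$ together generate a finite dihedral subgroup.

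The main step, which I expect to be the principal obstacle, is to associate to each $\mathcal{Y}_i$ a small root $\sigma_i \in \Sigma$ in such a way that $i \mapsto \sigma_i$ is injective. A natural candidate is to let $\sigma_i$ be a minimal positive root dominated by the positive root $\beta_i$ of $\mathcal{Y}_i$: such a minimum exists because $\beta - \gamma$ has non-negative coefficients in the simple roots whenever $\beta$ dominates $\gamma$, so the dominated positive roots sit in a bounded height interval and admit minimal elements; by minimality any such $\sigma_i$ lies in $\Sigma$. Injectivity of $i \mapsto \sigma_i$ should follow from a careful analysis of how dominance transforms under reflection in a wall transverse to $\mathcal{Y}_i$, exploiting that $\mathcal{Y}_1, \ldots, \mathcal{Y}_n$ are pairwise distinct walls crossed in succession by a minimal gallery running alongside $\mathcal{Y}$. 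Once injectivity is secured, I obtain $n \leq N$, contradicting $n = N+1$. Pinning down this injection cleanly is essentially the technical heart of the Brink--Howlett argument; as an alternative, one could appeal to their finite-state automaton for the language of reduced expressions in $W$, which directly encodes, for any fixed wall, the walls separating it from a given chamber.
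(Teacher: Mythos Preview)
The paper does not prove this statement; it is quoted from \cite[Theorem~2.8]{BH}. Your plan to deduce it from the finiteness of the set $\Sigma$ of small roots is the right one, but you have made the deduction harder than necessary and left the decisive step open. Unwinding the definitions, a positive root $\alpha$ strictly dominates a positive root $\gamma$ if and only if $\mathcal Y_\gamma$ separates $c_0$ from $\mathcal Y_\alpha$; hence $\alpha$ is small precisely when \emph{no} wall separates $c_0$ from $\mathcal Y_\alpha$. Your contradiction hypothesis therefore says exactly that the positive root of $\mathcal Y$ itself lies in $\Sigma$, and since $\Sigma$ is finite one may take $n = 1 + \max_{\sigma \in \Sigma} d(c_0, \mathcal Y_\sigma)$. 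There is no need for the intermediate walls $\mathcal Y_i$, the dominated small roots $\sigma_i$, or the injection $i \mapsto \sigma_i$.

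As for that injection: you yourself flag it as the principal obstacle and do not actually construct it. Nothing in your setup prevents two of the walls $\mathcal Y_i, \mathcal Y_j$ from being parallel to one another while both meeting $\mathcal Y$, in which case one of $\beta_i, \beta_j$ dominates the other and your ``minimal dominated root'' recipe may well assign them the same $\sigma$. Ruling this out would require essentially the full strength of the statement you are trying to prove, so the detour through the $\sigma_i$ is a genuine gap rather than a routine verification; fortunately it is also unnecessary in view of the direct argument above.
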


We finish this section with the following known fact; as explained
in the introduction, it is a basic ingredient of the proof of
Theorem~\ref{main}.

\begin{thm}[{\cite[Main Result (1.1)]{CM}}]
\label{2-spherical}
Let $S$ and $R$ be
reflection-compatible Coxeter generating sets for a group $W$. If $S$ is irreducible $2$-spherical and non-spherical, then $S$ and $R$ are conjugate.
\end{thm}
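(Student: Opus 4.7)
The plan is to apply the Hée / Howlett–Rowley–Taylor criterion recalled in the introduction: it suffices to build a family $\{\Phi_s\}_{s\in S}$ of half-spaces of $\mathbb{A}_{\mathrm{amb}}$ with $\partial\Phi_s=\mathcal{W}_s$ such that $\{\Phi_s,\Phi_{s'}\}$ is geometric for every pair $s,s'\in S$. Since $S$ is $2$-spherical, every such pair generates a finite dihedral group, so $\mathcal{W}_s\cap\mathcal{W}_{s'}\neq\emptyset$ in $\mathbb{A}_{\mathrm{amb}}$, and geometricity of $\{\Phi_s,\Phi_{s'}\}$ just means that $\Phi_s\cap\Phi_{s'}$ is one of the two opposite fundamental chambers for $\langle s,s'\rangle$ bounded by $\mathcal{W}_s$ and $\mathcal{W}_{s'}$.

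The approach would be to fix a single base chamber $c^*$ of $\mathbb{A}_{\mathrm{amb}}$ and set $\Phi_s$ to be the half-space bounded by $\mathcal{W}_s$ containing $c^*$. Pairwise geometricity of $\{\Phi_s,\Phi_{s'}\}$ then reduces to saying that $c^*$ lies in one of the two sectors ``between'' $\mathcal{W}_s$ and $\mathcal{W}_{s'}$ among the $2m_{s,s'}$ sectors of the rank-$2$ residue around $\mathcal{W}_s\cap\mathcal{W}_{s'}$. The first reduction is thus to a local rank-$3$ problem: for each triple $\{s,s',s''\}\subset S$ one verifies simultaneous pairwise geometricity inside the corresponding rank-$3$ residue of $\mathbb{A}_{\mathrm{amb}}$, which is a finite or affine Coxeter complex whose reflection set is prescribed by reflection-compatibility. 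This is a finite case analysis over the possible reflection-compatible Coxeter generating sets of rank-$3$ Coxeter groups.

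The main obstacle — and the place where irreducibility and non-sphericity are both needed — is to show that the local sector choices assemble consistently around every loop of the Coxeter diagram of $S$. Connectedness of the diagram (from irreducibility) lets one propagate the choice along a spanning tree, after which one must check loop-consistency. To rule out a discrepancy, I would invoke Proposition~\ref{deod main} together with the Parallel Wall Theorem (Theorem~\ref{parallel wall theorem main}): non-sphericity supplies walls of $\mathbb{A}_{\mathrm{amb}}$ arbitrarily far from $c^*$, and the root-geometric constraint of Lemma~\ref{brink-howlett} pins down the sector choice uniquely from any such anchor, so no incompatibility can occur. Combined with the centralizer rigidity of irreducible non-spherical parabolics (Proposition~\ref{deod main}), this closes the loop-consistency check and produces the desired family $\{\Phi_s\}_{s\in S}$, yielding by Hée/HRT a conjugation between $S$ and $R$.
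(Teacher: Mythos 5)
This statement is not proved in the paper at all: it is Theorem~\ref{2-spherical}, imported verbatim as Main Result (1.1) of \cite{CM}, whose proof occupies a separate research article. So there is no internal proof to compare yours against; the question is whether your sketch stands on its own, and it does not. The first concrete error is your description of the local model: for a triple $\{s,s',s''\}\subset S$ there is in general no ``rank-$3$ residue of $\mathbb{A}_{\mathrm{amb}}$'' containing the three walls $\mathcal{W}_s,\mathcal{W}_{s'},\mathcal{W}_{s''}$, because $\langle s,s',s''\rangle$ is merely a reflection subgroup with respect to $R$, not a parabolic subgroup; and even the Coxeter complex of $\langle s,s',s''\rangle$ itself is typically neither finite nor affine --- an irreducible $2$-spherical non-spherical rank-$3$ group is generically a hyperbolic triangle group such as the $(2,3,7)$ group. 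So the advertised ``finite case analysis'' has no finite list to run over, and reflection-compatibility does not reduce the local problem to spherical or Euclidean geometry.

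The deeper gap is that your global step is exactly the content of the theorem, asserted rather than proved. Choosing one chamber $c^*$ and requiring it to lie in the acute sector for every pair $\{s,s'\}$ is precisely the existence of the distinguished fundamental domain in Corollary~\ref{2-sph corollary}, which the paper \emph{deduces from} Theorem~\ref{2-spherical}; you give no construction of such a $c^*$ and no proof that one exists. The ``loop-consistency'' paragraph invokes Proposition~\ref{deod main} and Theorem~\ref{parallel wall theorem main} without specifying what quantity is transported around a loop of the diagram, what the obstruction class is, or how either result kills it; ``pins down the sector choice uniquely from any such anchor, so no incompatibility can occur'' is a restatement of the desired conclusion, not an argument. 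If you need this theorem, cite \cite{CM}; if you intend to reprove it, you must reproduce the substantial machinery of that paper (in particular its rank-$3$ analysis and its global assembling argument), which a two-step reduction does not supply.
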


By Theorem~\ref{small Coxeter groups} this yields the following.

\begin{cor}
\label{2-sph corollary} Let $R$ be a Coxeter generating set for $W$.
Let $S\subset W$ be a Coxeter generating set for a subgroup
$W_S\subset W$, consisting of $R$-reflections. If $S$ is irreducible
$2$-spherical and non-spherical, then there is a unique fundamental
domain for the $W_S$-action on the Davis complex of $W$ associated
with $R$, which is adjacent to all $s$-invariant walls, over $s\in
S$. In particular, if $\Phi_s$, for $s\in S$, denotes the half-space
for $s$ containing this fundamental domain, then for every
$\{s,s'\}\subset S$ the pair $\{\Phi_s,\Phi_{s'}\}$ is geometric.
\end{cor}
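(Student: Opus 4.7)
The plan is to deduce the corollary from Theorem~\ref{2-spherical} applied inside the reflection subgroup $W_S$. Invoking the cited Theorem~\ref{small Coxeter groups} on reflection subgroups, $W_S$ carries a canonical Coxeter generating set $R' \subset W_S$ consisting of $R$-reflections; the walls $\{\mathcal{W}_r\}_{r \in R'}$ in the Davis complex $\mathbb A$ of $(W,R)$ bound a fundamental domain $D$ for the $W_S$-action on $\mathbb A$. By hypothesis $S$ also consists of $R$-reflections, so $S$ and $R'$ are two reflection-compatible Coxeter generating sets of $W_S$.

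Since $S$ is assumed irreducible $2$-spherical and non-spherical, Theorem~\ref{2-spherical}, applied with $W_S$ in place of $W$, yields an element $w \in W_S$ with $wR'w\inv = S$. The translate $D_S := w.D$ is then a fundamental domain for the $W_S$-action on $\mathbb A$ whose bounding walls are precisely $\{\mathcal W_s\}_{s \in S}$, so $D_S$ is adjacent to all the $s$-invariant walls. Uniqueness follows because any fundamental domain adjacent to all the $\mathcal{W}_s$ would share its set of bounding walls with $D_S$, and a connected chamber region in a wall arrangement is determined by its bounding walls.

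For the geometric conclusion, fix a spherical pair $\{s,s'\} \subset S$ and let $\Phi_s, \Phi_{s'}$ be the half-spaces containing $D_S$. Since $\{s,s'\}$ is spherical, $\la s, s'\ra$ is a finite rank-$2$ standard parabolic subgroup of the Coxeter group $(W_S, S)$. Using the Coxeter chamber combinatorics of $W_S$, the $W_S$-translates of $D_S$ contained in $\Phi_s \cap \Phi_{s'}$ form precisely the $\la s, s'\ra$-orbit of $D_S$; equivalently, $\Phi_s \cap \Phi_{s'}$ is a fundamental domain for $\la s, s'\ra$ acting on $\mathbb A$, which is exactly the assertion that $\{\Phi_s, \Phi_{s'}\}$ is geometric.

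The main obstacle is extracting from Theorem~\ref{small Coxeter groups} the statement that the $R$-reflections in $W_S$ endow it with a Coxeter generating set $R'$ whose associated walls genuinely bound a fundamental chamber in $\mathbb A$, and that this Coxeter structure is reflection-compatible with $S$ as a structure on $W_S$. Once this is secured, the rest is a clean combination of Theorem~\ref{2-spherical} with standard Coxeter chamber combinatorics applied inside $W_S$.
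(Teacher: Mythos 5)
Your proposal follows exactly the paper's intended derivation: the authors give no written proof beyond the phrase ``By Theorem~\ref{small Coxeter groups} this yields the following'' placed immediately after Theorem~\ref{2-spherical}, and your combination of Deodhar's reflection-subgroup theorem (to produce the canonical generating set $R'=S_C$ with fundamental chamber $D$) with the reflection rigidity of irreducible $2$-spherical non-spherical systems is precisely that combination. The main steps are sound.

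One step is justified by a false general principle, though the conclusion is still correct. You argue uniqueness by saying that ``a connected chamber region in a wall arrangement is determined by its bounding walls''; this fails already for the finite dihedral group of order $6$ acting on the plane, where opposite sectors are bounded by the same two lines. The correct argument must use the standing hypotheses: a $W_S$-chamber $v.D_S$ is adjacent to the wall $\mathcal W_s$ exactly when $s\in vSv\inv$, so adjacency to all the $\mathcal W_s$ forces $vSv\inv=S$; since $S$ is irreducible and non-spherical (hence $W_S$ is infinite), the standard root-system argument (writing $S=A\sqcup B$ according to the sign of $v\alpha_s$ and using irreducibility) gives $v=1$. Relatedly, your description of the chambers of $\Phi_s\cap\Phi_{s'}$ as ``the $\la s,s'\ra$-orbit of $D_S$'' is misstated --- that orbit contains $s.D_S\not\subset\Phi_s$; what you want is that the $W_S$-chambers in $\Phi_s\cap\Phi_{s'}$ are the $v.D_S$ with $v$ of minimal length in $\la s,s'\ra v$, which form a transversal of the $\la s,s'\ra$-orbits and hence exhibit $\Phi_s\cap\Phi_{s'}$ as a fundamental domain. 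With these two repairs the proof is complete and coincides with the paper's.
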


\section{Choices of half-spaces}
\label{half-space choices}

Recall that we simultaneously work in the reference Davis complex
$\mathbb A_{\mathrm{ref}}$ associated with the generating set $S$,
where the wall fixed by a reflection $r\in W$ is denoted by
$\mathcal{Y}_r$, and in the ambient Davis complex $\mathbb
A_{\mathrm{amb}}$, where the wall fixed by $r$ is denoted by
$\mathcal{W}_r$. The word-length $\ell(\cdot)$ will be always
measured with respect to the set $S$.

The aim of this section is to introduce the notions of
\emph{complete}, \emph{semi-complete} and \emph{good markings}. Each
such marking $\mu$ with \emph{core} $s\in S$ will determine a
half-space $\Phi_s^\mu$ for $s$ in the ambient Davis complex
$\mathbb A_{\mathrm{amb}}$.

First, we give a rough definition of a \emph{complete marking}. We
start at $c_0$, the identity chamber of $\mathbb A_{\mathrm{ref}}$.
We consider a gallery issuing from $c_0$ which moves away from
$\mathcal Y_s$. We stop at the first wall we cross which does not
intersect $\mathcal Y_s$, and we denote it by $\mathcal Y^\mu$. If
the type of the last panel is $m\in S$, then $\mathcal
Y^\mu=w\mathcal Y_m$, where $w.c_0$ is the last chamber of our
gallery. We call $m$ the \emph{marker} and $(s,w)$ the \emph{base}.

We now give the precise definitions.

\begin{defin}
\label{domain} A \textbf{base} is a pair
$(s,w)$ with $s\in S$ and $w\in W$ satisfying
\begin{enumerate}[(i)]
\item
$d(w.c_0,\mathcal{Y}_{s})=\ell(w)$, and
\item
every wall which separates $w.c_0$ from $c_0$ intersects $\mathcal{Y}_s$.
\end{enumerate}

Note that condition (i) is equivalent to $d(w.c_0,
sw.c_0)=2\ell(w)+1$. We call $s$ the \textbf{core} of the base. The
\textbf{support} of the base is the smallest subset $J\subset S$
such that $W_J$ contains both $s$ and $w$. A base is
\textbf{irreducible, spherical, $2$-spherical}, \emph{etc}, if its
support is irreducible, spherical, $2$-spherical, \emph{etc}.
\end{defin}
\begin{rem}
\label{extending base}
Let $(s,w)$ be a base and let $j_1\ldots j_n$ be a word (over $S$) of minimal length representing $w$. Then the support $J$ of $(s,w)$ equals
$\{s,j_1,\ldots, j_n\}$.
\begin{enumerate}[(i)]
\item By Lemma~\ref{brink-howlett}, condition (i) in Definition
\ref{domain} is equivalent to $\langle j_{i-1}\ldots
j_1\alpha_{s}, \alpha_{j_i}\rangle<0,$ for all $1\leq i\leq n$.
(Condition (ii) is equivalent to this expression being greater than
$-1$.)
\item
In particular, if we write $\alpha^i=j_i\ldots j_1\alpha_{s}=\sum_{j\in J}\lambda^i_j\alpha_j$, then, since
$\alpha^i=\alpha^{i-1}-2\langle\alpha^{i-1}, \alpha_{j_i}\rangle \alpha_{j_i}$, the coefficients $\lambda^i_j$ are nondecreasing in $i$.
\item
By (i), if $j\in S\setminus (J\cup J^\bot)$, then $(s,wj)$
satisfies condition (i) in Definition~\ref{domain}.
\end{enumerate}
\end{rem}

\begin{rem}
\label{parallel wall theorem} By Theorem~\ref{parallel wall theorem
main}, for fixed $(W,S)$ the value of $\ell(w)$ in
Definition~\ref{domain} is bounded.
\end{rem}

Below we show that the support of a base must be $2$-spherical of a very specific type.

\begin{defin}
A $2$-spherical subset $J\subset S$ is \textbf{tree-$2$-spherical}
if its Dynkin diagram is a union of trees.
\end{defin}

\begin{lemma}
\label{domain tree-2-spherical} Any base $(s,w)$ is irreducible
tree-$2$-spherical (\emph{i.e.}\ its support $J$ is irreducible
tree-$2$-spherical).
\end{lemma}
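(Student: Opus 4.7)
My plan is to pass to the Tits representation, fix a minimal-length expression $w = j_1 \cdots j_n$ (so that $J = \{s, j_1, \ldots, j_n\}$ by Remark~\ref{extending base}), and exploit the inequalities
$$
-1 < \langle \alpha^{i-1}, \alpha_{j_i}\rangle < 0 \qquad (1 \leq i \leq n)
$$
that the same remark extracts from the two conditions of Definition~\ref{domain}, where $\alpha^i := j_i \cdots j_1 \alpha_s$. Since any prefix of a reduced expression is reduced and the prefix inequalities form a subset of the full list, every prefix $(s, j_1 \cdots j_k)$ is again a base with support $J_k := \{s, j_1, \ldots, j_k\}$. So it suffices to prove, by induction on $i$, that $J_i$ is irreducible tree-$2$-spherical; the lemma is the case $i = n$.

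For the induction I would carry along the strengthened statement that, simultaneously, $J_i$ is irreducible tree-$2$-spherical \emph{and} the coefficients $\lambda^i_j$ in the expansion $\alpha^i = \sum_{j \in J_i} \lambda^i_j \alpha_j$ satisfy $\lambda^i_j \geq 1$ for every $j \in J_i$. The base case $i = 0$ is immediate since $\alpha^0 = \alpha_s$. When $j_i \in J_{i-1}$ the support is unchanged, and only $\lambda^i_{j_i}$ is updated, by an amount $-2\langle \alpha^{i-1}, \alpha_{j_i}\rangle \in (0, 2)$, so the invariant $\lambda \geq 1$ is trivially preserved. The substantive case is $j_i \notin J_{i-1}$, where I would let $N_i \subset J_{i-1}$ denote the set of neighbours of $j_i$ in the Coxeter diagram and expand
$$
\langle \alpha^{i-1}, \alpha_{j_i}\rangle \;=\; \sum_{j \in N_i} \lambda^{i-1}_j \langle \alpha_j, \alpha_{j_i}\rangle.
$$

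Using $\langle \alpha_j, \alpha_{j_i}\rangle \leq -\tfrac{1}{2}$ when $\{j, j_i\}$ is spherical (with $m_{j,j_i} \geq 3$) and $\langle \alpha_j, \alpha_{j_i}\rangle \leq -1$ when it is not, together with the inductive lower bound $\lambda^{i-1}_j \geq 1$, I would use the two bounds on $\langle \alpha^{i-1}, \alpha_{j_i}\rangle$ to rule out three bad configurations: $N_i = \emptyset$ forces the inner product to equal $0$; $|N_i| \geq 2$ forces it to be $\leq -1$; and $|N_i| = 1$ with the unique edge non-spherical also forces it to be $\leq -1$. What remains is that $j_i$ attaches to $J_{i-1}$ along a unique spherical edge, which is precisely what is needed for $J_i$ to remain connected (hence irreducible), acyclic (hence tree-$2$-spherical), and $2$-spherical. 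The updated coefficient satisfies $\lambda^i_{j_i} = -2\langle\alpha^{i-1}, \alpha_{j_i}\rangle \geq -2 \cdot 1 \cdot (-\tfrac{1}{2}) = 1$, closing the induction. The main obstacle I anticipate is recognising that the lower bound $\lambda^i_j \geq 1$ is the correct auxiliary invariant to carry through the induction: the raw inequality $\langle \alpha^{i-1}, \alpha_{j_i}\rangle > -1$ is by itself too weak to exclude a non-spherical attachment or the creation of a cycle, and it is the coupling with $\lambda \geq 1$ that makes the estimate go through.
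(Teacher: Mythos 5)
Your proof is correct and is essentially the paper's argument: both rest on the inequalities $-1<\langle\alpha^{i-1},\alpha_{j_i}\rangle<0$ from Remark~\ref{extending base}, the inductive lower bound $\lambda^{i-1}_j\geq 1$, and the observation that a new letter attaching along two edges or along a non-spherical edge would force $\langle\alpha^{i-1},\alpha_{j_i}\rangle\leq -1$. The only difference is organisational --- you run a single coupled induction on prefixes, whereas the paper argues by taking the least index at which reducibility or non-tree-$2$-sphericality first occurs and cites the bound $\lambda^{i-1}_j\geq 1$ as an inductive aside.
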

\begin{proof}
Let $j_1\ldots j_n$ be a word of minimal length representing $w$.
If $J=\{s,j_1,\ldots, j_n\}$ is reducible, and $i$ is the least index for which $\{s,
\ldots j_i\}$ is reducible, then the distances from
$j_1\ldots j_{i-1}.c_0$ and $j_1\ldots j_i.c_0$ to
$\mathcal{Y}_{s}$ are equal, violating condition (i) in Definition
\ref{domain}.

If $J$ is not tree-$2$-spherical, then let $i$ be the least index such that
$J'=\{s,j_1,\ldots j_i\}$ is not tree-$2$-spherical. We claim that
$j_1\ldots j_{i-1}\mathcal{Y}_{j_i}$ does not intersect
$\mathcal{Y}_{s}$, contradicting condition (ii) in Definition
\ref{domain}.

Indeed, by Remark~\ref{extending base}(ii) one can show inductively
that all $\lambda^{i-1}_j$, for $j\in J'\setminus\{j_i\}$, equal at
least $1$. Since $J'$ is not tree-$2$-spherical, there are at least
two elements $j\in J'\setminus\{j_i\}$ such that $\langle
\lambda^{i-1}_j\alpha_j,\alpha_{j_i}\rangle\leq -\frac{1}{2}$, or at
least one with $\langle
\lambda^{i-1}_j\alpha_j,\alpha_{j_i}\rangle\leq -1$. Hence $\langle
\alpha^{i-1}, \alpha_{j_i}\rangle\leq -1$ and consequently $\mathcal
Y_{j_i}$ does not intersect $j_{i-1}\ldots j_1\mathcal Y_s$, as
required.
\end{proof}

Throughout most of the article we will be only discussing the
following special kind of a base.

\begin{defin}
\label{defin of simple}
Assume that $(s,w)$ is a base satisfying $w=j_1\ldots j_n$ where all $j_i$ are pairwise different and different from $s$. We call such a base \textbf{simple}.

If $J\subset S$ is irreducible spherical and $s\in J$, then there
exists a simple base with support $J$ and core $s$. Namely, it
suffices to order the elements of $J\setminus \{s\}$ into a sequence
$(j_i)$ so that for every $1\leq i\leq n$ the set $\{s, j_1,\ldots,
j_i\}$ is irreducible. Every $(s,j_1\ldots j_i)$ is a base by
inductive application of Remark~\ref{extending base}(iii).
\end{defin}

\begin{lemma}
\label{simple} Two simple bases with common core and common support
are equal.
\end{lemma}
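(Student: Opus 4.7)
The plan is to show that if $(s,w)$ and $(s,w')$ are two simple bases with common support $J$ and core $s$, then $w = w'$ in $W$; writing $w = j_1\cdots j_n$ and $w' = j'_1 \cdots j'_n$ as reduced expressions in pairwise distinct letters of $J \setminus \{s\}$, I want to interpret both orderings as linear extensions of a natural partial order on $J \setminus \{s\}$ and then connect them by transpositions of commuting adjacent letters.

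First, Lemma~\ref{domain tree-2-spherical} gives that $J$ is irreducible and tree-$2$-spherical, so its Dynkin diagram $T$ is a tree. Rewriting conditions (i) and (ii) of Definition~\ref{domain} in the inner-product form provided by Remark~\ref{extending base}(i) shows that both conditions are inherited by prefixes, so every prefix of a simple base is again a simple base; applying Lemma~\ref{domain tree-2-spherical} to each such prefix, the set $\{s, j_1, \ldots, j_i\}$ is irreducible and therefore spans a connected subtree of $T$ for every $1 \le i \le n$. Rooting $T$ at $s$, this says precisely that $(j_1, \ldots, j_n)$ is a linear extension of the ancestor-descendant partial order $\preceq$ on $J \setminus \{s\}$, and the same holds for $(j'_1, \ldots, j'_n)$.

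I would then invoke the standard combinatorial fact that any two linear extensions of a finite poset are joined by a sequence of transpositions of adjacent pairs which are incomparable under the order. Each such transposition preserves the corresponding product in $W$: if $j, j' \in J \setminus \{s\}$ are $\preceq$-incomparable, then their paths from $s$ in $T$ split at a common ancestor distinct from both, so $j$ and $j'$ are non-adjacent in $T$; by tree-$2$-sphericality this forces $m_{jj'} = 2$, so $j$ and $j'$ commute in $W$. The equality $w = w'$ follows.

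The only mildly delicate point I expect is the verification that both base conditions persist when a simple base is shortened to a prefix; this is immediate once one writes (i)--(ii) as inner-product inequalities via Remark~\ref{extending base}(i). The remaining combinatorial ingredient---that incomparability in the rooted-tree order at $s$ implies non-adjacency in $T$---is a feature special to trees, and is precisely what makes tree-$2$-sphericality the correct hypothesis for this lemma.
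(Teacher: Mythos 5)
Your proof is correct and is essentially the paper's argument in different clothing: the paper also roots the tree at $s$, uses irreducibility of the prefix-supports $\{s,j_1,\dots,j_i\}$ to see they are connected subtrees, and sorts one word into the other by swapping adjacent commuting letters. Your repackaging via linear extensions of the ancestor order (plus the standard fact that two linear extensions are connected by adjacent incomparable transpositions) is a clean way to organise the same combinatorics, and your observation that prefixes of a simple base are again bases correctly supplies the irreducibility input.
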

\begin{proof}
Let $(s,w)$ and $(s,w')$ be two simple bases with common core and
support. Let $w=j_1\ldots j_n$ and $w'=j_{\pi(1)}\ldots j_{\pi(n)}$,
where $\pi$ is a permutation of the set $\{1,\ldots, n\}$. To
re-order the $j_i$'s and prove $w'=w$ it suffices to show that if
for some $1<i\leq n$ the element $j_{\pi(i-1)}$ does not commute
with $j_{\pi(i)}$, then we have $\pi(i-1)<\pi(i)$.

First observe that, by condition (i) in Definition~\ref{domain}, the
sets $T_i=\{s,j_1,\ldots, j_i\}$ and $T'_i=\{s,j_{\pi(1)},\ldots,
j_{\pi(i)}\}$ are irreducible for every $1\leq i\leq n$ (as in the
proof of Lemma~\ref{domain tree-2-spherical}). By Lemma~\ref{domain
tree-2-spherical} the Dynkin diagram of $J$ is a tree, hence the
Dynkin diagrams of all the $T_i$ and $T'_i$ are subtrees.

If $j_{\pi(i-1)}$ does not commute with $j_{\pi(i)}$, then
$j_{\pi(i-1)}$ separates $j_{\pi(i)}$ from $s$ in the tree
corresponding to $T'_i$. In particular $j_{\pi(i-1)}$ separates
$j_{\pi(i)}$ from $s$ in the tree corresponding to the entire $J$.
Hence $j_{\pi(i-1)}$ belongs to $T_{\pi(i)}$ and consequently we
have $\pi(i-1)<\pi(i)$, as desired.
\end{proof}

Finally, we define a \emph{complete marking}.

\begin{defin}
\label{marking} A \textbf{marking} is a pair $((s,w),m)$, where
$(s,w)$ is a base (the \textbf{base} of the marking) and
$m\in S\setminus J^\bot$ ($m$ is called the \textbf{marker}), where $J$ is the support of $(s,w)$.

We say that the marking is \textbf{complete}, if $w\mathcal Y_m$ does not intersect $\mathcal Y_s$, \emph{i.e.}\
$(s,wm)$ does not satisfy condition (ii) in
Definition~\ref{domain}.

The \textbf{core} and the \textbf{support} of the marking $\mu$ are
the core and the support of its base. The marking $\mu$ is \textbf{simple} if
its base is simple and $m\notin J$.
\end{defin}

\begin{rem}
\label{extending marking}~\\
\vspace{-.3cm}
\begin{enumerate}[(i)]
\item
By Remark~\ref{extending base}(ii), if $((s,w),m)$ is a complete
marking and $j\in S$ is such that $\ell(wj)>\ell(w)$ and $(s,wj)$ is
a base, then $((s,wj),m)$ is a complete marking.
\item
In particular, in view of Remark~\ref{extending base}(iii), we have the
following. If $((s,w),m)$ is a complete marking with support $J$, and $j\in
S\setminus (J\cup J^\bot)$ is such that $((s,w),j)$ is not a
complete marking, then $((s,wj),m)$ is a complete
marking.
\end{enumerate}
\end{rem}

We describe how complete markings with core $s$ determine half-spaces for $s$ in
the ambient Davis complex $\mathbb A_{\mathrm{amb}}$.

\begin{defin}
\label{half-space} Let $\mu=((s,w),m)$ be a complete marking with
core $s$. Denote $\mathcal{W}^\mu=w\mathcal{W}_m$. We define
$\Phi_s^\mu=\Phi(\mathcal{W}_s, \mathcal{W}^\mu)$ (which, as in the
introduction, denotes the half-space for $s$ containing
$\mathcal{W}^\mu$ in $\mathbb A_{\mathrm{amb}}$).
\end{defin}

There is another way to determine half-spaces for $s$.

\begin{defin}
\label{half-space semi-complete} A marking $((s,w),m)$ with support
$J$ is \textbf{semi-complete} \sloppy if $J\cup \{m\}$ is
irreducible $2$-spherical but non-spherical. We then define
$\Phi_s^{\mu}$ to be the half-space for $s$ in $\mathbb
A_{\mathrm{amb}}$ given by Corollary~\ref{2-sph corollary}. If a
marking is at the same time complete and semi-complete, then by
Corollary~\ref{2-sph corollary} this coincides with
Definition~\ref{half-space}.
\end{defin}

Note that a complete marking might not be semi-complete; we have
decided to use this term to underline the fact that we are treating
complete and semi-complete markings similarly.

\begin{rem}
\label{extra remark} If $(s,w)$ is a base with support $J$ and $m\in
S\setminus J^\bot$ is such that $J\cup \{m\}$ is non-spherical, then
$((s,w),m)$ is a semi-complete or complete marking. This follows
from the fact that if $J\cup \{m\}$ is not $2$-spherical, then by
Remark~\ref{extending base}(ii) the wall $\mathcal Y_m$ does not
intersect $w^{-1}\mathcal Y_s$.
\end{rem}

\begin{defin}
Finally, a \textbf{good} marking is a complete or semi-complete
simple marking with spherical base.
\end{defin}

We point out that, under mild hypothesis, good markings exist.

\begin{lemma}
\label{good markings exist} Let $S$ be an irreducible non-spherical
Coxeter generating set for $W$. Then for every $s\in S$ there exists
a good marking with core $s$.
\end{lemma}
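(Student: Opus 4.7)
The plan is to construct a good marking by choosing $J \subset S$ maximal among irreducible spherical subsets containing $s$, taking a simple base $(s,w)$ with support $J$ furnished by Definition~\ref{defin of simple}, and then selecting a marker $m \in S \setminus (J \cup J^\bot)$.

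First I would observe that $\{s\}$ is itself irreducible and spherical, so by finiteness of $S$ a maximal irreducible spherical subset $J \subset S$ containing $s$ exists, and $J \neq S$ since $S$ is non-spherical. Definition~\ref{defin of simple} then produces a simple base $(s,w)$ with support $J$ and core $s$; in particular this base is spherical, which takes care of one of the two requirements for a good marking.

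The crux is to find a suitable marker $m$. I claim that $S \setminus (J \cup J^\bot)$ is non-empty: otherwise $S = J \cup J^\bot$ disjointly with $J^\bot \neq \emptyset$ (since $J \neq S$), which exhibits $S$ as a decomposition into two non-empty subsets with every element of $J^\bot$ commuting with every element of $J$, contradicting the irreducibility of $S$. Pick any $m \in S \setminus (J \cup J^\bot)$. Since $m \notin J^\bot$, the element $m$ fails to commute with some element of $J$, and combined with irreducibility of $J$ this forces $J \cup \{m\}$ to be irreducible. By maximality of $J$ among irreducible spherical subsets containing $s$, the set $J \cup \{m\}$ must then be non-spherical.

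Finally I would apply Remark~\ref{extra remark} to $(s,w)$ and $m$: since $m \in S \setminus J^\bot$ and $J \cup \{m\}$ is non-spherical, $((s,w),m)$ is complete or semi-complete (falling into the semi-complete case when $J \cup \{m\}$ happens to be $2$-spherical, and into the complete case otherwise). Because the base is simple and $m \notin J$, the marking is simple; combined with sphericity of the base, $((s,w),m)$ is a good marking with core $s$, as desired. The only genuinely subtle point in the argument is verifying that $S \setminus (J \cup J^\bot) \neq \emptyset$, and this is exactly where both hypotheses on $S$ (irreducibility and non-sphericity) enter in an essential way.
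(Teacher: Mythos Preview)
Your proof is correct and follows essentially the same approach as the paper: choose $J$ maximal irreducible spherical containing $s$, pick any marker $m \in S \setminus (J \cup J^\bot)$, and invoke Remark~\ref{extra remark}. You have simply made explicit the steps the paper leaves to the reader, namely the non-emptiness of $S \setminus (J \cup J^\bot)$ and the non-sphericality of $J \cup \{m\}$.
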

\begin{proof}
Let $J\subset S$ be a maximal irreducible spherical subset
containing $s$. Any simple marking with support $J$ and with marker
in $S\setminus (J\cup J^\bot)$ is good, by Remark~\ref{extra
remark}.
\end{proof}

The main element of the proof of Theorem~\ref{reduced main} is the
following.

\begin{thm}
\label{well defined} Let $S$ and $R$ be reflection-compatible
Coxeter generating sets for $W$. Assume that $S$ is twist-rigid. Let
$s\in S$. We consider all (if there are any) good markings $\mu$
with core $s$. Then the half-space $\Phi_s^{\mu}$ (in $\mathbb
A_{\mathrm{amb}}$) does not depend on $\mu$.
\end{thm}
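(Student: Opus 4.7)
The plan is to isolate a family of elementary \emph{moves} relating pairs of good markings with common core $s$, to verify that every move preserves the induced half-space, and then to show that any two good markings with core $s$ are connected by a finite sequence of moves. Since the hypothesis of the theorem includes only reflection-compatibility rather than angle-compatibility, the analysis is purely in terms of when parallel or intersecting walls in $\mathbb A_{\mathrm{amb}}$ are forced to lie in a common half-space of $\mathcal W_s$.

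\textbf{Moves preserve the half-space.} The prototypical move is a \emph{marker swap}: given a complete marking $\mu=((s,w),m)$ and a marker $m'\in S\setminus J^\bot$ with $\{m,m'\}$ spherical and $\mu'=((s,w),m')$ again complete, pass from $\mu$ to $\mu'$. Then $\mathcal W_m\cap\mathcal W_{m'}\neq\emptyset$ in $\mathbb A_{\mathrm{amb}}$ (sphericality of $\{m,m'\}$ is a reflection-invariant condition, so it transfers across reflection-compatible generating sets), whence $\mathcal W^\mu$ and $\mathcal W^{\mu'}$ intersect. Both are parallel to $\mathcal W_s$ (by completeness and the same reflection-compatibility argument), and two walls parallel to a common wall that share a point must lie in the same open half-space of it; hence $\Phi_s^\mu=\Phi_s^{\mu'}$. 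Similarly, a \emph{base extension} $((s,w),m)\mapsto((s,wj),m)$ as in Remark~\ref{extending marking}(ii) leaves $\Phi_s^\mu$ unchanged, because the walls $w\mathcal W_m$ and $wj\mathcal W_m$ are reflections of one another in $w\mathcal W_j$, which intersects both (as $\{j,m\}$ is spherical). For the semi-complete regime, Corollary~\ref{2-sph corollary} provides a $W_{J\cup\{m\}}$-equivariant choice of half-space depending only on the set $J\cup\{m\}$, so any move that leaves $J\cup\{m\}$ unchanged (in particular any permutation of the letters of the simple word representing $w$) preserves $\Phi_s^\mu$ automatically. The last clause of Definition~\ref{half-space semi-complete} ensures that the complete and semi-complete recipes agree when both apply, so moves crossing between the two regimes are also admissible.

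\textbf{Connecting good markings by moves.} The heart of the proof is the combinatorial statement that any two good markings with core $s$ can be linked by such moves. By Lemma~\ref{simple}, a good marking is encoded by a pair $(J,m)$, where $J\subset S$ is an irreducible spherical subset containing $s$ and $m\in S\setminus J^\bot$ makes $J\cup\{m\}$ non-spherical (cf.\ Remark~\ref{extra remark}). One introduces an auxiliary graph on such pairs whose edges correspond to elementary moves, and shows it is connected. This is carried out by the hierarchy machinery of Masur--Minsky adapted to the Coxeter diagram of $(W,S)$: a main geodesic in the diagram joining the supports of two given good markings is chosen, then resolved into a sequence of \emph{slices}, each slice giving a good marking and each transition between consecutive slices being a move.

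\textbf{Main obstacle.} The crux, and the only place where twist-rigidity of $S$ is used, is the construction and resolution of these hierarchies (Lemma~\ref{hierachies exist}); if an elementary twist existed, it would manufacture two good markings with common core whose associated $\Phi_s^\mu$'s differ, so twist-rigidity is exactly what prevents the combinatorial graph above from disconnecting. A secondary technical difficulty is that the sequence of markings realising a hierarchy may pass through complete markings that are no longer simple or spherical-based; to accommodate this one must extend move-invariance of $\Phi_s^\mu$ to the wider class of all complete markings, which is precisely the content of Section~\ref{section 2-spherical complete markings} and requires a second application of the twist-rigidity hypothesis through Lemma~\ref{avoiding shadow}.
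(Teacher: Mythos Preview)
Your proposal is correct and follows essentially the same approach as the paper: define moves (your ``marker swap'' and ``base extension'' are precisely the paper's moves M1 and M2, while the semi-complete case is handled by the paper's M3 and M4 via Corollary~\ref{2-sph corollary}), prove each move preserves $\Phi_s^\mu$ (Lemma~\ref{moves do not change half-space}), and then use the Masur--Minsky hierarchy formalism together with the shadow argument of Section~\ref{section 2-spherical complete markings} to connect any two good markings by moves, invoking twist-rigidity exactly at Lemmas~\ref{hierachies exist} and~\ref{avoiding shadow}. The only minor inaccuracy is your heuristic that ``an elementary twist would manufacture two good markings whose $\Phi_s^\mu$'s differ''---in the paper twist-rigidity enters not as an obstruction to consistency of half-spaces but purely as a connectivity hypothesis guaranteeing the existence of the needed geodesic paths in $S\setminus(J\cup J^\bot)$.
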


We explain the proof and the consequences of Theorem~\ref{well
defined} in the next sections.

Observe that in Theorem~\ref{well defined} we only assume that $S$
and $R$ are reflection-compatible and we do not require them to be
angle-compatible.

Although in the statement of Theorem~\ref{well defined} we consider
only a restricted family of markings, namely the good markings, the
other more general markings will come up in the proof.

\section{Moves}
\label{section moves}

In this Section we describe the ingredients of the proof of
Theorem~\ref{well defined}.

\begin{defin}
Let $((s,w),m), ((s,w'),m')$ be complete or semi-complete \sloppy
markings with common core and supports $J,J'$. We say that they are
related by \textbf{move}
\begin{enumerate}
\item[\textbf{M1}] if $w=w'$, the markers $m$ and $m'$ are adjacent, and both
markings are complete,
\item[\textbf{M2}] if there is $j\in S$ such that $w=w'j$ and
moreover $m$ equals $m'$ and is adjacent to $j$.
\item[\textbf{M3}] if $J\cup\{m\}\cup J'\cup\{m'\}$ is
$2$-spherical,
\item[\textbf{M4}] if $((s,w),m)$ is complete, for some maximal
spherical subset $K$ of $J$ we have $K\subset m^\bot$, and $J=J'\cup
\{m'\}$.
\end{enumerate}
\end{defin}

The half-spaces $\Phi_s^\mu$ below are chosen in $\mathbb
A_{\mathrm{amb}}$, as in Definitions~\ref{half-space} and
\ref{half-space semi-complete}.

\begin{lemma}
\label{moves do not change half-space} If markings $\mu$ and $\mu'$
are related by one of moves M1--M4, then $\Phi_s^\mu=\Phi_s^{\mu'}$,
where $s$ is the common core of $\mu$ and $\mu'$.
\end{lemma}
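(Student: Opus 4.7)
The plan is to verify $\Phi_s^\mu = \Phi_s^{\mu'}$ separately for each of the four moves, the unifying principle being that the two half-spaces must lie on the same side of $\mathcal{W}_s$. I will certify this either by producing an explicit intersection between the walls $\mathcal{W}^\mu$ and $\mathcal{W}^{\mu'}$ in $\mathbb{A}_{\mathrm{amb}}$, or by embedding both markings into a common irreducible $2$-spherical non-spherical subsystem and invoking Corollary~\ref{2-sph corollary}. The starting observation, used throughout, is that the bijection $\mathcal{Y}_r \mapsto \mathcal{W}_r$ preserves the intersection relation of walls, because in both Davis complexes two walls intersect iff the corresponding reflections generate a finite subgroup.

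For \textbf{M1}, both markings are complete with the same base, so $\mathcal{W}^\mu = w\mathcal{W}_m$ and $\mathcal{W}^{\mu'} = w\mathcal{W}_{m'}$. Sphericity of $\{m,m'\}$ gives $\mathcal{W}_m \cap \mathcal{W}_{m'} \neq \emptyset$, and after translating by $w$ the walls still intersect; completeness ensures that neither meets $\mathcal{W}_s$, so they lie on the same side. For \textbf{M2}, write $w = w'j$ with $j$ adjacent to the common marker $m$. Then $\mathcal{W}^\mu = w'(j\mathcal{W}_m)$, and $j\mathcal{W}_m$ is the wall of $jmj^{-1}$, which together with $m$ lies in the finite dihedral group $\langle m,j\rangle$; hence $j\mathcal{W}_m$ meets $\mathcal{W}_m$ in $\mathbb{A}_{\mathrm{amb}}$. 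Translating by $w'$ shows $\mathcal{W}^\mu$ meets $\mathcal{W}^{\mu'}$, and the argument concludes as in M1. In the semi-complete versions of these moves, the support-plus-marker sets of $\mu$ and $\mu'$ both sit inside a common irreducible $2$-spherical non-spherical subset (namely $J\cup\{m,m'\}$ in M1, or $J\cup\{m\}\cup\{j\}$ in M2), and the M3 argument below applies.

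For \textbf{M3}, set $K := J\cup\{m\}\cup J'\cup\{m'\}$. It is $2$-spherical by hypothesis, irreducible because $J\cup\{m\}$ and $J'\cup\{m'\}$ are each irreducible (the supports are irreducible by Lemma~\ref{domain tree-2-spherical}, and each marker is joined to its support since $m\notin J^\bot$, $m'\notin J'^\bot$) and share the core $s$, and non-spherical because any spherical Coxeter subgroup would force all its walls to pairwise intersect, contradicting the defining condition of completeness or semi-completeness. Applying Corollary~\ref{2-sph corollary} to $K$ produces a canonical $W_K$-fundamental domain $D\subset\mathbb{A}_{\mathrm{amb}}$ and a half-space $\Phi_s^K$ for $s$ containing $D$. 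By Definition~\ref{half-space semi-complete}, reconciled with Definition~\ref{half-space} when both apply, each of $\Phi_s^\mu$ and $\Phi_s^{\mu'}$ is the half-space for $s$ obtained via Corollary~\ref{2-sph corollary} applied to the sub-system $J\cup\{m\}$ or $J'\cup\{m'\}$; as the corresponding smaller fundamental domains both contain $D$, both half-spaces equal $\Phi_s^K$.

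Move \textbf{M4} will be the main obstacle. Here $\mu$ is complete with support $J$, the support $J'$ of $\mu'$ satisfies $J = J'\cup\{m'\}$, and $K\subset J$ is a maximal spherical subset with $K\subset m^\bot$. The plan is to exploit the longest element $w_K\in W_K$, which commutes with $m$ so that $w_K\mathcal{W}_m = \mathcal{W}_m$, in order to rewrite a minimal expression for $w$ in a way that exposes $m'$ as an absorbable final letter. This should build a chain of intermediate complete or semi-complete markings linking $\mu$ to $\mu'$ by moves of types M1--M3, together with the elementary commutation $w_K m = m w_K$. The main combinatorial difficulty will be verifying that the rewriting respects conditions (i) and (ii) of Definition~\ref{domain} throughout, and that the intermediate supports remain irreducible and $2$-spherical so that the previously verified moves apply; once this is in place, $\Phi_s^\mu = \Phi_s^{\mu'}$ follows by transitivity along the chain.
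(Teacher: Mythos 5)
Your arguments for M1, M2 and M3 are sound and essentially the paper's: M1 and the complete case of M2 via an explicit intersection of the translated walls, the non-complete case of M2 by reduction to M3, and M3 via Corollary~\ref{2-sph corollary} (your extra verification that the relevant fundamental domains are nested is correct, just more detail than needed; also note that M1 by definition has both markings complete, so there is no semi-complete version of it to treat).

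The genuine gap is M4: you do not prove it, you only announce a plan, and the plan itself is unlikely to succeed. Trying to rewrite $w$ using $w_K$ so as to decompose M4 into a chain of M1--M3 moves misreads the role of M4 --- it is included in the list precisely because it is \emph{not} reducible to the other moves, and a rewriting of a reduced word for $w$ will not in general produce intermediate words that still satisfy conditions (i) and (ii) of Definition~\ref{domain}. The idea you are missing is to use the fixed-point set $F$ of $wW_Kw^{-1}$ in $\mathbb{A}_{\mathrm{amb}}$ as the intermediary object, rather than another wall. On one hand, since $K\subset m^\bot$, the wall $\mathcal{W}^\mu=w\mathcal{W}_m$ meets $F$. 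On the other hand, since $J=J'\cup\{m'\}$ is irreducible $2$-spherical non-spherical and $K$ is a maximal spherical subset of $J$, Corollary~\ref{2-sph corollary} shows that $F$ is disjoint from $\mathcal{W}_s$ and contained in the half-space $\Phi_s^{\mu'}$ determined by the semi-complete marking $\mu'$. Hence $\mathcal{W}^\mu$ lies in $\Phi_s^{\mu'}$ and $\Phi_s^\mu=\Phi_s^{\mu'}$, with no chain of intermediate markings required.
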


\begin{proof} The argument depends on the type of the move.
\begin{enumerate}
\item[\textbf{M1}]
Since $\mathcal{W}_m$ intersects
$\mathcal{W}_{m'}$, it follows that $\mathcal{W}^\mu=w\mathcal{W}_m$
intersects $\mathcal{W}^{\mu'}=w\mathcal{W}_{m'}$. They are both
disjoint from $\mathcal{W}_{s}$, hence they lie in the same
half-space for $s$.

\item[\textbf{M2}]
If any of the markings is not complete, then they are also related
by move M3, see below.

Assume now that both markings are complete. Since $\mathcal{W}_j$
intersects $\mathcal{W}_m$, it follows that $j\mathcal{W}_m$
intersects $\mathcal{W}_m$. Hence
$\mathcal{W}^{\mu'}=w'\mathcal{W}_m=wj\mathcal{W}_m$ intersects
$\mathcal{W}^\mu=w\mathcal{W}_m$ and they lie in the same half-space
for $s$.

\item[\textbf{M3}] This case
follows immediately from Corollary~\ref{2-sph corollary}.

\item[\textbf{M4}]
Since $J$ is non-spherical and $K\subset J$ is maximal spherical,
the fixed point set of $wW_Kw^{-1}$ in $\mathbb A_{\mathrm{amb}}$ is
disjoint from $\mathcal{W}_s$ and is contained in the half-space
$\Phi^{\mu'}_s$ (Corollary~\ref{2-sph corollary}). Moreover,
$\mathcal{W}^\mu=w\mathcal{W}_{m}$ intersects this fixed point set,
hence we have $\Phi^\mu_s=\Phi^{\mu'}_s$.
\end{enumerate}
\end{proof}

\medskip
In view of Lemma~\ref{moves do not change half-space}, in order to
prove Theorem~\ref{well defined}, it is enough to prove the
following.

\begin{thm}
\label{connected by moves} Let $S$ be a twist-rigid Coxeter
generating set for $W$. Let $\mu$ and $\mu'$ be good markings with common
core $s\in S$. Then there is a sequence, from $\mu$ to $\mu'$, of
complete or semi-complete markings such that each two consecutive
ones are related by one of moves M1--M4.
\end{thm}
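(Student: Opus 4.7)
The plan is to encode good markings combinatorially, translate each of the moves M1--M4 into an operation on that data, and then reduce the theorem to a connectivity statement in the Coxeter diagram $\Gamma$ of $(W,S)$, which is the graph-theoretic content of results proved in later sections using hierarchies. First, by Lemma~\ref{simple} a simple base with core $s$ is determined by its support, so a good marking with core $s$ is specified by a pair $(J,m)$, where $J\subset S$ is irreducible spherical containing $s$ and $m\in S\setminus J^\bot$ makes the marking either complete or semi-complete. I would record, for each move, its effect on $(J,m)$: M1 keeps $J$ and replaces $m$ by an adjacent marker; M2 inserts or removes a letter of $S$ appearing at the tail of the reduced word for $w$ and commuting with the final prefix, fixing $m$; M3 applies whenever $J\cup\{m\}\cup J'\cup\{m'\}$ is $2$-spherical; and M4 replaces $J$ by $J'=J\setminus\{m'\}$ when $m$ is orthogonal to a maximal spherical $K\subset J$.

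Second, I would dispose of the ``same support'' case. Fix an irreducible spherical $J\ni s$ and consider the graph whose vertices are admissible markers $m$ (those yielding a good marking) with edges given by moves M1 and M3. I would verify that this graph is connected: two markers adjacent in $\Gamma$ are directly linked by M1, while most other pairs are linked by M3, since their joint support with $J$ is $2$-spherical as soon as it avoids a non-edge of $\Gamma$. Once markers are handled, the problem reduces to connecting good markings whose supports $J,J'$ differ, with the freedom to choose the marker. The remaining tools are then M2, which enlarges or shrinks $J$ by an element commuting with the tail of $w$, and M4, which swaps a maximal spherical $K\subset J$ across the marker.

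Third, and this is where the real work lies, I would connect arbitrary irreducible spherical supports $J,J'\ni s$ by a sequence of such support-changing moves. This is the geometric content of Theorem~\ref{connecting markings} and Proposition~\ref{connecting 2}, which are proved in Sections~\ref{hierarchies}--\ref{section 2-spherical complete markings} via the Masur--Minsky hierarchy machinery: one fixes a main geodesic in $\Gamma$ joining $J$ to $J'$, resolves it into slices, and reads off a sequence of complete and semi-complete markings whose consecutive terms are related by a move. The chief obstacle, and the precise point where twist-rigidity enters, is the existence of such a hierarchy (Lemma~\ref{hierachies exist}): if $S$ admitted an elementary twist, the set $J_0\cup J_0^\bot$ would disconnect $\Gamma$ into components $A,B$ between which no support-changing sequence could pass. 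Twist-rigidity provides the ``room'' for a hierarchy to exist. A further subtlety is that resolving slices generically produces complete markings whose base is not spherical; this is precisely why Theorem~\ref{connected by moves} is phrased with the broader class of complete or semi-complete markings, and why Section~\ref{section 2-spherical complete markings} is needed to absorb the ``pop-up'' moves of type M4 that appear only after leaving the world of good markings.
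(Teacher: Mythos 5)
Your overall architecture matches the paper's: reduce to connecting good markings via the hierarchy/slice machinery and then deal separately with the problematic move. But the reduction step itself --- which is the actual content of the proof of Theorem~\ref{connected by moves}, since Theorem~\ref{connecting markings} and Proposition~\ref{connecting 2} are black boxes proved later --- has a genuine gap. You claim that for a fixed support $J$, markers are connected by M1 and M3, with adjacent markers ``directly linked by M1'' and ``most other pairs'' by M3. This fails in exactly the crucial case: if $\mu=((s,w),m)$ is complete and $\mu'=((s,w),m')$ is semi-complete but \emph{not} complete, with $m,m'$ adjacent (this is move N1, the output of the slice resolution), then M1 does not apply because it requires both markings to be complete, and M3 does not apply because $J\cup\{m\}\cup\{m'\}$ need not be $2$-spherical (the completeness of $\mu$ typically forces $m$ to be non-adjacent to part of $J$). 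This mixed case is not an exceptional leftover; it is the whole difficulty. The paper resolves it by passing from $\mu$ via move M2 to $\nu=((s,wm'),m)$, a complete marking with \emph{non-spherical} base (Remark~\ref{extending marking}(ii)), then invoking Proposition~\ref{connecting 2} to drive $\nu$ through complete markings to a semi-complete marking $\nu'$ whose support contains $J\cup\{m'\}$, and finally connecting $\nu'$ to $\mu'$ by M3. You mention that Section~\ref{section 2-spherical complete markings} exists to ``absorb pop-up moves of type M4,'' but you never wire Proposition~\ref{connecting 2} into the argument at the point where it is actually needed, so your proof of the reduction is incomplete.

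Two smaller inaccuracies: twist-rigidity is used not only for the existence of hierarchies (Lemma~\ref{hierachies exist}) but also, independently, in Lemma~\ref{avoiding shadow} inside the proof of Proposition~\ref{connecting 2}; and M4 is not a move between good markings (its source has non-spherical base), so it cannot serve as a ``support-changing move'' in your step connecting spherical supports $J,J'\ni s$ --- that connection is achieved entirely by N1, M2, M3 in Theorem~\ref{connecting markings}, with M4 appearing only at the end of the chains produced by Proposition~\ref{connecting 2}.
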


Observe that this is a statement concerning only the reference Davis
complex $\mathbb A_{\mathrm{ref}}$. The proof of
Theorem~\ref{connected by moves} consists of two pieces. The first
one is the following, which we prove in
Sections~\ref{hierarchies}--\ref{slices}.

\begin{defin}
Let $((s,w),m)$ and $((s,w'),m')$ be good markings with common core.
We say that they are related by \textbf{move N1} if $w=w'$ and $m$
and $m'$ are adjacent.
\end{defin}

\begin{thm}
\label{connecting markings} Let $(W,S),\mu, \mu'$ be as in Theorem
\ref{connected by moves}. Then there is a sequence, from $\mu$ to
$\mu'$, of good markings such that each two consecutive ones are
related by move N1, M2 or M3.
\end{thm}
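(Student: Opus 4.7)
The plan is to invoke the Masur--Minsky style hierarchy machinery outlined in the introduction and developed in Sections~\ref{hierarchies}--\ref{slices}. Given good markings $\mu$ and $\mu'$ with common core $s$, one associates to them a \emph{hierarchy} $H$: a system of geodesics in the Coxeter diagram of $(W,S)$, organised along a \emph{main geodesic} joining the supports of $\mu$ and $\mu'$, together with subordinate geodesics attached inductively at each vertex of the main geodesic. The theorem will follow by resolving $H$ into a linear sequence of \emph{slices}, each encoding a good marking with core $s$, and then reading off the three allowed moves as the elementary modifications that pass from one slice to the next.

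The first task is to establish the existence of such a hierarchy. The main geodesic will be chosen in a graph whose vertices are irreducible spherical subsets of $S$ containing $s$, with adjacency reflecting how a good base can be modified in one step; the subordinate geodesics will record how the marker and auxiliary data must evolve as one moves along the main geodesic. The existence of a hierarchy with a prescribed main geodesic is the content of Lemma~\ref{hierachies exist}, and this is one of the two places in the entire argument where the twist-rigidity of $S$ is genuinely used. Intuitively, twist-rigidity guarantees that the diagram of $(W,S)$ has ``enough room'' for the subordinate geodesics to be threaded coherently: if $S$ admitted an elementary twist $\tau$, then $S$ and $\tau(S)$ would be angle-compatible but non-conjugate, and the combinatorial navigation described below would be obstructed at such a twist.

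Once $H$ is in place, the second task is its resolution, carried out in Section~\ref{slices}. The resolution produces a finite sequence of slices, each of which is a coherent snapshot of the data determining a good marking; the extremal slices are arranged to coincide with $\mu$ and $\mu'$. The three types of moves have been defined precisely to match the elementary modifications that occur along such a resolution: switching a marker to an adjacent one while keeping the base fixed yields a move N1, extending or shortening the base by a single element $j$ adjacent to the common marker yields a move M2, and transitioning across a vertex of the main geodesic through a $2$-spherical neighbourhood of the relevant supports and markers yields a move M3.

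The main obstacle is the existence of the hierarchy itself, namely Lemma~\ref{hierachies exist}. Its proof will require a detailed combinatorial analysis of geodesics in the Coxeter diagram, together with the argument that no elementary twist disrupts the construction. After that, the key bookkeeping is to verify that every intermediate slice in the resolution really defines a \emph{good} marking (simple, with spherical base, and complete or semi-complete) and that the elementary transitions produced by the resolution fall into exactly the three categories N1, M2, M3 rather than demanding the auxiliary move M4; this last point explains why Theorem~\ref{connecting markings} can be stated purely in the world of good markings, postponing the appearance of M4 (and of general complete markings) to the subsequent Proposition~\ref{connecting 2} proved in Section~\ref{section 2-spherical complete markings}.
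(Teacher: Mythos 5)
You have correctly identified the strategy --- it is exactly the one the paper follows --- but what you have written is a plan, not a proof: every substantive step is deferred to the very results whose proofs constitute the argument. Concretely, three things are missing. First, the hierarchy must actually be defined and shown to exist. In the paper a geodesic with domain $J$ (an irreducible spherical subset containing $s$) is a geodesic path in the subgraph of the Coxeter diagram spanned by $S\setminus(J\cup J^\bot)$, decorated with initial and terminal markings, and the main geodesic has \emph{empty} domain with the single vertex $s$ and with $\mu,\mu'$ as its initial and terminal markings --- not, as you suggest, a path in a graph whose vertices are spherical subsets of $S$. Twist-rigidity enters not because a twist would ``obstruct navigation'' in some unspecified way, but for a precise reason: it guarantees that $S\setminus(J\cup J^\bot)$ is connected for every irreducible spherical $J$, so that whenever property (ii) of a hierarchy demands a geodesic with domain $J$ joining prescribed endpoint sets $K_I$ and $K_T$, a geodesic path between them exists. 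One also needs a finiteness argument (a uniform bound, by induction on $|D(k)|$, on the number of geodesics in a partial hierarchy) to obtain a maximal, hence complete, hierarchy.

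Second, the resolution into slices rests on genuine structural results about hierarchies which you do not address: that the backward and forward sequences $\Sigma^{-}(J)$ and $\Sigma^{+}(J)$ of a component domain form chains under $\swarrow$, the consequent uniqueness statement that a geodesic of $H$ is determined by its domain, and the fact that every component domain $J$ with $J\searrow f$ for some $f\in H$ also admits $b\swarrow J$ and hence is the domain of a geodesic of $H$. These are exactly what make ``every non-terminal slice has a unique successor'' true and force the successor sequence to run from the initial to the terminal slice. Third, one must verify case by case that the four successor configurations yield moves N1, M2, M2 and M3 respectively, and that the marking associated to a slice $(k,m)$ is good (which uses that $D(k)\cup\{m\}$ is non-spherical, so the marking is complete or semi-complete). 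None of this is routine, and none of it appears in your proposal, so as it stands the proposal does not establish the theorem.
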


The second ingredient is the following, which helps us to resolve
move N1. We prove it in Section~\ref{section 2-spherical complete
markings}.

\begin{prop}
\label{connecting 2} Let $S$ be a twist-rigid Coxeter generating set
for $W$. Let $\mu$ be a complete marking with
non-spherical base. Then there is a sequence of complete markings
from $\mu$ to a semi-complete (possibly not complete) marking
$\mu'$ with support $J'$ containing $J$ satisfying the following. Each two
consecutive markings in the sequence are related by move M1, M2 or M4, where move M4 may
appear only as the last one.
\end{prop}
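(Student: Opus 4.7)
My plan is to enlarge the support of the base via moves M1 and M2, culminating in a single M4 move producing the semi-complete marking $\mu'$ with $J' \supset J$.

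First, I observe that by Lemma~\ref{domain tree-2-spherical} every base has irreducible tree-$2$-spherical support, in particular irreducible $2$-spherical; hence every intermediate support along the sequence stays irreducible $2$-spherical, and remains non-spherical so long as it contains $J$. Regarding the final M4 step: it must take a complete marking $\tilde\mu=((s,\tilde w),\tilde m)$ with support $\tilde J$ to a semi-complete $\mu' = ((s,w'),m')$ with support $J' = \tilde J \setminus \{m'\}$. To ensure $J' \supset J$, I need $m' \in \tilde J \setminus J$, which in particular forces $\tilde J \supsetneq J$; moreover, for M4 to apply I must exhibit a maximal spherical $K \subset \tilde J$ satisfying $K \subset \tilde m^\bot$.

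The main work is thus to construct the intermediate sequence of M1 and M2 moves from $\mu$ to such a $\tilde\mu$. The strategy is to select a target element to be eventually extracted as $m'$, then iteratively apply M1 (shifting the marker to an adjacent one, while keeping both markings complete) and M2 (right-multiplying $w$ by an element $j$ adjacent to the current marker, while preserving the base and completeness conditions) so as to build up $\tilde w$ with the required support. Completeness at each intermediate step can be verified directly via Remark~\ref{extending base} and Remark~\ref{extending marking}, and the tree-$2$-spherical structure of $\tilde J$ forced by Lemma~\ref{domain tree-2-spherical} guides the identification of the maximal spherical subset $K$ and the verification that $K \subset \tilde m^\bot$ once the target $m'$ is chosen as a leaf of the tree on the opposite side from $\tilde m$.

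The main obstacle, and the only place where the twist-rigidity hypothesis enters, is to guarantee that this navigation actually succeeds — that the local constraints imposed by M1, M2 and the eventual M4-compatibility condition can be met simultaneously along some path. I expect this to be routed through a technical lemma in the spirit of the ``avoiding shadow'' lemma referenced in the outline, whose proof exploits twist-rigidity to rule out configurations in which every candidate extension is forbidden. Heuristically, a genuine obstruction would encode sufficient rigid local structure in the Coxeter diagram of $(W,S)$ to yield an elementary twist, contradicting twist-rigidity; this is the mechanism by which the hypothesis provides the ``enough space'' in the diagram referred to in the outline.
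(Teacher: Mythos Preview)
Your proposal has the right coarse shape --- navigate by M1/M2 and finish with a single M4, with twist-rigidity entering through a lemma like Lemma~\ref{avoiding shadow} --- but it is missing the actual engine of the paper's argument and contains a misconception about how the M4 step arises.

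The paper argues by \emph{backward induction on $\ell(w)$}: Remark~\ref{parallel wall theorem} (the Parallel Wall Theorem) bounds $\ell(w)$ uniformly, so the base case is vacuous. For the inductive step one introduces the \emph{shadow} $\widetilde J \subset J$ of the base $(s,w)$, namely those $j \in J$ with $d(wj.c_0,\mathcal Y_s) \le d(w.c_0,\mathcal Y_s)$; Lemma~\ref{Deodhar} shows $\widetilde J$ is spherical. Lemma~\ref{avoiding shadow} with $K=\widetilde J$ then gives a dichotomy. In case~(ii) one has $m \in \widetilde J^\bot$ and, by Lemma~\ref{small twist rigid}, $\widetilde J$ is a \emph{maximal} spherical subset of $J$; hence move M4 applies \emph{immediately} to $\mu$ itself, with $K=\widetilde J$ and $\mu'$ any marking satisfying $J' \cup \{m'\}=J$. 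In case~(i) there is a path $m=h_0,\dots,h_l$ in $S\setminus(\widetilde J\cup J^\bot)$ with $h_l\in J\setminus\widetilde J$; one performs M1 moves along it until reaching the first $h_i$ with $J\cup\{h_i\}$ $2$-spherical. If $((s,w),h_i)$ is complete it is already semi-complete and one stops; otherwise one performs M2 to pass to $((s,wh_i),h_{i-1})$, which has $\ell(wh_i)=\ell(w)+1$, and invokes the induction hypothesis.

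Your plan has no progress measure and no termination argument; the induction on $\ell(w)$ is precisely the missing spine. You also deduce from the literal statement that the pre-M4 support $\tilde J$ must strictly contain $J$; in fact the paper applies M4 in case~(ii) with $\tilde J = J$ and $J' \cup \{m'\} = J$ (what is actually used downstream, in the proof of Theorem~\ref{connected by moves}, is only that $J' \cup \{m'\} \supset J$). Finally, the maximal spherical $K$ needed for M4 is not produced by inspecting leaves of a tree as you suggest: it is the shadow $\widetilde J$, and its sphericality and maximality are exactly the content of Lemmas~\ref{Deodhar} and~\ref{small twist rigid}.
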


Notice that since $\mu$ and $\mu'$ are related by moves, they have
in particular the same core.

\medskip

We demonstrate how those two ingredients fit together to form the
following.

\begin{proof}[Proof of Theorem~\ref{connected by moves}]
By Theorem~\ref{connecting markings}, it suffices to prove
Theorem~\ref{connected by moves} under the assumption that good
markings $\mu$ and $\mu'$ are related by move N1. If both $\mu,
\mu'$ are semi-complete, then they are related by move M3. On the
other hand, if they are both complete, then they are related by move
M1. Hence without loss of generality we can restrict to the case where $\mu=((s,w),m)$
is complete and $\mu'=((s,w),m')$ is not complete.

By Remark~\ref{extending marking}(ii), the pair $\nu=((s,wm'),m)$ is
another complete marking and it is related to $\mu$ by move M2. The
base of $\nu$ is non-spherical.

Now we apply Proposition~\ref{connecting 2} to $\nu$. We obtain a
semi-complete marking $\nu'$, related to $\nu$ by a sequence of
moves M1, M2 and M4, with support containing $J\cup \{m'\}$, where
$J$ is the support of $\mu$. Hence $\nu'$ is related to $\mu'$ by
move M3.
\end{proof}

\begin{rem}
\label{removing general}
The hypothesis that $S$ is twist-rigid in both Theorem~\ref{connecting markings} and Proposition~\ref{connecting 2} can be weakened, see Remarks~\ref{removing assumptions hierarchies} and~\ref{removing assumptions hierarchies 2}.
\end{rem}
\section{Proof of the main theorem}
\label{choice of half-spaces}

In this section we deduce Theorem~\ref{reduced main} from
Theorem~\ref{well defined}. Before we do that we point out the
following, which does not require a proof (see Figure 1).

\begin{lemma}
\label{fusion} Let $\{s,s'\}\subset W$ be conjugate to some spherical non-commuting pair $\{r,r'\}\subset R$.
Suppose a wall $\mathcal{W}$ (in $\mathbb A_{\mathrm{amb}}$) intersects at least one of
$\mathcal{W}_s,\mathcal{W}_{s'}$ and none of $s\mathcal{W}_{s'},
s'\mathcal{W}_{s}$. Then the pair
$$\{s\Phi(s\mathcal{W}_{s'},\mathcal{W}),
s'\Phi(s'\mathcal{W}_{s},\mathcal{W})\}$$ is geometric. (If
$m_{s,s'}=3$, then we do not need the hypothesis that $\mathcal{W}$
intersects at least one of $\mathcal{W}_s,\mathcal{W}_{s'}$.)
\end{lemma}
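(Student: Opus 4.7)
The plan is to verify geometricity by exhibiting a fundamental-domain chamber inside the intersection of the two half-spaces. First I would rewrite the pair: using the identities $s\Phi(s\mathcal W_{s'},\mathcal W)=\Phi(\mathcal W_{s'},s\mathcal W)$ and $s'\Phi(s'\mathcal W_s,\mathcal W)=\Phi(\mathcal W_s,s'\mathcal W)$ (the latter half-spaces being well-defined since the disjointness hypotheses, conjugated by $s$ and $s'$, give $s\mathcal W\cap\mathcal W_{s'}=\emptyset$ and $s'\mathcal W\cap\mathcal W_s=\emptyset$), the pair in question becomes $\{\Phi(\mathcal W_{s'},s\mathcal W),\Phi(\mathcal W_s,s'\mathcal W)\}$. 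Because $\{s,s'\}$ is conjugate to the spherical pair $\{r,r'\}\subset R$, the conjugate of the fundamental chamber of $\langle r,r'\rangle$ yields a chamber $c$ of $\mathbb A_{\mathrm{amb}}$ carrying an $s$-panel on $\mathcal W_s$ and an $s'$-panel on $\mathcal W_{s'}$; such a $c$ is automatically a fundamental domain for $D=\langle s,s'\rangle$. It therefore suffices to show $c$ lies in both half-spaces (or, symmetrically, that the antipodal fundamental domain $w_Dc$ does, where $w_D$ is the longest element of $D$).

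Next, I would reformulate this as two side-conditions on $\mathcal W$. Applying $s$ to the containment shows that $c\in\Phi(\mathcal W_{s'},s\mathcal W)$ is equivalent to $\mathcal W$ and $sc$ lying on the same side of $s\mathcal W_{s'}$; similarly $c\in\Phi(\mathcal W_s,s'\mathcal W)$ is equivalent to $\mathcal W$ and $s'c$ lying on the same side of $s'\mathcal W_s$. Inspecting the $D$-orbit of $c$ shows that $sc$ is adjacent to both $\mathcal W_s$ and $s\mathcal W_{s'}$, hence occupies the $\pi/m$-sector between them, which is on the same side of $s\mathcal W_{s'}$ as $c$; likewise $s'c$ is on the $c$-side of $s'\mathcal W_s$. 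The task thus reduces to showing that $\mathcal W$ lies on the $c$-side of both $s\mathcal W_{s'}$ and $s'\mathcal W_s$, or on the opposite side of both.

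The key geometric input is a local analysis in a $D$-invariant 2-dimensional slice through a point of the fixed set $F$ of $D$. Model $\mathcal W_s,\mathcal W_{s'},s\mathcal W_{s'},s'\mathcal W_s$ as lines through the origin at angles $0,\pi/m,-\pi/m,2\pi/m$, with $c$ in the sector $(0,\pi/m)$. A short trigonometric check ($\sin$ of angular differences) shows that each of the four rays of $(\mathcal W_s\cup\mathcal W_{s'})\setminus F$ lies either on the $c$-side of both $s\mathcal W_{s'}$ and $s'\mathcal W_s$ or on the opposite side of both; the "mixed" combinations never occur. Now $\mathcal W$ is connected and, by the disjointness hypothesis, lies entirely in one of the four regions cut out by $s\mathcal W_{s'}$ and $s'\mathcal W_s$. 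The hypothesis that $\mathcal W$ meets $\mathcal W_s$ or $\mathcal W_{s'}$ puts some point of $\mathcal W$ on one of those four rays, pinning $\mathcal W$ to a non-mixed region: either $c$ lies in both half-spaces, or $w_Dc$ does, and in either case the pair is geometric. For $m_{s,s'}=3$ we have $s\mathcal W_{s'}=s'\mathcal W_s$, so there is only one side-condition and the mixed regions are vacuous; this is why the intersection hypothesis can then be dropped.

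The main obstacle I expect is the four-rays computation together with the passage from the local 2D slice through $F$ to the whole ambient complex: one needs the two rays of $\mathcal W_s\setminus F$ (and of $\mathcal W_{s'}\setminus F$) to be connected components that genuinely lie on a single side of each of $s\mathcal W_{s'}$ and $s'\mathcal W_s$, so that a single incidence point between $\mathcal W$ and one of the base walls really forces $\mathcal W$ into a single "quadrant" globally. This is clean once one invokes that two disjoint walls separate $\mathbb A_{\mathrm{amb}}$ into exactly two half-spaces and that walls are connected and convex, but the bookkeeping of the signs requires care.
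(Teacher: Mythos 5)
Your argument is correct; the paper in fact offers no written proof of this lemma, stating only that it ``does not require a proof (see Figure 1)'', and your analysis is precisely a careful elaboration of the two-dimensional wall configuration depicted there. The reduction to showing that a fundamental chamber $c$ for $\langle s,s'\rangle$ (or its antipode $w_Dc$) lies in both half-spaces, followed by the observation that the four rays of $(\mathcal W_s\cup\mathcal W_{s'})\setminus F$ avoid the two ``mixed'' quadrants cut out by $s\mathcal W_{s'}$ and $s'\mathcal W_s$, is exactly what the figure encodes, and the passage from the planar model to $\mathbb A_{\mathrm{amb}}$ that you flag is indeed routine (connectedness of walls plus the combinatorics of the $2m$ sectors around the fixed set, via Theorem~\ref{small Coxeter groups}).
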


\begin{figure}[h]
\includegraphics[width=7cm]{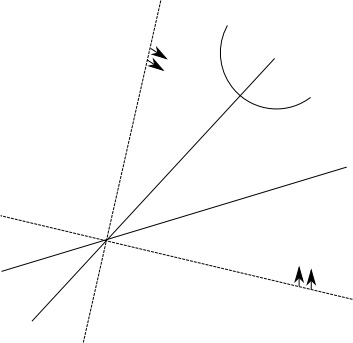}
\put(-27,143){$\mathcal W$} \put(-214,37){$\mathcal W_s$}
\put(-192,0){$\mathcal W_{s'}$} \put(-55,50){$\Phi(s\mathcal W_{s'},
\mathcal W)$} \put(-179,158){$\Phi(s'\mathcal W_{s}, \mathcal W)$}
\caption{Configuration of walls from Lemma~\ref{fusion}} \label{fig}
\end{figure}

\begin{proof}[Proof of Theorem~\ref{reduced main}] Without
loss of generality we may assume that $S$ is irreducible. If $S$ is
spherical, then the theorem follows from \cite[Proposition 11.7]{CM}.

Otherwise, since $S$ is non-spherical, by Lemma~\ref{good markings
exist} each $s\in S$ is a core of a good marking $\mu$. Hence we can
put $\Phi_s=\Phi_s^{\mu}$ and by Proposition~\ref{well defined} this
does not depend on the choice of $\mu$. It remains to prove the
following. We stress that we do not need to assume anymore that $S$
is twist-rigid.

\begin{prop}
\label{consistence} Let $S$ and $R$ be angle-compatible Coxeter
generating sets for $W$. Assume that $S$ is irreducible and
non-spherical, and let $s,s'\in S$. Suppose that there are
half-spaces $\Phi_s, \Phi_{s'}$ for $s,s'$ in $\mathbb
A_{\mathrm{amb}}$, such that for all good markings $\mu,\mu'$ with
respective cores $s,s'$ we have $\Phi_s=\Phi_s^{\mu}$ and
$\Phi_{s'}=\Phi_{s'}^{\mu'}$. Then the pair $\{\Phi_s, \Phi_{s'}\}$
is geometric.
\end{prop}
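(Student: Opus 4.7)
The strategy is to split the argument according to the Coxeter exponent $m=m_{s,s'}$, treating $s=s'$ separately.

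If $s=s'$, then $\Phi_s\cap\Phi_{s'}=\Phi_s$ is already a fundamental domain for $\la s\ra$. If $m_{s,s'}=2$, angle-compatibility implies that $\{s,s'\}$ is conjugate to a commuting pair in $R$, so that $\mathcal{W}_s$ and $\mathcal{W}_{s'}$ cross perpendicularly in $\mathbb A_{\mathrm{amb}}$; each of the four resulting quadrants $\Phi_s\cap\Phi_{s'}$ is a fundamental domain for $\la s,s'\ra\cong(\ZZ/2)^2$, so the pair is automatically geometric. If $m_{s,s'}=\infty$, consider the marking $\mu=((s,1),s')$: the base $(s,1)$ is trivially simple with spherical support $\{s\}$; the marker $s'$ lies in $S\setminus(\{s\}\cup\{s\}^\bot)$ since $s$ and $s'$ do not commute; and $\mu$ is complete because $\mathcal{Y}_{s'}$ and $\mathcal{Y}_s$ are disjoint (infinite dihedral). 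Thus $\mu$ is good, so by hypothesis $\Phi_s=\Phi(\mathcal{W}_s,\mathcal{W}_{s'})$, and symmetrically $\Phi_{s'}=\Phi(\mathcal{W}_{s'},\mathcal{W}_s)$. Since reflection-compatibility preserves the pattern of wall intersections, $\mathcal{W}_s$ and $\mathcal{W}_{s'}$ are disjoint in $\mathbb A_{\mathrm{amb}}$ as well, and the strip between them is a fundamental domain for the infinite dihedral action.

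The main case is $3\le m_{s,s'}<\infty$, which I would handle by applying Lemma~\ref{fusion} to an auxiliary wall of the form $\mathcal{W}=\mathcal{W}_m$. Since $S$ is irreducible and non-spherical and $\{s,s'\}\subsetneq S$ is spherical, there exists $m\in S\setminus(\{s,s'\}\cup\{s,s'\}^\bot)$ with $\{s,s',m\}$ non-spherical; choosing $m$ carefully, we can furthermore arrange that $\{m,ss's\}$ and $\{m,s'ss'\}$ are non-spherical, and, when $m_{s,s'}\ge 4$, that $\mathcal{W}_m$ meets $\mathcal{W}_s$ or $\mathcal{W}_{s'}$. Then $\mu=((s,s'),m)$ and $\mu'=((s',s),m)$ are simple complete markings with spherical base, hence good. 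By hypothesis, $\Phi_s=\Phi(\mathcal{W}_s,s'\mathcal{W}_m)$ and $\Phi_{s'}=\Phi(\mathcal{W}_{s'},s\mathcal{W}_m)$. Rewriting $s\Phi(s\mathcal{W}_{s'},\mathcal{W}_m)=\Phi(\mathcal{W}_{s'},s\mathcal{W}_m)$ and its analogue for $s'$ shows that these are precisely the half-spaces whose pair is guaranteed geometric by Lemma~\ref{fusion}.

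The hard part will be securing such a marker $m$: the combination of non-sphericity of $\{s,s',m\}$, $\{m,ss's\}$, $\{m,s'ss'\}$ together with the intersection requirement for $m_{s,s'}\ge 4$ may not be realizable by any element directly adjacent to $\{s,s'\}$ in the Coxeter diagram. In such situations the plan is to replace $s'$ in the base by a longer simple base word supported in a maximal irreducible spherical subset $K\supset\{s,s'\}$, pass to semi-complete markings via Corollary~\ref{2-sph corollary}, and repeat the Lemma~\ref{fusion}-style identification using a wall produced by that corollary. Throughout this main case, angle-compatibility, rather than mere reflection-compatibility, is essential: Lemma~\ref{fusion} uses it to match Coxeter exponents and to identify the correct fundamental sector of $\la s,s'\ra$ in $\mathbb A_{\mathrm{amb}}$.
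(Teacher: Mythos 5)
Your easy cases are handled correctly and agree with the paper: for $m_{s,s'}=\infty$ the markings $((s,1),s')$ and $((s',1),s)$ pin down $\Phi_s$ and $\Phi_{s'}$ exactly as in the paper, the commuting case is automatic, and your reduction of the case $3\le m_{s,s'}<\infty$ to Lemma~\ref{fusion} applied to $\mathcal W=\mathcal W_m$ via the good markings $((s,s'),m)$ and $((s',s),m)$ is precisely the paper's ``Claim'' in its simplest instance (base $\{s,s'\}$). The identity $s\Phi(s\mathcal W_{s'},\mathcal W)=\Phi(\mathcal W_{s'},s\mathcal W)$ and the translation of your non-sphericity conditions into the hypotheses of Lemma~\ref{fusion} are also correct.

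The genuine gap is exactly where you flag it: you never establish that a marker $m$ with the required properties exists, and your fallback plan is too vague to substitute for a proof. The paper closes this gap with three separate reductions, none of which appears in your proposal. First, for any $t\in S$ not adjacent to either $s$ or $s'$, it compares the half-space choices of the four markings $((s,s'),t)$, $((s,1),t)$, $((s',s),t)$, $((s',1),t)$ (all forced to agree by hypothesis) to conclude that $\mathcal W_t$ lies in the union $\Sigma$ of the two acute sectors between $\mathcal W_s$ and $\mathcal W_{s'}$, which settles the pair directly; this lets one assume $S=B(s)\cup B(s')$. Second, if some irreducible $2$-spherical non-spherical $J\ni s,s'$ exists, semi-complete markings and Corollary~\ref{2-sph corollary} settle the pair; so one may assume no such $J$ exists. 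Third, when $B(s)=B(s')$, a rank-one base $(s,s')$ genuinely does not suffice: the paper takes a maximal irreducible spherical $K\supset\{s,s'\}$, an $m\in S\setminus(K\cup K^\perp)$ (necessarily adjacent to both $s$ and $s'$ but not to some $t\in K$), and replaces $\{s,s'\}$ by the union of $\{s,s'\}$ with the $t$-component of the Dynkin diagram of $K\setminus\{s,s'\}$, arranging that $s$ or $s'$ is a leaf of the resulting tree so that Lemma~\ref{fusion} can be applied to the translated wall $j_2\cdots j_n\mathcal W_m$. Your sketch of ``pass to semi-complete markings and repeat'' does not engage with the actual obstruction (that $\{s,s',m\}$, or $\{m,ss's\}$, or $\{m,s'ss'\}$ may be spherical for every available $m$), so as written the main case is not proved.
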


\begin{proof} If $s$ and $s'$ are not adjacent, then we can consider
complete markings $\mu=((s,1),s'), \ \mu'=((s',1),s)$ and we obtain
$\Phi_s^{\mu}=\Phi (\mathcal{W}_s,\mathcal{W}_{s'})$,
$\Phi_{s'}^{\mu'}=\Phi (\mathcal{W}_{s'},\mathcal{W}_s)$, as
desired. Hence we may assume that $s,s'$ are adjacent. We may also
assume that they do not commute.

Denote the union of $\{s\}$ with the set of all elements from $S$
adjacent to $s$ by $B(s)$. If there is $t\in S$ outside $B(s)\cup
B(s')$, then we proceed as follows. Let $\Sigma$ be the union of the
two acute-angled sectors between $\mathcal{W}_s$ and
$\mathcal{W}_{s'}$. Since the choices of half-spaces coming from the
markings $((s,s'),t)$ and $((s,1),t)$ coincide, it follows that
$\mathcal{W}_t$ is contained in $\Sigma\cup s'\Sigma$. Analogously,
since the choices of half-spaces coming from the markings
$((s',s),t)$ and $((s',1),t)$ coincide, $\mathcal{W}_t$ is contained
in $\Sigma\cup s\Sigma$. Since we have $(\Sigma\cup s'\Sigma)\cap
(\Sigma\cup s\Sigma)=\Sigma$, we obtain $\mathcal{W}_t\subset
\Sigma$, and consequently $\{\Phi(\mathcal W_s, \mathcal W_t),
\Phi(\mathcal W_{s'},\mathcal W_t)\}$ is geometric, as desired. We
assume henceforth $S=B(s)\cup B(s')$.

Moreover, if there is an irreducible $2$-spherical but non-spherical
subset $J$ of $S$ containing $s$ and $s'$, then we take some maximal
irreducible spherical $K\subset J$ containing $s,s'$ and any $m\in
J\setminus (K\cup K^\bot)$. We consider semi-complete simple
markings $\mu, \mu'$ with support $K$, marker $m$, and cores $s,s'$.
The pair of half-spaces $\{\Phi^\mu_s, \Phi^{\mu'}_{s'}\}$ is
geometric by Corollary~\ref{2-sph corollary}. Hence we can assume
from now on that any irreducible $2$-spherical subset $J$ of $S$
containing $s$ and $s'$ is spherical.

\medskip\par\noindent\textbf{Claim.}\ignorespaces \
There exist complete simple markings $\mu=((s,s'j_2\ldots j_n),m)$,
$\mu'=((s',sj_2\ldots j_n), m)$ with common spherical support
$J=\{s,s',j_2,\ldots, j_n\}$ such that the common marker $m$
satisfies the following.

At least one of $s,s'$ commutes with $\{j_2,\ldots, j_n\}$ and its
invariant wall (\emph{i.e.}\ $\mathcal{W}_s$ or $\mathcal{W}_{s'}$)
intersects $\mathcal{W}_m$.
\medskip

Before we justify the claim, let us show how it implies the
proposition. We verify the hypothesis of Lemma~\ref{fusion} for
$\mathcal{W}=j_2\ldots j_n\mathcal{W}_m$. If, say, $s$ commutes with
$\{j_2,\ldots, j_n\}$ and $\mathcal{W}_s$ intersects
$\mathcal{W}_m$, then $\mathcal{W}_s$ also intersects $\mathcal{W}$.

On the other hand, since $\mu$ and $\mu'$ are complete,
$\mathcal{W}$ does not intersect $s\mathcal{W}_{s'}$ and
$s'\mathcal{W}_s$. Hence, by Lemma~\ref{fusion}, the pair formed by
$\Phi_{s}^\mu=\Phi(\mathcal{W}_s,
s'\mathcal{W})=s'\Phi(s'\mathcal{W}_{s}, \mathcal{W})$ and
$\Phi_{s'}^{\mu'}=\Phi(\mathcal{W}_{s'},
s\mathcal{W})=s\Phi(s\mathcal{W}_{s'}, \mathcal{W})$ is geometric.

\medskip

We now justify the claim. If $B(s)\neq B(s')$, then this is obvious,
we take $J=\{s,s'\}$ and $m$ outside $B(s)\cap B(s')$. Otherwise, we
pick a maximal irreducible spherical subset $K\subset S$ containing
$s,s'$ and an element $m\in S\setminus (K\cup K^\bot)$. By our
discussion $m$ is adjacent to both $s$ and $s'$, but not adjacent to
some $t\in K$. Let $J\subset K$ be the union of $\{s,s'\}$ with the
set of all vertices in the $t$ component of the Dynkin diagram of
$K\setminus \{s,s'\}$. Either $s$ or $s'$ is a leaf in the Dynkin
diagram of $J$. This implies the claim.
\end{proof}

\medskip
This ends the proof of Theorem~\ref{reduced main}. However, we still
need to prove Theorem~\ref{connecting markings} and Proposition
\ref{connecting 2}, which we do in the remaining sections.
\end{proof}

\section{Hierarchies}
\label{hierarchies}

Our goal for this and the next section is to prove Theorem
\ref{connecting markings}. First we need to assemble the
connectivity data of the Coxeter diagram of $(W,S)$, and we do it
via the \emph{hierarchy} formalism. This formalism was invented in a
different context by Masur--Minsky \cite[Section 4]{MM}. Where
convenient, we preserve the original names, notation and structure
of the exposition.

The core of all our markings, throughout this and the next section
is a fixed $s\in S$, and all markings are simple with spherical
bases. Hence any marking is uniquely determined by its irreducible
spherical support $J\ni s$ and its marker $m\in S\setminus (J\cup
J^\bot)$ (see Definition~\ref{defin of simple} and
Lemma~\ref{simple}). Hence we may allow ourselves to write $(J,m)$
instead of $((s,w), m)$. In this and the next section, the only
place where we will use the hypothesis that $S$ is twist-rigid, will
be in the proof of Lemma~\ref{hierachies exist}.

Below, a \textbf{path} in $T\subset S$ is a sequence of elements
from $T$ such that each two consecutive ones are adjacent. A path is
\textbf{geodesic} in $T$ if its length is minimal among paths in $T$
with the same endpoints.

\begin{defin}[{compare \cite[Definition 4.2]{MM}}]
Let $J\subset S$ be irreducible spherical. A \textbf{geodesic} $k$
with \textbf{domain} $J$ is a triple $$k=((k_0, \ldots k_n), I_k,
T_k),$$ where $(k_0,\ldots,k_n)$ is a geodesic path in $S\setminus
(J\cup J^\bot)$, with $n\geq 0$, and
$I_k=(J_{I_k},m_{I_k}),T_k=(J_{T_k},m_{T_k})$ are markings
satisfying the following.

We require that either $I_k=(J,k_0)$ or $I_k$ is good and $J\cup
\{k_0\}\subset J_{I_k}$. Similarly, we require that either
$T_k=(J,k_n)$ or $T_k$ is good and $J\cup \{k_n\}\subset J_{T_k}$.

We allow the domain $J$ to be the empty set but we then require
$n=0$ and we put $J^\bot=\emptyset$.

We denote the domain $J$ by $D(k)$. We call $k_i$ the
\textbf{vertices} (lying) on $k$, where $k_0$ is the \textbf{first}
vertex, $k_n$ is the \textbf{last} vertex, and $k_i, k_{i+1}$ are
\textbf{consecutive}. The \textbf{length} of $k$ equals $n$. We call
$I_k$ (resp.\ $T_k$) the \textbf{initial} (resp.\ the
\textbf{terminal}) marking of $k$. If $I_k=(J,k_0)$ (resp.\
$T_k=(J,k_n)$) we call it \textbf{trivial}.
\end{defin}

\begin{rem}[{compare \cite[Lemma 4.10]{MM}}]
\label{footprint} Let $k$ be a geodesic. Then for every spherical
subset $L\subset S$ there are at most two vertices from $L$ (lying)
on $k$, and if there are exactly two, then they are consecutive.
\end{rem}

\begin{defin}[{compare \cite[Definition 4.3]{MM}}]
Let $J\subset S$ be irreducible spherical. The set $J$ is a
\textbf{component domain} of a geodesic $b$ if for some $i$ we have
$D(b)\cup\{b_i\}=J$.

A component domain $J$ of a geodesic $b$ is \textbf{directly
subordinate backward} to $b$ (we denote this by $b\swarrow J$) if
$i>0$ or $I_b$ is not trivial.

A geodesic $k$ is \textbf{directly subordinate backward} to a
geodesic $b$ (we denote this by $b\swarrow k$) if
\begin{itemize}
\item
$b\swarrow D(k)$, \ and
\item
$I_k=
\begin{cases}
 (D(k),b_{i-1}) & \text{if } i>0,\\
 I_b & \text{if } i=0.
\end{cases}$
\end{itemize}

Analogously, a component domain $J=D(f)\cup\{f_i\}$ of a geodesic
$f$ of length $n$ is \textbf{directly subordinate forward} to $f$
(we denote this by $J\searrow f$) if $i<n$ or $T_f$ is not trivial.
A geodesic $k$ is \textbf{directly subordinate forward} to a
geodesic $f$ (we denote this by $k\searrow f$) if $D(k)\searrow f$,
and moreover $T_k= (D(k),f_{i+1})$ if $i<n$ or $T_k=T_f$ if $i=n$.
\end{defin}

\begin{defin}[{compare \cite[Definition 4.4]{MM}}]
\label{def hierarchy} A \textbf{hierarchy} is a set $H$ of geodesics
satisfying the following properties.
\begin{enumerate}[(i)]
\item
There is a distinguished \textbf{main} geodesic $g\in H$ with empty
domain (with a single vertex $s$ on $g$) and good initial and
terminal markings.
\item
For any irreducible spherical subset $J$ of $S$ with $b\swarrow
J\searrow f$, where $b,f\in H$, there is a unique geodesic $k\in H$
satisfying $D(k)=J$ and $b\swarrow k \searrow f$.
\item
For any geodesic $k\in H\setminus \{g\}$, there are geodesics $b,f\in H$ satisfying $b\swarrow k \searrow f$.
\end{enumerate}
\end{defin}

\begin{lemma}[{compare \cite[Theorem 4.6]{MM}}]
\label{hierachies exist} Assume that $S$ is twist-rigid. Then for any geodesic $g$ as in Definition
\ref{def hierarchy}(i), there is a hierarchy such that $g$ is its
main geodesic.
\end{lemma}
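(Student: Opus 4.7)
The plan is to build the hierarchy $H$ inductively, ordered by the size of the domain. I would initialise with $H = \{g\}$, and at each subsequent stage consider every pair $b, f \in H$ of already-constructed geodesics together with every irreducible spherical $J \subset S$ satisfying $b \swarrow J \searrow f$ and $|J| = |D(b)|+1$; for each such triple not yet witnessed in $H$, I would produce a new geodesic $k$ with $D(k) = J$ and $b \swarrow k \searrow f$, and insert it. Since $|D(k)|$ strictly exceeds $|D(b)|$ at each step and is bounded by the size of a maximal irreducible spherical subset of $S$, the procedure terminates after finitely many stages.

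Given a triple $(b, f, J)$, the subordinate relations uniquely determine the initial and terminal markings of $k$. Writing $J = D(b) \cup \{b_i\}$, one has $I_k = (J, b_{i-1})$ (pinning $k_0 = b_{i-1}$) when $i > 0$, and $I_k = I_b$ when $i = 0$, in which case $k_0$ must be chosen in $J_{I_b} \setminus (J \cup J^\bot)$; analogous rules dictate $T_k$ and $k_n$. The heart of the construction is then to exhibit a geodesic path inside $S \setminus (J \cup J^\bot)$ connecting an admissible $k_0$ to an admissible $k_n$, and this is precisely where the twist-rigidity of $S$ intervenes. By the very definition of an elementary twist, the assumption that $S$ admits none with respect to $J$ is equivalent to saying that $S \setminus (J \cup J^\bot)$ cannot be partitioned into two non-empty pieces $A$ and $B$ with no adjacency between $A$ and $B$. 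Hence the induced Coxeter subdiagram on $S \setminus (J \cup J^\bot)$ is adjacency-connected, so any two of its vertices are joined by a sequence of adjacent elements, from which I would extract one of minimal length to serve as the vertex sequence $(k_0, \ldots, k_n)$ of $k$.

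The axioms of Definition~\ref{def hierarchy} then follow: (i) holds by the choice of $g$; (ii) by the scheme of adding exactly one $k$ per valid triple $(b, f, J)$, yielding both existence and uniqueness; and (iii) because each $k \in H \setminus \{g\}$ is inserted together with the specific $b, f$ for which $b \swarrow k \searrow f$. The main obstacle I expect is the path-existence step above: it requires the candidates for $k_0$ and $k_n$ to lie in a common component of the adjacency graph on $S \setminus (J \cup J^\bot)$, and this is exactly the content of twist-rigidity. Without that hypothesis, the graph could split along some irreducible spherical $J$, forcing the forced markings $I_b, T_f$ to sit in incompatible components and making the construction of $k$ impossible, which explains why the assumption is essential here.
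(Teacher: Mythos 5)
Your proposal is correct and follows essentially the same route as the paper: both reduce the problem to producing, for each triple $(b,f,J)$ with $b\swarrow J\searrow f$, a connecting geodesic path inside $S\setminus(J\cup J^\bot)$ between the forced candidates for $k_0$ and $k_n$, with twist-rigidity supplying the connectedness of that set, and both establish finiteness by counting geodesics stratified by domain size. The only (cosmetic) difference is organisational — the paper takes a maximal partial hierarchy and shows it is a hierarchy, whereas you build stage by stage — and the paper additionally spells out the edge case where $J_{I_b}\setminus(J\cup J^\bot)$ is empty, in which the first vertex is taken to be the marker $m_{I_b}$.
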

\begin{proof}
We follow the proof in \cite{MM}. We call a set $H$ of geodesics
satisfying properties (i), (iii), and the uniqueness part of
property (ii) a \textbf{partial hierarchy}. The set of partial
hierarchies in which $g$ is the main geodesic is non-empty, since
$\{g\}$ is a partial hierarchy. We claim that there exists a maximal
partial hierarchy in which $g$ is the main geodesic.

To justify the claim, it is enough to bound uniformly (above) the number of geodesics in any such
partial hierarchy $H$. We bound by induction on $i$ the number of
geodesics with domain of cardinality $i$. For $i=0$ there is only
one such geodesic, since for any such geodesic $k\in H$ we have by property (iii)
a sequence $g=b^n\swarrow \ldots \swarrow b^1\swarrow b^0=k$ of
geodesics with increasing domains, which implies $n=0$ and $k=g$.

The number of geodesics with domain of cardinality $i+1$ is bounded
by the square of the number of geodesics with domain of cardinality
$i$ times the number of irreducible spherical subsets of $S$ of
cardinality $i+1$: indeed, by property (iii) for each geodesic $k\in
H$ there are $b,f\in H$ satisfying $b\swarrow k\searrow f$ and by the
uniqueness part of property (ii) $b,f$ and $D(k)$ determine $k$
uniquely.

This proves the claim that there exists a maximal partial hierarchy in which $g$ is the main
geodesic.
\medskip

Now we prove that a maximal partial hierarchy $H$ is already a
hierarchy. Otherwise we would have some irreducible spherical subset
$J\subset S$ and geodesics $b,f\in H$ satisfying $b\swarrow
J\searrow f$, but no geodesic $k\in H$ with $D(k)=J$ and $b\swarrow
k\searrow f$.

Suppose $J=D(b)\cup b_i$. If $i>0$, then we denote
$K_I=\{b_{i-1}\}$. Otherwise, we put $K_I=J_{I_b}\setminus (J\cup
J^\bot)$ if it is non-empty and $K_I=\{m_{I_b}\}$ otherwise.
Similarly, suppose $J=D(f)\cup f_{i'}$, where the length of $f$
equals $n'$. If $i'<n'$, then we denote $K_T=\{f_{i'+1}\}$.
Otherwise, we put $K_T=J_{T_f}\setminus (J\cup J^\bot)$ if it is
non-empty and $K_T=\{m_{T_f}\}$ otherwise.

Since $S$ is twist-rigid, there is a geodesic path $(k_j)$ from some
element of $K_I$ to some element of $K_T$ in $S\setminus (J\cup
J^\bot)$ (possibly of length $0$). We define $I_k=(J,b_{i-1})$ in
case $i>0$ and put $I_k=I_b$ otherwise. Similarly, we define
$T_k=(J,f_{i'+1})$ in case $i'<n'$ and put $T_k=T_f$ otherwise.

Hence we have constructed a geodesic $k=((k_j),I_k,T_k)$ with domain
$J$ satisfying $b\swarrow k \searrow f$. Thus $H\cup\{k\}$ is a
partial hierarchy, which contradicts maximality of $H$. This proves
that a maximal partial hierarchy is a hierarchy and ends the proof
of the lemma.
\end{proof}

Note that although we have assumed that $S$ does not admit any
elementary twist, we have only used the fact that $S$ does not admit
an elementary twist with $J$ containing the fixed element $s$ of
$S$.

From now on, throughout this and the next section, we assume that we
are given a hierarchy $H$ with main geodesic $g$ as in the assertion
of Lemma~\ref{hierachies exist}.

\begin{defin} [{compare \cite[Section 4.3]{MM}}]
Let $J$ be a component domain. Then its \textbf{backward sequence}
is
$$\Sigma^-(J)=\{k\in H \colon D(k)\subset J \text{ and we have that } I_k \text{ is good or } m_{I_k}\notin J\}.$$
Its \textbf{forward sequence} is
$$\Sigma^+(J)=\{k\in H \colon D(k)\subset J \text{ and we have that } T_k \text{ is good or } m_{T_k}\notin J\}.$$
\end{defin}

\begin{lemma}[{compare \cite[Lemma 4.12]{MM}}]
\label{backward sequence} We have $\Sigma^-(J)=\{b^i\}_{i=0}^n$,
where the $b^i$ form a sequence $g=b^n\swarrow \ldots \swarrow b^0$.
If for some $b\in \Sigma^-(J)$ all the vertices on $b$ are outside
$J$, then $b=b^0$. An analogous statement holds for $\Sigma^+(J)$.
\end{lemma}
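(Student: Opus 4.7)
The plan is to adapt the argument of \cite[Lemma 4.12]{MM}, proceeding in three steps: (a) construct a descending chain from any $k\in\Sigma^-(J)$ down to $g$; (b) verify canonicity so that the chain exhausts $\Sigma^-(J)$; (c) handle the final clause about geodesics whose vertices all lie outside $J$.

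For step (a), I start with $k \in \Sigma^-(J)$ and set $b^0 = k$. Whenever $b^i \neq g$, property (iii) of Definition~\ref{def hierarchy} supplies $b^{i+1} \in H$ with $b^{i+1} \swarrow b^i$, and this forces $D(b^i) = D(b^{i+1}) \cup \{b^{i+1}_j\}$ for some vertex $b^{i+1}_j$ of $b^{i+1}$. The cardinalities $|D(b^i)|$ strictly decrease, so the chain terminates; the terminal element must be $g$, since a geodesic with empty domain distinct from $g$ would itself demand a $\swarrow$-ancestor, which is impossible for an empty domain. To check $b^i \in \Sigma^-(J)$, the inclusion $D(b^i) \subset J$ is immediate, while the condition on $I_{b^i}$ divides into two cases according to the position $j$: if $j = 0$, then $I_{b^i} = I_{b^{i+1}}$ and the property passes up by induction from $I_k$; if $j > 0$, then $I_{b^i}$ is trivial with marker $b^{i+1}_{j-1}$, and Remark~\ref{footprint} applied to the irreducible spherical $J$ and the geodesic $b^{i+1}$ constrains the positions of $J$-vertices on $b^{i+1}$, forcing the canonical choice of $b^{i+1}$ so that $b^{i+1}_j$ is the first vertex of $b^{i+1}$ in $J$ and consequently $b^{i+1}_{j-1} \notin J$.

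For step (b), canonicity follows from the uniqueness clause of property (ii) combined with the first-$J$-vertex rule identified in (a): at each level of the chain, the $\swarrow$-descendant lying in $\Sigma^-(J)$ is pinned down by the domain cardinality, the backward neighbour, and the position of the newly added $J$-vertex. An induction on the domain cardinality then matches any two chains built from starting points in $\Sigma^-(J)$, showing that $\Sigma^-(J) = \{b^i\}_{i=0}^n$ is a single linearly ordered chain. For step (c), if $b \in \Sigma^-(J)$ has every vertex outside $J$, then for any vertex $b_j$ of $b$ we have $D(b) \cup \{b_j\} \not\subset J$; no $\swarrow$-descendant of $b$ can therefore have domain inside $J$, so $b$ is maximal in $\Sigma^-(J)$ under $\swarrow$ and equals $b^0$. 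The statement for $\Sigma^+(J)$ is symmetric, with $\searrow$ replacing $\swarrow$ throughout.

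The main technical obstacle I anticipate is the initial-marking case analysis in step (a): ruling out the configuration where both $b^{i+1}_{j-1}$ and $b^{i+1}_j$ lie in $J$ requires correlating the freedom in the choice of $b^{i+1}$ furnished by property (iii) with the $\Sigma^-(J)$-membership condition on $I_{b^i}$, using Remark~\ref{footprint} together with the structural constraints on markers built into the definition of a geodesic.
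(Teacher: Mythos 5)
Your steps (a) and (c) are essentially the paper's argument: (a) reproduces the closure of $\Sigma^-(J)$ under passing to $\swarrow$-ancestors (with the same use of Remark~\ref{footprint} to push the ``first vertex outside $J$'' condition up the chain), and (c) is exactly the paper's observation that if $b\swarrow k$ with $k\in\Sigma^-(J)$ then the vertex in $D(k)\setminus D(b)$ lies on $b$ and belongs to $J$. However, step (b) has a genuine gap. The uniqueness clause of property (ii) of Definition~\ref{def hierarchy} pins down a geodesic $k$ only after \emph{both} a backward subordinate $b\swarrow k$ \emph{and} a forward subordinate $k\searrow f$ have been specified: it asserts uniqueness of $k$ with $D(k)=J'$ and $b\swarrow k\searrow f$ for a given pair $(b,f)$. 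The data you list --- domain cardinality, backward neighbour, and the position of the newly added $J$-vertex --- determine $D(k)$ and $I_k$, but say nothing about $T_k$ or the geodesic path itself; two distinct $k,k'\in\Sigma^-(J)$ with $b\swarrow k$ and $b\swarrow k'$ and $D(k)=D(k')$ could a priori have different forward subordinates $f\neq f'$, and nothing in your argument rules this out.

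The paper closes this gap by proving the $\Sigma^-(J)$ and $\Sigma^+(J)$ statements \emph{simultaneously} by induction on $|J|$. In the uniqueness step, after showing $D(k)=D(b)\cup\{j\}=D(k')$ (where $j$ is the $J$-vertex on $b$ nearest $b_0$), one splits: if $|D(k)|<|J|$, then $k,k',b$ all lie in $\Sigma^-(D(k))$ and the induction hypothesis for the smaller set $D(k)$ forces $k=k'$; if $D(k)=J$, then $k,k'\in\Sigma^+(J)$, and the (inductively available) chain structure of $\Sigma^+(J)$ produces a \emph{common} $f$ with $k\searrow f$ and $k'\searrow f$, at which point --- and only at this point --- the uniqueness clause of property (ii) applies. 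Your proposal relegates the $\Sigma^+$ statement to a closing remark about symmetry, whereas it is an indispensable ingredient in the proof of the $\Sigma^-$ statement itself. To repair step (b) you would need to set up this joint induction (or supply some other mechanism for identifying the forward subordinates of $k$ and $k'$).
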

\begin{proof}
We follow again \cite{MM}. For the first assertion, since for every
geodesic $k\in H$ we have a sequence $g=b^n\swarrow \ldots \swarrow
b^0=k$, it is enough to prove the following.
\begin{enumerate}[(i)]
\item
If $k\in \Sigma^-(J)$ and $b\swarrow k$, then $b\in \Sigma^-(J)$.
\item
If $k,k'\in \Sigma^-(J)$ and $b\swarrow k,\ b\swarrow k'$, then
$k=k'$.
\end{enumerate}

(i) Let $k\in \Sigma^-(J)$ and $b\swarrow k$. Then $D(b)\subset
D(k)\subset J$. If $I_b$ is trivial, then so is $I_k$ and
$m_{I_k}\notin J$ is a vertex on $b$ which precedes the unique
element of $D(k)\setminus D(b)\subset J$. By Remark~\ref{footprint}
we have $m_{I_b}\notin J$. Hence $b\in \Sigma^-(J)$.

(ii) We prove this assertion together with the analogous one for
$\Sigma^+(J)$ by induction on $|J|$. Suppose $k,k'\in \Sigma^-(J)$
and $b\swarrow k,\ b\swarrow k'$. If $|J|=1$, \emph{i.e.}\ if
$J=\{s\}$, then $b=g$ and $k\searrow b, \ k'\searrow b$, hence by
the uniqueness part of property (ii) of a hierarchy we have $k=k'$.
Otherwise, let $j$ be the vertex from $J$ on $b$ nearest to $b_0$.
Since $k,k'\in \Sigma^-(J)$, we have $D(k)=D(b)\cup \{j\}=D(k')$. If
$|D(k)|<|J|$, then $k,k',b$ belong to $\Sigma^-(D(k))$ and we have
$k=k'$ by the induction hypothesis. Otherwise we have $J=D(k)$,
hence $k,k'\in \Sigma^+(J)$. By the induction hypothesis, by the
analogous statement for $\Sigma^+(J)$, we have $f\in H$ with
$k\searrow f, k'\searrow f$. By the uniqueness part of property (ii)
of a hierarchy we obtain $k=k'$.

\smallskip

For the second assertion note that if we have $b\swarrow k$ and
$k\in \Sigma^-(J)$, then the unique element of $D(k)\setminus D(b)$
which is a vertex on $b$ belongs to $J$.
\end{proof}

\begin{cor}[{compare \cite[Lemma 4.15]{MM}}]
\label{uniqueness} Let $J$ be a component domain with $H\ni
b\swarrow J$. Then $b$ is uniquely determined by $J$. Analogously,
if $J\searrow f\in H$, then $f$ is uniquely determined by $J$. In
particular, if $k\in H$, then $D(k)$ uniquely determines $k$.
\end{cor}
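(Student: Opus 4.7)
The plan is to derive the uniqueness of $b$ (and, symmetrically, of $f$) directly from Lemma~\ref{backward sequence}, and then to deduce the final clause from properties (ii) and (iii) of a hierarchy.

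The first step is to verify that any $b\in H$ satisfying $b\swarrow J$ automatically lies in $\Sigma^-(J)$. Writing $J=D(b)\cup\{b_i\}$, the inclusion $D(b)\subsetneq J$ is immediate. To check the marking condition, I would split into two cases. If $I_b$ is non-trivial, then by the definition of a geodesic it is good, so $b\in\Sigma^-(J)$. If $I_b$ is trivial, then the defining clause of $\swarrow$ forces $i>0$; in this case $m_{I_b}=b_0$ is a vertex of the geodesic path of $b$ that is distinct from $b_i$ and disjoint from $D(b)$, hence lies outside $J=D(b)\cup\{b_i\}$, and again $b\in \Sigma^-(J)$.

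With this in place, I would invoke Lemma~\ref{backward sequence} to present $\Sigma^-(J)=\{b^j\}_{j=0}^{n}$ with $g=b^n\swarrow\cdots\swarrow b^0$. Because each $\swarrow$-step enlarges the domain by exactly one element and $D(g)=\emptyset$, we have $|D(b^j)|=n-j$, so the cardinality of the domain determines the index. Now $b\swarrow J$ forces $|D(b)|=|J|-1$, and this singles out a unique index in the sequence; hence $b$ is uniquely determined by $J$. The argument for $f$ with $J\searrow f$ is strictly symmetric, using the forward analogue of Lemma~\ref{backward sequence}.

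For the final assertion, consider $k\in H$ with $D(k)=J$. If $J=\emptyset$, then $k=g$: indeed, any $k'\ne g$ admits some $b\in H$ with $b\swarrow k'$ by property (iii), and this relation requires $D(b)\subsetneq D(k')$, which rules out $D(k')=\emptyset$. If $J\ne\emptyset$, then property (iii) yields $b,f\in H$ with $b\swarrow k\searrow f$; the backward and forward uniqueness just established show that $b$ and $f$ depend only on $J$, and the uniqueness clause of property (ii) in the definition of a hierarchy then pins down $k$ itself. The only delicate point in the whole argument is the trivial-$I_b$ subcase of Step~1, where membership in $\Sigma^-(J)$ hinges precisely on the clause ``$i>0$ or $I_b$ not trivial'' built into the definition of $\swarrow$; all the remaining work is a cardinality count inside the linearly ordered family provided by Lemma~\ref{backward sequence}.
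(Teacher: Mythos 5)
Your proof is correct and follows essentially the same route as the paper: deduce $b\in\Sigma^-(J)$ from $b\swarrow J$, use Lemma~\ref{backward sequence} so that $|D(b)|=|J|-1$ pins down $b$ in the linearly ordered backward sequence, and then get the last clause from the uniqueness part of property (ii). Your only additions are to spell out the verification that $b\swarrow J$ implies $b\in\Sigma^-(J)$ (which the paper asserts without detail) and the $D(k)=\emptyset$ case; both are handled correctly.
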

\begin{proof} If $b\swarrow J$, then $b\in \Sigma^-(J)$ and by Lemma
\ref{backward sequence} the cardinality of $D(b)$ determines $b$
uniquely. The last statement follows from the uniqueness part of
property (ii) of a hierarchy.
\end{proof}

\begin{prop}[{compare \cite[Lemma 4.21 and Theorem 4.7(3)]{MM}}]
\label{component domains are domains} Let $J$ be a component domain.
If $J\searrow f\in H$, then there is $b\in H$ with $b\swarrow J$
(and \emph{vice versa}). In particular, there is $k\in H$ with
$D(k)=J$.
\end{prop}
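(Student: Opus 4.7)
My plan is to reduce the ``in particular'' clause to the biconditional via property~(ii) of a hierarchy and to prove the forward-to-backward implication by induction on $|J|$, exploiting the analogue of Lemma~\ref{backward sequence} for the forward sequence $\Sigma^+(J)$. The biconditional is symmetric under swapping $\swarrow$ and $\searrow$, so I focus on: if $J\searrow f\in H$, then there exists $b\in H$ with $b\swarrow J$. The base case $|J|=1$ forces $J=\{s\}$, and the main geodesic $g$ works because its unique vertex is $s$ and $I_g$ is good, hence non-trivial.

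For the inductive step I would invoke the analogue of Lemma~\ref{backward sequence} to obtain the forward sequence $\Sigma^+(J)=\{f^i\}_{i=0}^m$, arranged as a chain $f^0\searrow\cdots\searrow f^m=g$. A routine marker check (distinguishing the cases $i<n$ and $i=n$ with $T_f$ non-trivial) shows $f\in\Sigma^+(J)$, so the sequence is non-empty. The heart of the argument is the identity $D(f^0)=J$. Granting it, $f^0\neq g$ (since $s\in J\neq\emptyset$), and property~(iii) yields $b\in H$ with $b\swarrow f^0$; writing $D(f^0)=D(b)\cup\{b_k\}$ gives $J=D(b)\cup\{b_k\}$, and the initial-marking compatibility packaged in $b\swarrow f^0$ transfers immediately to $b\swarrow J$.

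The main obstacle is to prove $D(f^0)=J$. I would argue by contradiction, assuming $D(f^0)\subsetneq J$. Using the analogue of the second assertion of Lemma~\ref{backward sequence} together with the fact that $f$ carries a vertex $f_i\in J$, one shows that $f^0$ itself must carry some vertex $f^0_k\in J$. Setting $J''=D(f^0)\cup\{f^0_k\}\subseteq J$, which is irreducible spherical since $f^0_k\notin D(f^0)\cup D(f^0)^\bot$, one checks $J''\searrow f^0$: the alternative $k=n$ with $T_{f^0}$ trivial would force $m_{T_{f^0}}=f^0_n=f^0_k\in J$, contradicting $f^0\in\Sigma^+(J)$. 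If $J''\subsetneq J$, then $|J''|<|J|$ and the inductive hypothesis supplies $b''\in H$ with $b''\swarrow J''$; property~(ii) produces $h''\in H$ with $D(h'')=J''$ and $b''\swarrow h''\searrow f^0$, and another marker check places $h''\in\Sigma^+(J)$ with $|D(h'')|>|D(f^0)|$, contradicting maximality of $f^0$.

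The delicate edge case is $J''=J$, i.e.\ $D(f^0)$ is one element short of $J$; I expect this to be where the main technical care is needed. In this case $J$ is a component domain of $f^0$ itself with $J\searrow f^0$, and I would apply property~(iii) directly to $f^0$, obtaining $b\in H$ with $b\swarrow f^0$, and analyse the initial-marking condition defining $\swarrow$: when $k>0$ or $I_{f^0}$ is non-trivial the geodesic $f^0$ already witnesses $f^0\swarrow J$ and we are done; otherwise the trivial-initial-marking matching forces the index $l$ in $D(f^0)=D(b)\cup\{b_l\}$ to satisfy $l>0$ and identifies $f^0_k$ as the vertex $b_{l-1}$ of $b$, so Remark~\ref{footprint} places the consecutive vertices $b_{l-1},b_l$ of $b$ inside $J$; the component domain $D(b)\cup\{b_{l-1}\}\subsetneq J$ then re-enters the inductive scheme, and finitely many iterations up the backward tree of $f^0$ (terminating at a geodesic with good initial marking, or at $g$) conclude the argument.
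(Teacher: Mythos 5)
Your main line — take the maximal-domain element $f^0$ of $\Sigma^+(J)$, show by contradiction (via the inductive hypothesis, property~(ii), and a marker check using the \emph{last} $J$-vertex on $f^0$) that $D(f^0)=J$, then apply property~(iii) — is sound and is the mirror image of the paper's argument. The paper instead takes the maximal element $k$ of the \emph{backward} sequence $\Sigma^-(J)$, and this choice is not cosmetic: the membership condition for $\Sigma^-(J)$ (``$I_k$ good or $m_{I_k}\notin J$'') is exactly what certifies $k\swarrow D(k)\cup\{k_i\}$ for the first $J$-vertex $k_i$ on $k$, so in the borderline case $D(k)\cup\{k_i\}=J$ the paper is immediately done. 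Because you work with $\Sigma^+(J)$, whose membership condition only certifies $\searrow$ relations, your borderline case $J''=J$ (equivalently $f^0=f$, with $i=0$ and $I_f$ trivial) hands you back $J\searrow f$ — the hypothesis — rather than the backward relation you need, and this is where your proposal has a genuine gap.

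Concretely, in that edge case your argument produces $b\swarrow f^0$ with $D(f^0)=D(b)\cup\{b_l\}$, $l>0$ and $b_{l-1},b_l\in J$, and then asserts that the component domain $D(b)\cup\{b_{l-1}\}\subsetneq J$ ``re-enters the inductive scheme'' and that ``finitely many iterations up the backward tree'' conclude. This is not a proof. If you run the inductive hypothesis on $J_1=D(b)\cup\{b_{l-1}\}$ and use property~(ii), you obtain $h_1\in H$ with $D(h_1)=J_1$ and $h_1\searrow b$, whence $b_l$ is the last vertex on $h_1$ and $J=D(h_1)\cup\{b_l\}$; but $h_1\swarrow J$ fails precisely when $h_1$ has length $0$ and $I_{h_1}$ is trivial, which reproduces the original obstruction one level down with a domain of the \emph{same} cardinality $|J|-1$ (only the omitted element of $J$ changes). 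You give no invariant that decreases, no reason the process cannot cycle among the sets $J\setminus\{x\}$, and no statement of what is concluded at a geodesic ``with good initial marking.'' The clean repair is the paper's: run the whole induction on the maximal element of $\Sigma^-(J)$ (using $f\in\Sigma^+(J)$ only to force $D(k)=J$ in the subcase where $k$ carries no vertex of $J$), so that the problematic branch terminates with $k\swarrow J$ directly from the definition of $\Sigma^-(J)$.
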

\begin{proof}
The second assertion follows from the existence part of property
(ii) of a hierarchy.

We prove Proposition~\ref{component domains are domains} by
induction on $|J|$. The case where $|J|=1$ is immediate. Otherwise,
let $k$ be the element of $\Sigma^-(J)$ with the largest domain.
First assume that there is a vertex from $J$ on $k$ and let $k_i\in
J$ be such a vertex with the least index $i$. We have $k\swarrow
D(k)\cup \{k_i\}$, hence we are done if $D(k)\cup \{k_i\}=J$
(actually, this cannot happen, because then the geodesic with
support $J$ is also in $\Sigma^-(J)$). Otherwise, we apply the
inductive hypothesis to $D(k)\cup \{k_i\}\subsetneq J$. We obtain a
geodesic $h\in H$ with domain $D(k)\cup \{k_i\}$ satisfying
$k\swarrow h$. Then we have $h\in \Sigma^-(J)$, which contradicts
the choice of $k$.

Now we consider the case where all the vertices on $k$ are outside
$J$. Then we have $k\in \Sigma^+(J)$ and, by Lemma~\ref{backward
sequence} applied to $\Sigma^+(J)$, the geodesic $k$ has the largest
domain among the geodesics in $\Sigma^+(J)$. Since we have $f\in
\Sigma^+(J)$, $|D(f)|=|J|-1$, and $k\neq f$, it follows that $D(k)$
equals $J$. By property (iii) of a hierarchy there is $b\in H$ with
$b\swarrow D(k)$.
\end{proof}

\section{Slices}
\label{slices}

In this section we show how to resolve a hierarchy into a sequence
of \emph{slices}, compare \cite[Section 5]{MM}. The slices give rise
to good markings related by moves N1, M2 and M3 and we conclude with
the proof of Theorem~\ref{connecting markings}.

We assume we are given a fixed hierarchy $H$ with main geodesic $g$
with a single vertex $s$. All markings and notation are as in the
previous section.

\begin{defin}
A \textbf{slice} is a pair $(k,m)$, where $k\in H$ is a geodesic and
$m$ is a vertex on $k$ such that $D(k)\cup \{m\}$ is non-spherical.

The marking \textbf{associated} to the slice $(k,m)$ is the pair
$(D(k),m)$. By Remark~\ref{extra remark} this marking is good.

We define the \textbf{initial slice} in the following way. Let
$k^0=g$. For $i\geq 0$, while the first vertex $k^i_0$ on the
geodesic $k^i$ does not equal $m_{I_{k^i}}=m_{I_g}$, we define
$k^{i+1}$ to be the geodesic in $H$ whose domain is $D(k^i)\cup
\{k^i_0\}$ --- its existence is guaranteed by Theorem~\ref{component
domains are domains} and its uniqueness by
Corollary~\ref{uniqueness}. The initial slice is the last geodesic
$k$ of this sequence together with its first vertex $m=m_{I_g}$.
Analogously we define the \textbf{terminal slice}.
\end{defin}

\begin{rem}
\label{associated markings} The good marking associated to the
initial slice equals $I_g$ (the initial marking of the main
geodesic). The marking associated to the terminal slice equals
$T_g$.
\end{rem}

\begin{defin}
\label{consecutive slices} We say that the slice $(k',m')$ is a
\textbf{successor} of the slice $(k,m)$ if we have one of the
following configurations:
\begin{enumerate}[(i)]
\item
$k=k'$ and $m,m'$ are consecutive vertices on $k$, or
\item
$k\swarrow k'$ and $m'=m$ is the first vertex on $k'$, or
\item
$k\searrow k'$ and $m=m'$ is the last vertex on $k$, or
\item
there is $h\in H$ satifying $k\searrow h \swarrow k'$,\ $m$ is the
last vertex on $k$, $m'$ is the first vertex on $k'$, and $m',m$ are
consecutive vertices on $h$.
\end{enumerate}
\end{defin}

\begin{rem}
\label{ends end} The terminal slice has no successor. The initial
slice is not a successor of any slice.
\end{rem}

\begin{thm}
\label{order good} For each slice which is not the terminal slice
there exists a unique successor. Each slice which is not the
initial slice is a successor of a unique other slice.
\end{thm}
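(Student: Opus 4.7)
The plan is to prove existence and uniqueness of the successor; the predecessor statement is dual (reverse arrows, swap first/last). Fix a slice $(k,m)$ that is not the terminal one, and write $m=k_i$; the case split is on whether $i<n$ or $i=n$.

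Suppose first that $i<n$, and set $m'':=k_{i+1}$ and $J:=D(k)\cup\{m''\}$. Since $m''$ is a vertex on $k$ it does not lie in $D(k)^{\bot}$, so $J$ is irreducible. If $J$ is non-spherical then $(k,m'')$ is a slice and is the successor via case (i) of Definition~\ref{consecutive slices}. Otherwise $J$ is an irreducible spherical component domain of $k$ with $k\swarrow J$ (the condition $i+1\geq 1$ is used here), so by Proposition~\ref{component domains are domains} there is a unique $k'\in H$ with $D(k')=J$ and $k\swarrow k'$; the $\swarrow$-condition forces $I_{k'}=(J,k_i)=(J,m)$, which is trivial, so $k'_0=m$. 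Since $D(k')\cup\{m\}\supseteq D(k)\cup\{m\}$ is non-spherical, $(k',m)$ is a slice and is the successor via case (ii). Cases (i) and (ii) are mutually exclusive here: (i) requires $J$ to be non-spherical, while the geodesic $k'$ needed for (ii) exists (by Proposition~\ref{component domains are domains}) precisely when $J$ is spherical.

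Suppose now that $i=n$. Then $k\neq g$ (otherwise $D(k)\cup\{m\}=\{s\}$ would be spherical, contradicting the slice condition), so property (iii) of a hierarchy provides $h\in H$ with $k\searrow h$; write $D(k)=D(h)\cup\{h_j\}$. If $D(h)\cup\{m\}$ is non-spherical, then $(h,m)$ is a slice and is the successor via case (iii). Otherwise, using that $D(h)\cup\{m\}$ is then a component domain and combining the $\searrow$- and $\swarrow$-relations with the form of $T_k$ (which specifies how $m=k_n$ sits on $h$), one produces a sibling $k'\in H$ with $h\swarrow k'$ and a vertex $m'$ on $h$ consecutive to $m$ which is the first vertex of $k'$; the configuration fits case (iv), and $(k',m')$ is verified to be a slice. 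That exactly one of the two subcases applies, and that the candidate is indeed a slice, is checked by tracing the iterative descent that defines the terminal slice and using that $(k,m)$ is not this terminal slice.

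For uniqueness we observe that cases (i)--(ii) occur only when $m$ is not the last vertex of $k$, while (iii)--(iv) occur only when it is; within each pair the distinguishing condition above (sphericity of $J$, respectively of $D(h)\cup\{m\}$) determines a unique case. Once the case is fixed, Corollary~\ref{uniqueness} pins down the geodesic involved via its domain, and the marker is then forced by the geodesic structure. The main technical obstacle is the case $i=n$: teasing apart cases (iii) and (iv), producing $h$, $k'$, and $m'$ explicitly, and verifying the slice condition for the candidate successor, all require careful manipulation of the $\swarrow$- and $\searrow$-relations and of the specific form of the terminal marking $T_k$.
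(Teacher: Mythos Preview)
Your overall strategy matches the paper's proof: split on whether $m$ is the last vertex of $k$, and in each case produce the successor according to the appropriate configuration of Definition~\ref{consecutive slices}. Your treatment of the case $i<n$ is correct and essentially identical to the paper's.

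The case $i=n$ is where your argument has a genuine gap, which you yourself flag as ``the main technical obstacle'' without resolving. Your case split there is on whether $D(h)\cup\{m\}$ is spherical, but both branches tacitly assume that $m$ is a \emph{vertex on} $h$: this is needed for $(h,m)$ to be a slice in case (iii), and for $D(h)\cup\{m\}$ to be a component domain of $h$ (so that Proposition~\ref{component domains are domains} applies) in case (iv). This is not automatic. The paper handles it by making the primary split on whether $m$ lies on $h$. If it does---which happens exactly when $T_k$ is trivial, forcing $m=k_n$ to be the vertex $h_{j+1}$ immediately following the unique element $h_j$ of $D(k)\setminus D(h)$---one then passes to your sphericity dichotomy and obtains configuration (iii) or (iv), with the first vertex of the new geodesic equal to $m'=h_j$. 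If $m$ is \emph{not} on $h$, then $T_k=T_h$ is good and non-trivial; iterating $\searrow$ down to $g$, each terminal marking is inherited and the added domain element is always the last vertex of the next geodesic, which by Corollary~\ref{uniqueness} forces $(k,m)$ to be the terminal slice, contrary to hypothesis. Your sentence about ``tracing the iterative descent'' gestures at this last point, but it must be established \emph{before} you assert that $D(h)\cup\{m\}$ is a component domain, not folded into the spherical subcase afterward.
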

\begin{proof}
Let $(k,m)$ be a slice. We prove that $(k,m)$ has a successor or is
the terminal slice.

We first assume that $m$ is not the last vertex on $k$. Let $m'$ be
the vertex on $k$ following the vertex $m$. If $D(k)\cup\{m'\}$ is
non-spherical, then $(k,m')$ is a slice. Slices $(k,m)$ and $(k,m')$
are in configuration (i) of Definition~\ref{consecutive slices}, in
particular $(k,m')$ is a successor of $(k,m)$. If $D(k)\cup\{m'\}$
is spherical, then we have $k\swarrow D(k)\cup\{m'\}$ and by
Theorem~\ref{component domains are domains} there is a geodesic
$k'\in H$ with $D(k')=D(k)\cup \{m'\}$ and $k\swarrow k'$. Then the
first vertex on $k'$ is $m$ and the slice $(k',m)$ is a successor of
$(k,m)$ in configuration (ii).

We now assume that $m$ is the last vertex on $k$. Since we have
$k\neq g$, there is $k'\in H$ satisfying $k\searrow k'$. We consider
first the case where $m$ is a vertex on $k'$. Let $m'$ be the unique
element of $D(k)\setminus D(k')$ which is the vertex on $k'$
preceding $m$. If $D(k')\cup \{m\}$ is non-spherical, then $(k',m)$
is a slice which is a successor of $(k,m)$ in configuration (iii).
Otherwise we have $k'\swarrow D(k')\cup \{m\}$ and by
Proposition~\ref{component domains are domains} there is a geodesic
$k''$ with $D(k'')=D(k')\cup \{m\}$ and $k'\swarrow k''$. The first
vertex on $k''$ equals $m'$. The pair $(k'',m')$ is a slice, since
$D(k'')\cup\{m'\}=D(k)\cup\{m\}$ is non-spherical. Then $(k'',m')$
is a successor of $(k,m)$ in configuration (iv).

It remains to consider the case where $m$ is not on $k'$. Consider
the geodesics satisfying $k=k^n\searrow k'=k^{n-1}\searrow\ldots
\searrow k^0=g$. Then for every $n\geq i> 0$ the unique element of
$D(k^i)\setminus D(k^{i-1})$ is the last vertex on $k^{i-1}$ and
$T_{k^{i-1}}$ is not trivial. By Corollary~\ref{uniqueness}, the
slice $(k,m)$ is the terminal slice. This completes the proof that
every slice has a successor or is the terminal slice.

Following the same scheme and using Corollary~\ref{uniqueness}
instead of Proposition~\ref{component domains are domains} we obtain
that a successor is unique. Analogously, every slice which is not
the initial slice is a successor of a unique other slice.
\end{proof}

In view of Remark~\ref{ends end}, Theorem~\ref{order good} has the
following immediate consequence.

\begin{cor}[{compare \cite[Proposition 5.4]{MM}}]
\label{resolution finite} There is a (unique) sequence of slices
from the initial slice to the terminal slice, such that for each
pair of consecutive elements, the second slice is a successor of the
first slice.
\end{cor}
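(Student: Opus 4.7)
The plan is to build the sequence by iterating the successor operation starting from the initial slice, and to appeal to Theorem~\ref{order good} for existence and uniqueness at each step. More precisely, set $\sigma_0$ to be the initial slice; while $\sigma_i$ is not the terminal slice, let $\sigma_{i+1}$ be the unique successor of $\sigma_i$ provided by Theorem~\ref{order good}. This defines a (possibly infinite) sequence in which every consecutive pair stands in the successor relation. The only substantive issue is to show that this process terminates at the terminal slice after finitely many steps.

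Termination will follow from finiteness of the set of all slices. A slice is a pair $(k,m)$ with $k \in H$ and $m$ a vertex on $k$. The hierarchy $H$ itself is finite: in the proof of Lemma~\ref{hierachies exist} the number of geodesics with domain of any given cardinality $i$ was bounded by an explicit quantity depending only on $(W,S)$, and, since $S$ is finite, domains have bounded cardinality. Each geodesic $k \in H$ has a finite vertex set, so there are only finitely many slices.

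The main obstacle is ruling out cycles in the iteration. Suppose for contradiction that $(\sigma_i)$ never reaches the terminal slice. By finiteness, there exist indices $i < j$ with $\sigma_i = \sigma_j$; choose $i$ minimal with this property. If $i = 0$, then $\sigma_{j-1}$ is a predecessor of the initial slice, contradicting Remark~\ref{ends end}. If $i \geq 1$, then $\sigma_i$ has two predecessors $\sigma_{i-1}$ and $\sigma_{j-1}$ in the sequence, which must coincide by the uniqueness-of-predecessor part of Theorem~\ref{order good}; this yields $\sigma_{i-1} = \sigma_{j-1}$, violating the minimality of $i$. Hence the sequence is finite, and since every non-terminal slice admits a successor, the last term must be the terminal slice.

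Uniqueness of the sequence is then automatic: any sequence satisfying the stated conditions must start at the initial slice (the unique slice which is not a successor of any slice, by Remark~\ref{ends end}) and, at each step, its next term is forced to be the unique successor of the previous term by Theorem~\ref{order good}. Therefore it agrees termwise with $(\sigma_i)$ and both have the terminal slice as last entry, finishing the argument.
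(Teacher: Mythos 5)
Your proposal is correct and uses exactly the ingredients the paper relies on: the paper simply declares the corollary an ``immediate consequence'' of Theorem~\ref{order good} together with Remark~\ref{ends end}, and your argument (iterate the unique successor, use finiteness of the set of slices and uniqueness of predecessors to exclude cycles) is the standard spelling-out of that claim. No gaps.
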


We are now prepared for the following.

\begin{proof}[Proof of Theorem~\ref{connecting
markings}] Let $\mu,\mu'$ be two different good markings with core
$s\in S$. Since $S$ is twist-rigid, by Lemma~\ref{hierachies exist}
there is a hierarchy $H$ with main geodesic $g$ with a single vertex
$s$ and $I_g=\mu, T_g=\mu'$. By Remark~\ref{associated markings} and
Corollary~\ref{resolution finite} it is now enough to justify that
if a slice $(k',m')$ is a successor of a slice $(k,m)$, then their
associated good markings $\nu=(D(k),m),\ \nu'=(D(k'),m')$ are
related by move N1, M2 or M3.

If $(k,m), (k',m')$ are in configuration (i) of Definition
\ref{consecutive slices}, then $\nu,\nu'$ are related by move N1. If
$(k,m), (k',m')$ are in configuration (ii) or (iii), then $\nu,\nu'$
are related by move M2. Finally, if $(k,m), (k',m')$ are in
configuration (iv), then $\nu,\nu'$ are related by move M3.
\end{proof}

\begin{rem}
\label{removing assumptions hierarchies} Observe that in the above
proof we have only once used the hypothesis that $S$ is twist-rigid,
to guarantee the existence of the appropriate hierarchy
(Lemma~\ref{hierachies exist}). However, the proof of
Lemma~\ref{hierachies exist} just requires that $S$ does not admit
an elementary twist with $J$ containing $s$. Hence in the statement
of Theorem~\ref{connecting markings} we could replace the hypothesis
that $S$ is twist-rigid with the above weaker hypothesis. However,
we do not need this stronger result.
\end{rem}

\section{The last move}
\label{section 2-spherical complete markings}
In this section we complete the proof of the main theorem by proving Proposition~\ref{connecting 2}.
We consider bases and markings in full generality, as defined in Section~\ref{half-space choices}.

\begin{defin}
\label{shadow} Let $(s,w)$ be a base with support $J$. The
\textbf{shadow} of the base $(s,w)$ is the set of those elements
$j\in J$ which satisfy $d(wj.c_0,\mathcal Y_s)\leq d(w.c_0,\mathcal
Y_s)$. We denote the shadow by $\widetilde{J}$.
\end{defin}

\begin{lemma}
\label{Deodhar} The shadow $\widetilde{J}$ is spherical (possibly
reducible).
\end{lemma}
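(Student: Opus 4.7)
The plan is to first reformulate the shadow $\widetilde{J}$ algebraically. Writing a reduced expression $w=j_1\cdots j_n$ and setting $\alpha:=\alpha^n=w^{-1}\alpha_s$, Remark~\ref{extending base}(ii) together with the description of the support $J=\{s,j_1,\ldots,j_n\}$ shows that $\alpha$ is a positive root of the Tits representation of $W_J$, with a strictly positive coefficient on every simple root of $J$. Applying Lemma~\ref{brink-howlett} to the wall $w^{-1}\mathcal{Y}_s$ (with associated positive root $\alpha$) and the simple reflection $j$, and using the $W$-invariance of gallery distances to identify $d(wj.c_0,\mathcal{Y}_s)$ with $d(j.c_0,w^{-1}\mathcal{Y}_s)$, one finds that the defining inequality of $\widetilde{J}$ translates into a sign condition on $\langle\alpha,\alpha_j\rangle$. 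Thus $\widetilde{J}=\{j\in J:\langle\alpha,\alpha_j\rangle\text{ has the sign corresponding to non-increase of distance}\}$.

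To show this set is spherical, I would argue by contradiction using Deodhar's Proposition~\ref{deod main}. Suppose $\widetilde{J}$ contains an irreducible non-spherical subset $I$. Since the centraliser of $W_I$ in $W$ equals $W_{I^\bot}$ by Proposition~\ref{deod main}, it suffices to derive a contradiction by showing that the reflection $r_\alpha=w^{-1}sw$ centralises $W_I$; indeed the support of $r_\alpha$ equals the support of $\alpha$, namely $J\supseteq I$, so $r_\alpha\notin W_{I^\bot}$. The centralisation is to be extracted from the sign condition on $\langle\alpha,\alpha_j\rangle$ for $j\in I$, together with the strict bound $\langle\alpha^{i-1},\alpha_{j_i}\rangle>-1$ (condition~(ii) of Definition~\ref{domain}) and the tree-$2$-sphericity of $J$ (Lemma~\ref{domain tree-2-spherical}).

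The main obstacle lies in carrying out this centralisation step precisely: the sign condition $\langle\alpha,\alpha_j\rangle$ has for $j\in I$ is only an inequality, whereas genuine commutation of the reflections $r_\alpha$ and $r_j$ requires the orthogonality $\langle\alpha,\alpha_j\rangle=0$. To bridge this gap, one must exploit the additional geometric constraints on $\alpha$ coming from conditions~(i) and~(ii) of a base together with the fact that the bilinear form restricted to $V_I$ fails to be positive definite (since $I$ is non-spherical), to force the relevant inner products to actually vanish. The tree structure of the Dynkin diagram of $J$ is expected to play a key role in propagating this orthogonality across all of $I$.
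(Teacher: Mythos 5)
Your algebraic setup is fine, and the first half of your plan does match the paper's: the subset
\[
I=\bigl\{\,j\in\widetilde J : d(wj.c_0,\mathcal Y_s)=d(w.c_0,\mathcal Y_s)\,\bigr\}
\]
consists exactly of those $j$ with $\langle\alpha,\alpha_j\rangle=0$, hence commutes with the reflection $r=wsw^{-1}$, and since $\alpha$ has full support $J$ (equivalently $\ell(r)=2\ell(w)+1$), the reflection $r$ lies in no proper standard parabolic of $W_J$; Proposition~\ref{deod main} then forces $I$ to be spherical. The gap is in what you call the ``main obstacle''. Your proposed resolution --- forcing $\langle\alpha,\alpha_j\rangle=0$ for \emph{all} $j$ in the shadow --- cannot succeed, because the shadow genuinely contains elements with $\langle\alpha,\alpha_j\rangle<0$: for instance the last letter $j_n$ of a reduced word for $w$ always satisfies $d(wj_n.c_0,\mathcal Y_s)=\ell(w)-1<\ell(w)$, so $j_n\in\widetilde J$ with strictly negative inner product. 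No amount of extra input from conditions~(i)--(ii) or tree-$2$-sphericity will make these vanish, so the centralisation/Deodhar argument proves only that $I$ is spherical, not that $\widetilde J$ is.

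The missing idea is a second, different mechanism for the strictly-decreasing part. The paper takes $w_I$ to be the longest element of $W_I$ (legitimate now that $I$ is known to be spherical) and observes that $w_Ir=rw_I$ and that every element of $\widetilde J$ --- whether in $I$ or with $\ell(rj)<\ell(r)$ --- is a right descent of the single group element $w_Ir$. Lemma~\ref{bourbaki} (descent sets are spherical) then gives the conclusion. This descent-set step is the essential content handling the non-orthogonal part of the shadow, and it is absent from your proposal; without it, or some substitute for it, the argument does not close.
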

\begin{proof}
Let $I\subset \widetilde{J}$ be the set of $j\in \widetilde{J}$
satisfying $d(wj.c_0,\mathcal{Y}_s)=d(w.c_0,\mathcal Y_s)$. By
Lemma~\ref{brink-howlett}, the set $I$ commutes with the reflection
$r=wsw^{-1}$. By condition (i) in Definition~\ref{domain} we have
$\ell(r)=2\ell(w)+1$. Hence $r$ does not lie in any $W_K$ for a
proper subset $K\subset J$. Then Proposition~\ref{deod main}
guarantees that $I$ is spherical. Denote by $w_I$ the longest
element of $W_I$. Elements $j\in \widetilde{J}$ with
$d(wj.c_0,\mathcal{Y}_s)<d(w.c_0,\mathcal Y_s)$ satisfy
$\ell(rj)<\ell(r)$. Since $w_Ir=rw_I$, the shadow $\widetilde{J}$ is
contained in (and in fact equals) the set of $j\in J$ satisfying
$\ell(w_Irj)<\ell(w_Ir)$. Hence, by Lemma~\ref{bourbaki},
$\widetilde{J}$ is spherical.
\end{proof}

We have the following generalisation of Remark~\ref{extending
marking}(ii), which follows from Remark~\ref{extending marking}(i).

\begin{rem}
\label{extending avoiding shadow} If $((s,w),m)$ is a complete
marking, $j$ is an element of $S\setminus (\widetilde{J}\cup
J^{\bot})$ and $((s,w),j)$ is not a complete marking, then
$((s,wj),m)$ is a complete marking.
\end{rem}

Below we use the following terminology. Let $T$ be a subset of $S$.
A \textbf{component} of $T$ is maximal subset $T'\subset T$ such
that each two elements of $T'$ are connected by a path in $T$. A
subset $J\subset T$ \textbf{separates} $T$ if $T\setminus J$ has at
least two non-empty components. A subset $J\subset T$ \textbf{weakly
separates} $T$ if $J\cup J^\bot$ separates $T$. According to this
terminology, the set $S$ is twist-rigid if there is no irreducible
spherical subset $J\subset S$ which weakly separates $S$.

\begin{lemma}
\label{avoiding shadow} Assume that $S$ is twist-rigid. Let
$J\subset S$ be irreducible $2$-spherical and non-spherical. Let
$K\subset J$ be spherical (possibly reducible). Then for every $m\in
S \setminus (J\cup J^\bot)$ we have the following:
\begin{enumerate}[(i)]
\item
$m$ is in the same component of $S \setminus (K\cup J^\bot)$ as
$J\setminus K$, or
\item
$m$ is not adjacent to any element of $J\setminus K$, $m$ belongs to
$K^\bot$, and $J\cup \{m\}$ is twist-rigid.
\end{enumerate}
\end{lemma}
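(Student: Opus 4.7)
The plan is to assume that (i) fails and to derive the three conclusions of (ii) in turn. Since $J$ is $2$-spherical, every pair of elements of $J$ is adjacent, so $J\setminus K\subset S\setminus(K\cup J^\bot)$ lies in a single component, which I call $C_0$; this disambiguates the phrase ``the same component as $J\setminus K$''. Let $C\neq C_0$ be the component containing $m$. The non-adjacency of $m$ to every element of $J\setminus K$ is then automatic, for an adjacency to some $j\in J\setminus K$ would put $m$ in $C_0$.

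To prove $m\in K^\bot$, I argue by contradiction. Supposing $m$ fails to commute with some $k_0\in K$, let $K_0\subset K$ be the irreducible component of $K$ containing $k_0$, so that $K_0$ is irreducible spherical. The key observation is $K_0\cup K_0^\bot\supseteq K\cup J^\bot$: different irreducible components of the spherical set $K$ commute (so $K\setminus K_0\subset K_0^\bot$), and $J^\bot$ commutes with the subset $K_0\subset J$ (so $J^\bot\subset K_0^\bot$). Consequently, components of $S\setminus(K_0\cup K_0^\bot)$ refine those of $S\setminus(K\cup J^\bot)$. Irreducibility of $J$, combined with the fact that $K\setminus K_0$ commutes with $K_0$, produces some $j'\in J\setminus K$ not commuting with $K_0$. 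Then $m$ and $j'$ lie in refinements of $C$ and $C_0$ respectively, hence in distinct components of $S\setminus(K_0\cup K_0^\bot)$. This means the irreducible spherical $K_0$ weakly separates $S$, contradicting twist-rigidity.

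For the twist-rigidity of $J\cup\{m\}$, the same idea promotes any would-be irreducible spherical weak separator $L\subset J\cup\{m\}$ to a separator of $S$. If $m\in L$, the relations $m_{m,j}=\infty$ for $j\in J\setminus K$ (from the first paragraph) together with sphericity of $L$ force $L\subset K\cup\{m\}$, and then irreducibility of $L$ together with $m\in K^\bot$ forces $L=\{m\}$. One computes $L^\bot\cap J=K$, so the complement inside $J\cup\{m\}$ is exactly $J\setminus K$, a single component by $2$-sphericity of $J$ --- contradicting the assumed existence of two components. Otherwise $L\subset J$, and by $2$-sphericity the set $J\setminus(L\cup L^\bot)$ is connected, so the second component must be $\{m\}$; this forces both $K\subset L\cup L^\bot$ (so that $m$ is not adjacent to any surviving element of $J$) and $L\cap(J\setminus K)\neq\emptyset$ (so that $m$ fails to commute with $L$ and is therefore not removed). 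Combined with $J^\bot\subset L^\bot$, one obtains $L\cup L^\bot\supseteq K\cup J^\bot$; hence $m\in C$ and any $j\in J\setminus(L\cup L^\bot)\subset J\setminus K\subset C_0$ lie in $S\setminus(L\cup L^\bot)$ in distinct components, so the irreducible spherical $L$ weakly separates $S$ --- again contradicting twist-rigidity.

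The common mechanism behind both contradictions is the $\bot$-closure comparison $X\cup X^\bot\supseteq K\cup J^\bot$ (for $X=K_0$ and $X=L$), which allows separations of smaller sets to be promoted to separations of $S$; once this comparison is in place, the $2$-sphericity of $J$ makes all remaining component-counting essentially automatic. I expect the main obstacle will be the careful case enumeration in the third paragraph, in particular verifying that the condition ``$\{m\}$ is an isolated component of the complement inside $J\cup\{m\}$'' really does force the crucial inclusion $K\subset L\cup L^\bot$.
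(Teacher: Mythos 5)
Your proof is correct and follows essentially the same route as the paper's: both arguments hinge on the inclusion $X\cup X^\bot\supseteq K\cup J^\bot$ for the relevant irreducible spherical set $X$ (an irreducible component of $K$, or the would-be separator $L$), so that twist-rigidity of $S$ — connectivity of $S\setminus(X\cup X^\bot)$ — forbids $m$ and $J\setminus K$ from lying in different components. The only differences are organizational (you argue contrapositively from the failure of (i), and you spell out the easy subcase $m\in L$ which the paper dispatches in one line).
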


\begin{proof} We show that if any of the three elements of assertion (ii) does not hold, then we have assertion (i).
First, obviously if $m$ is adjacent to some element of $J\setminus K$, then we have assertion (i).

Second, if $m\notin K^\bot$, then we have $m\notin K'^\bot$ for some
irreducible $K'\subset K$ satisfying $K\subset K'\cup K'^\bot$.
Since $J$ is irreducible, $J\setminus (K'\cup K'^\bot)$ is
non-empty. Since $K'$ does not weakly separate $S$, there is a path
from $m$ to an element of $J\setminus (K'\cup K'^\bot)$ outside
$K'\cup K'^\bot\supset K\cup J^\bot$, and we have assertion (i).

Otherwise, if $m\in K^\bot$ but $J\cup \{m\}$ is not twist-rigid,
then there exists some irreducible spherical subset $L\subset
J\cup\{m\}$ which weakly separates $J\cup\{m\}$. We must have
$m\notin L\cup L^\bot$ and $K\subset L\cup L^\bot$, because $J$ is
$2$-spherical. Since $L$ does not weakly separate $S$, there is a
path from $m$ to some element of the non-empty set $J\setminus
(L\cup L^\bot)\subset J\setminus K$ outside $L\cup L^\bot\supset
K\cup J^\bot$. This again yields assertion (i).
\end{proof}

\begin{lemma}
\label{small twist rigid} In the case of assertion (ii) in Lemma
\ref{avoiding shadow}, the set $K$ is a maximal spherical subset of
$J$.
\end{lemma}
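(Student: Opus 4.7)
The plan is to prove the contrapositive: if $K$ is not maximal spherical in $J$, then $J \cup \{m\}$ admits an elementary twist, contradicting the twist-rigidity of $J \cup \{m\}$ furnished by assertion (ii). Assuming non-maximality, fix $k \in J \setminus K$ with $K \cup \{k\}$ still spherical, and let $L \subset K \cup \{k\}$ be the irreducible component containing $k$. By construction $L$ is irreducible, and since $L \subset K \cup \{k\}$ it is spherical. The task is to exhibit $L$ as the irreducible spherical set supporting an elementary twist of $J \cup \{m\}$.

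First I would locate $m$ relative to $L$. The set $L$ lies in $J$, so $m \notin L$; and assertion (ii) says $m$ is not adjacent to $k \in J \setminus K$, so $m$ does not commute with $k \in L$, giving $m \notin L^\bot$. Next I would locate $K$. Any element of $K \setminus L$ lies in a different irreducible component of $K \cup \{k\}$ than $L$, so it commutes with every element of $L$ in the ambient Coxeter system; hence $K \setminus L \subset L^\bot$, and altogether $K \subset L \cup L^\bot$. Finally I need $(J \cup \{m\}) \setminus (L \cup L^\bot)$ to contain elements beyond $m$: this amounts to $J \not\subset L \cup L^\bot$, which follows from the irreducibility of $J$ together with the facts that $L$ is non-empty (it contains $k$) and is a proper subset of $J$ (since $L$ is spherical while $J$ is not).

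With these placements in hand, the partition writes itself. Set $A = \{m\}$ and $B = J \setminus (L \cup L^\bot)$; both are non-empty, and their union equals $(J \cup \{m\}) \setminus (L \cup L^\bot)$. By assertion (ii) the only adjacencies of $m$ inside $J$ are elements of $K$, and $B \cap K = \emptyset$ because $K \subset L \cup L^\bot$; hence no element of $A$ is adjacent to any element of $B$. This exhibits an elementary twist of $J \cup \{m\}$, the desired contradiction. The step I expect to require the most care is the placement of $K$ inside $L \cup L^\bot$, which is where the choice of $L$ as an irreducible component (rather than all of $K \cup \{k\}$) pays off.
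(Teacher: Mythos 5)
Your argument is correct and is essentially the paper's own proof: both take an irreducible spherical piece $L$ of a hypothetical larger spherical set that contains an element of $J\setminus K$, observe that $K\subset L\cup L^\bot$ while $m\notin L\cup L^\bot$ and (by irreducibility of $J$) some vertex of $J$ also lies outside $L\cup L^\bot$, and conclude that $L$ weakly separates $J\cup\{m\}$, contradicting assertion (ii). Your explicit partition $A=\{m\}$, $B=J\setminus(L\cup L^\bot)$ is just a slightly more spelled-out version of the same contradiction.
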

\begin{proof}
If there is a spherical subset $L\subset S$ with $K\subsetneq
L\subset J$, then we have $L\neq J$, since $J$ is non-spherical. Let
$L'\subset L$ be irreducible satisfying $L\subset L'\cup L'^\bot$
and containing an element outside $K$. Then $L'\cup L'^\bot$ does
not contain $m$, contains $L\supset K$, but does not contain some
other vertex in $J$, by irreducibility of $J$. Hence $L$ weakly
separates $J\cup \{m\}$. Contradiction.
\end{proof}

We are now ready for the following.

\begin{proof}[Proof of Proposition~\ref{connecting 2}]
Let $\mu=((s,w),m)$ be the complete marking with non-spherical
support $J$ which we want to relate by moves to some semi-complete
marking $\mu'$ with support $J'$ containing $J$. We prove
Proposition~\ref{connecting 2} by (backward) induction on $\ell(w)$.
By Remark~\ref{parallel wall theorem}, for $\ell(w)$ large enough
the content of Proposition~\ref{connecting 2} is empty. Suppose we
have verified Proposition~\ref{connecting 2} for $\ell(w)=k+1$.
Assume now $\ell(w)=k$. By Lemma~\ref{Deodhar}, the shadow
$\widetilde{J}\subset J$ of $(s,w)$ is spherical.

Since $S$ is twist-rigid, we are in position to apply Lemma~\ref{avoiding shadow}, with
$K=\widetilde{J}$. First assume that we are in the case of assertion
(ii) of Lemma~\ref{avoiding shadow} and thus we also have the
conclusion of Lemma~\ref{small twist rigid}. Let $\mu'=((s,w'),
m')$ be any marking with support $J'$ satisfying $J'\cup \{m'\}=J$. The marking $\mu'$ is semi-complete
since $J$ is irreducible non-spherical. Then $\mu$ and $\mu'$ are
related by move M4 and we are done.

Now assume that we are in the case of assertion (i) of Lemma
\ref{avoiding shadow}. Then there is a path $(h_0=m, h_1, \ldots,
h_l)$ in $S\setminus (\widetilde{J}\cup J^\bot)$, where $h_l\in
J\setminus \widetilde{J}$. Let $i$ be the least index such that
$J\cup\{h_i\}$ is $2$-spherical (possibly $i=l$). Then for $1\leq
i'<i$ the complete markings $((s,w), h_{i'-1}), ((s,w),
h_{i'})$ are related by move M1.

If $((s,w), h_i)$ is complete, then also $((s,w), h_{i-1}), ((s,w),
h_i)$ are related by move M1. Then we can put $\mu'=((s,w), h_i)$.
If $((s,w), h_i)$ is not complete, then $((s,w), h_{i-1})$ is
related by move M2 to $\nu=((s,wh_i), h_{i-1})$, which is a complete
marking by Remark~\ref{extending avoiding shadow}.

The word-length of $wh_i$ equals $k+1$ and we can apply
Proposition~\ref{connecting 2} with $\ell(w)=k+1$. We obtain that
$\nu$ is related by a sequence of moves M1, M2 and M4 (only allowed
as the last move) to a marking $\mu'=((s,w'), m')$ with support $J'$
such that $J'\cup\{m'\}$ is $2$-spherical and satisfies $J'\supset
J\cup\{h_i\}\supset J$. This finishes the proof of
Proposition~\ref{connecting 2} for $n=k$.
\end{proof}

\begin{rem}
\label{removing assumptions hierarchies 2} In the above argument, we
have only once used the hypothesis that $S$ is twist-rigid, in the
proof of Lemma~\ref{avoiding shadow}. However, we could just require
that there is no irreducible spherical subset $K\subset S$ which
weakly separates $S$ and together with $s$ is contained in some
irreducible $2$-spherical non-spherical subset $J\subset S$. This
allows to weaken the hypothesis of Proposition~\ref{connecting 2}.
Again, we do not need this stronger result.
\end{rem}

\appendix

\section{Reflection- and angle-compatibility}
\label{App1}

In this appendix we prove that the relations of reflection- and angle-compatibility are symmetric.
Let $R$ be a Coxeter generating set for a group $W$ and let $\mathbb A$ be the associated Davis complex.

Translating the language of the root systems into the language of
the Davis complex, the main result of \cite{Deodhar_reflections} may
be phrased as follows.

\begin{thm}
\label{small Coxeter groups} Let $S\subset W$ be some set of
$R$-reflections and let $W_S \subset W$ be the subgroup generated by
$S$. Define $\bar S$ as the set of all conjugates under $W_S$ of
elements of $S$. Let $C \subset \mathbb A$ denote a connected
component of the space obtained by removing from $\mathbb A$ every
wall associated to an element of $\bar S$. Let $S_C$ be the subset
of $\bar{S}$ of $R$-reflections in walls adjacent to some chamber in
$C$.

Then $S_C$ is a Coxeter generating set for $W_S$ and (the closure
of) $C$ is a fundamental domain for the $W_S$-action on $\mathbb A$.
In particular $W_S$ is a Coxeter group.
\end{thm}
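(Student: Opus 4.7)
My plan is to establish the statement by verifying, in order, the following three claims: (a) $\overline{C}$ is a strict fundamental domain for the action of $W_S$ on $\mathbb A$; (b) the set $S_C$ generates $W_S$; (c) the pair $(W_S, S_C)$ is a Coxeter system with Coxeter matrix given by $m_{r,r'} = \mathrm{ord}(rr')$ for $r,r' \in S_C$. The backbone of the argument is purely geometric: one exploits the tessellation of $\mathbb A$ by the connected components of the complement of the walls in $\bar S$, and translates wall-crossing counts into combinatorial data for $W_S$.

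For (a) I would fix a chamber $c_0 \in C$ and induct on the number of walls of $\bar S$ separating an arbitrary chamber $c \in \mathbb A$ from $c_0$. If this number is zero, then $c \in \overline C$; otherwise there exists $r \in \bar S$ whose wall is adjacent to $c$ and separates $c$ from $c_0$, so that $r.c$ is strictly closer to $c_0$ in this count, and since $\bar S$ is $W_S$-invariant the induction stays inside $W_S$. For the freeness part, if $w \in W_S$ satisfies $w.C = C$, then any minimal gallery from $c_0$ to $w.c_0$ crosses no wall of $\bar S$; inspecting such a gallery one sees that each crossing wall must in fact lie in $\bar S$ (because the reflection it carries is in $W_S$, hence a conjugate of some element of $S$ by elements whose walls are in $\bar S$), forcing the gallery to have length zero, and then $w = 1$ because $W$ acts simply transitively on chambers of $\mathbb A$.

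For (b), given $s \in S$, I induct on the distance from $C$ to the wall $\mathcal Y_s$. If the distance is zero then $s \in S_C$ by definition. Otherwise pick a wall of some $r \in \bar S$ separating $C$ from $\mathcal Y_s$; the reflection $rsr$ lies in $\bar S$ (by $W_S$-invariance of $\bar S$) and its wall $r.\mathcal Y_s$ is strictly closer to $C$, so by induction $rsr$ lies in the subgroup generated by $S_C$, hence so does $s$. Iteratively one also sees that the auxiliary reflections $r$ used in the reduction can themselves be taken in $S_C$ after finitely many steps. This yields $S \subset \langle S_C \rangle$, hence $W_S = \langle S_C \rangle$.

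Claim (c) will be the main obstacle. My plan is to verify Matsumoto's exchange condition for $(W_S, S_C)$. Define $\ell_C(w)$ to be the minimal number of walls of $\bar S$ crossed by a gallery from $c_0$ to $w.c_0$, and first show $\ell_C(wr) = \ell_C(w) \pm 1$ for every $r \in S_C$, by analysing whether the wall of $wrw^{-1}$ separates $c_0$ from $w.c_0$. Given any expression $w = r_1 \cdots r_n$ with $r_i \in S_C$ that is not $\ell_C$-reduced, the corresponding gallery $c_0, r_1.c_0, r_1 r_2.c_0, \ldots$ must recross some wall of $\bar S$; the two crossings of that wall produce the Tits-style cancellation which yields the exchange condition. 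Combined with the manifest relations $(rr')^{m_{r,r'}} = 1$ holding inside $W$, this identifies $(W_S, S_C)$ with the Coxeter system of the matrix $(m_{r,r'})$. The most delicate bookkeeping will be showing that $\ell_C$ is additive on minimal galleries and that every wall crossed by such a gallery actually lies in $\bar S$ — this is where (a) is essential, since it guarantees that consecutive chambers of the gallery belonging to different $W_S$-translates of $C$ are separated precisely by walls of $\bar S$.
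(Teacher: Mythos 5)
The paper offers no proof of this theorem: it is presented as a translation into Davis-complex language of the main result of Deodhar's note \cite{Deodhar_reflections} (the same statement is due independently to Dyer), and is simply cited. Your proposal is therefore necessarily a different route, namely a direct geometric proof; its architecture (strict fundamental domain, generation by $S_C$, exchange/deletion condition via the count $\ell_C$ of crossed $\bar S$-walls) is the standard one. Parts (b) and (c) are essentially sound: the key observations that $\mathcal Y_r$ is the \emph{only} $\bar S$-wall separating $C$ from $r.C$ when $r\in S_C$, that consequently $\ell_C(wr)=\ell_C(w)\pm 1$, and that a non-reduced word forces a repeated wall and hence a deletion, are exactly the right mechanism, and the double induction in (b) on the distance from $c_0$ to $\mathcal Y_s$ works (both the auxiliary reflection $r$ and $rsr$ have walls strictly closer to $c_0$).

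The genuine gap is in the freeness half of (a). You assert that each wall crossed by a minimal gallery from $c_0$ to $w.c_0$, $w\in W_S$, ``carries a reflection in $W_S$'' and hence lies in $\bar S$. This is false: let $W$ be infinite dihedral acting on $\mathbb R$ with reflections $r_k$ at the integers, $S=\{r_0,r_2\}$, so $\bar S=\{r_{2k}\}$ and $c_0=(0,1)$; for $w=r_2r_0\in W_S$ the minimal gallery from $(0,1)$ to $(4,5)$ crosses the walls of $r_1$ and $r_3$, which do not even lie in $W_S$. What you actually need --- that every $w\in W_S\setminus\{1\}$ is separated from $c_0$ by at least one wall of $\bar S$ --- is precisely the non-obvious core of Deodhar's theorem and cannot be obtained by inspecting which reflections the gallery ``carries''. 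The argument is repairable without circularity, because your step (c) uses only the transitivity half of (a): prove (b) and the cocycle identity $N(uv)\cap\bar S=(N(u)\cap\bar S)\,\triangle\,u(N(v)\cap\bar S)u^{-1}$ (where $N(w)$ is the set of reflections whose walls separate $c_0$ from $w.c_0$, and one uses that $\bar S$ is $W_S$-invariant), together with $N(r)\cap\bar S=\{r\}$ for $r\in S_C$; then $N(w)\cap\bar S=\emptyset$ for $w=r_1\cdots r_n$ forces the associated reflections $t_i=r_1\cdots r_{i-1}r_ir_{i-1}\cdots r_1$ to cancel in pairs, and the deletion condition yields $w=1$. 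Two smaller slips: in the transitivity induction, a separating $\bar S$-wall \emph{adjacent} to $c$ need not exist, and reflecting in an arbitrary separating $\bar S$-wall need not decrease the number of separating $\bar S$-walls --- induct instead on the gallery distance $d(c,c_0)$, which does drop; and in (c) the claim that ``every wall crossed by a minimal gallery lies in $\bar S$'' should read that every wall separating two consecutive $W_S$-translates of $C$ along the gallery lies in $\bar S$.
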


\begin{cor}\label{cor:ReflCompatible}
Let $S\subset W$ be a Coxeter generating set such that every element of $S$ is conjugate to some element of $R$. Then every element of $R$ is conjugate to some element of $S$.
\end{cor}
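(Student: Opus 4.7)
The plan is to apply Theorem~\ref{small Coxeter groups} directly to the set $S$. By hypothesis every $s \in S$ is an $R$-reflection, so the theorem applies and produces the set $\bar S$ of $W_S$-conjugates of $S$, a connected component $C$ of the complement in $\mathbb A$ of the walls associated to elements of $\bar S$, and a Coxeter generating set $S_C$ for $W_S$; moreover $\bar C$ is a fundamental domain for the $W_S$-action on $\mathbb A$. The decisive point is that $W_S = W$, since $S$ is by hypothesis already a Coxeter generating set for $W$. Thus $\bar C$ is a fundamental domain for the full $W$-action on $\mathbb A$.

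The key step is to show that $C$ is the interior of a single chamber of $\mathbb A$. Indeed, $C$ is a union of open chambers of $\mathbb A$ (it is a union obtained by removing from $\mathbb A$ only those walls associated with elements of $\bar S$, which is a subset of the set of all walls). But $W$ acts simply transitively on chambers of $\mathbb A$; if $C$ contained two distinct chambers $c, c'$, one could find two interior points $p \in c$ and $p' \in c'$ lying in the same $W$-orbit yet both in $\bar C$, contradicting the fundamental domain property. It follows that every wall of $\mathbb A$ is the wall of some element of $\bar S$, i.e.\ $\bar S$ contains every $R$-reflection of $W$.

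Given $r \in R$, which is in particular an $R$-reflection, the wall $\mathcal W_r$ therefore coincides with the wall of some element $r' \in \bar S$. Since $\bar S$ consists of $W_S$-conjugates of elements of $S \subset \{R\text{-reflections}\}$, the element $r'$ is itself an $R$-reflection, and an $R$-reflection is determined by its wall; hence $r = r'$. Thus $r \in \bar S$ is $W$-conjugate to some element of $S$, which is the desired conclusion.

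The only step that requires a small amount of care is the deduction that $\bar C$ being a fundamental domain for the full $W$-action forces $C$ to be a single open chamber, and hence that $\bar S$ exhausts all $R$-reflections; everything else is a direct unwinding of definitions once Theorem~\ref{small Coxeter groups} is invoked.
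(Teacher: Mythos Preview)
Your proof is correct and follows exactly the same approach as the paper's: the paper's one-line argument is simply that $W_S = W$ forces the fundamental domain $C$ from Theorem~\ref{small Coxeter groups} to consist of a single chamber, and you have spelled out precisely this reasoning in detail. The only minor imprecision is the phrase ``$C$ is a union of open chambers'' (the component $C$ may also contain panels lying on walls not in $\bar S$), but this does not affect the argument, since the contradiction with the fundamental-domain property already arises from the presence of two distinct open chambers.
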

\begin{proof}
Indeed, if $W_S = W$, then the set $C$ consists of exactly one chamber.
\end{proof}

In order to obtain a similar statement concerning
angle-compatibility, we record the following well-known fact.

\begin{lemma}\label{lem:2refl}
Given a pair of $R$-reflections $\{s, s'\}\subset W$ generating a
finite subgroup, there is a spherical pair $\{r, r'\} \subset R$
such that $W_{\{s, s'\}}$ is conjugate to a subgroup of $W_{\{r,
r'\}}$.
\end{lemma}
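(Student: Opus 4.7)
The plan is to reduce to a finite standard parabolic and then apply Steinberg's fixed point theorem.

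First, I would invoke the fact that $W_{\{s,s'\}}$ is a finite subgroup of $W$, so by the Bruhat--Tits fixed point theorem applied to the CAT(0) Davis complex $\mathbb A$ associated with $R$, it fixes some point $x \in \mathbb A$. The stabilizer in $W$ of any point of $\mathbb A$ is of the form $w W_T w\inv$ for some spherical $T \subset R$ and some $w \in W$ (being the stabilizer of the smallest cell containing the point). Thus $W_{\{s,s'\}} \subset w W_T w\inv$. Replacing $s, s'$ by the $R$-reflections $w\inv s w, w\inv s' w$, we may assume outright that $s, s' \in W_T$ for some spherical $T \subset R$.

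Next, I would work inside the finite Coxeter group $W_T$ and its canonical reflection representation $V$. Denote by $H_s, H_{s'} \subset V$ the reflection hyperplanes of $s$ and $s'$, and set $U = H_s \cap H_{s'}$. Then $U$ has codimension exactly $2$ in the span of the representation, and the dihedral subgroup $D = \la s, s'\ra$ fixes $U$ pointwise. By Steinberg's fixed-point theorem for finite Coxeter groups, the pointwise stabilizer of $U$ in $W_T$ is a \emph{parabolic} subgroup of $W_T$, i.e.\ $W_T$-conjugate to $W_{T'}$ for some $T' \subset T$; moreover its rank equals the codimension of $U$, so $|T'| = 2$. Write $T' = \{r, r'\}$; since $\{r, r'\} \subset T$ and $T$ is spherical, the pair $\{r, r'\}$ is a spherical pair of $R$. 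Conjugating $D$ by the appropriate element $u \in W_T$ yields $u D u\inv \subset W_{\{r, r'\}}$.

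Composing the two conjugations (by $w\inv$ in the first step and by $u$ in the second) produces a single element of $W$ conjugating $W_{\{s, s'\}}$ into $W_{\{r, r'\}}$, as required. The first step is a routine application of the fixed-point theorem; the real content is in the second step, where the key point is that Steinberg's theorem delivers not merely a reflection subgroup but a genuine \emph{parabolic} subgroup, and whose rank matches the codimension of $U$ exactly, so that the two simple reflections we obtain belong to $T \subset R$.
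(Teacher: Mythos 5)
Your proposal is correct and follows essentially the same route as the paper: reduce to a finite parabolic (the paper simply quotes that a finite subgroup lies in a finite parabolic, which is your Bruhat--Tits step), then identify the pointwise stabilizer of the codimension-$\le 2$ fixed subspace as a parabolic of rank $\le 2$ containing $\langle s,s'\rangle$ --- your appeal to Steinberg's theorem is just a named version of the paper's argument on the sphere of the Coxeter complex. The only quibble is that the rank of the pointwise stabilizer of $U$ need not equal the codimension of $U$ in general; here it does because $U$ lies on the two distinct hyperplanes $H_s, H_{s'}$, and in any case rank $\le 2$ already suffices for the lemma.
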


In other words, every finite reflection subgroup of rank $2$ is contained in a finite parabolic subgroup of rank $2$.

\begin{proof}
Since $V = W_{\{s, s'\}}$ is finite, it is contained in some finite
parabolic subgroup of $W$. We may thus assume without loss of
generality that $W$ is finite. Let $\mathbb S$ denote the underlying
sphere of the corresponding Coxeter complex. The fixed point set
$\mathbb S^V$ of $V$ in $\mathbb S$ has codimension~at most $2$,
since it contains the intersection of two equators. Therefore, the
parabolic subgroup generated by all the reflections fixing $\mathbb
S^V$ pointwise contains $V$ and has rank~at most $2$.
\end{proof}

\begin{cor}\label{cor:AngleCompatible}
Let $S\subset W$ be a Coxeter generating set such that every
spherical pair of elements of $S$ is conjugate to some pair of
elements of $R$. Then every spherical pair of elements of $R$ is
conjugate to some pair of elements of $S$.
\end{cor}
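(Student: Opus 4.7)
Given $\{r,r'\}\subset R$ spherical, my goal is to exhibit a pair $\{s,s'\}\subset S$ that is $W$-conjugate to $\{r,r'\}$. First I would apply the hypothesis to singleton pairs $\{s,s\}$ and invoke Corollary~\ref{cor:ReflCompatible} to obtain reflection-compatibility; this in particular handles the case $r=r'$ at once, so we assume $r\neq r'$. Set $V:=W_{\{r,r'\}}$, a finite dihedral subgroup of $W$. Applying Lemma~\ref{lem:2refl} with the roles of $R$ and $S$ interchanged to the pair $\{r,r'\}$ of $S$-reflections produces a spherical pair $\{s,s'\}\subset S$ and $g\in W$ with $gVg^{-1}\subset W_{\{s,s'\}}$; this $\{s,s'\}$ is the candidate.

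The crucial step---and the one I expect to be the main obstacle---is to upgrade this containment to an equality $gVg^{-1}=W_{\{s,s'\}}$. For this I would use the hypothesis of the corollary to pick $h\in W$ with $h\{s,s'\}h^{-1}=\{q,q'\}\subset R$; then $P:=hgVg^{-1}h^{-1}$ is a conjugate of the rank-$2$ standard $R$-parabolic $W_{\{r,r'\}}$ and sits inside the rank-$2$ standard $R$-parabolic $W_{\{q,q'\}}$. Invoking the standard ``parabolic-inside-parabolic'' principle---any parabolic subgroup contained in a standard parabolic $W_J$ is $W_J$-conjugate to a standard sub-parabolic $W_{J'}$ with $J'\subset J$---and comparing ranks, one obtains $P=W_{\{q,q'\}}$, hence $gVg^{-1}=W_{\{s,s'\}}$.

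Once this equality is established, the matching of the two pairs is a short geometric argument inside the dihedral residue. Writing $c_0$ for the base chamber of the Davis complex of $(W,S)$ and $v_0$ for its vertex of type $\{s,s'\}$, let $v:=g^{-1}v_0$; then $\mathrm{Stab}_W(v)=V$ and the residue of this Davis complex at $v$ is canonically the Coxeter complex of the dihedral group $V$. The chamber $c:=g^{-1}c_0$ meets $v$, and the two walls of $c$ through $v$ are precisely those of the reflections $g^{-1}sg$ and $g^{-1}s'g$; similarly, in the Davis complex of $(W,R)$ the pair $\{r,r'\}$ bounds the base chamber at the $\{r,r'\}$-vertex. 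Since $V$ acts simply transitively on the chambers of its own Coxeter complex, there is $u\in V$ sending the one distinguished chamber onto the other, and for such $u$ one has $u\{r,r'\}u^{-1}=\{g^{-1}sg,\,g^{-1}s'g\}$. Conjugating through by $g$ yields $(gu)\{r,r'\}(gu)^{-1}=\{s,s'\}\subset S$, completing the proof.
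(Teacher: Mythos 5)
Your first two steps track the paper's own proof: Corollary~\ref{cor:ReflCompatible} gives reflection-compatibility, Lemma~\ref{lem:2refl} with the roles of $S$ and $R$ interchanged produces $\{s,s'\}$ with $gVg^{-1}\subset W_{\{s,s'\}}$, and the hypothesis together with the rank comparison of parabolics upgrades the inclusion to an equality. The gap is in your final paragraph. There you try to conjugate $\{r,r'\}$ onto $\{g^{-1}sg,\,g^{-1}s'g\}$ by an element of $V$ using only the fact that both pairs generate $V$ and that each bounds a chamber ``in its own Coxeter complex''. But the two distinguished chambers live in residues of two \emph{different} Davis complexes --- the one for $(W,S)$ at $v$ and the one for $(W,R)$ at the $\{r,r'\}$-vertex --- and these induce a priori different chamber structures on $V$; saying that $V$ acts simply transitively on ``the'' chambers and moves one distinguished chamber onto the other tacitly identifies the two structures, which is essentially the statement to be proved. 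As a standalone implication, ``two Coxeter generating pairs of reflections of the same finite dihedral parabolic are $W$-conjugate'' is false: in the decagon example from the introduction, the pairs of reflections whose axes meet at angle $\pi/10$ and at angle $3\pi/10$ both generate the same dihedral group of order $20$ and are reflection-compatible, yet they are not conjugate in $W$, since conjugation preserves the angle between the axes.

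The repair is already contained in your intermediate step, and it is exactly what the paper does: you introduced $h$ with $h\{s,s'\}h^{-1}=\{q,q'\}\subset R$ and in effect showed $(hg)W_{\{r,r'\}}(hg)^{-1}=W_{\{q,q'\}}$. Both $W_{\{r,r'\}}$ and $W_{\{q,q'\}}$ are now standard parabolics with respect to the \emph{same} generating set $R$, and conjugate standard parabolics of a Coxeter system have conjugate type sets (this is part of the same standard package as your ``parabolic-inside-parabolic'' principle); hence $\{r,r'\}=u\{q,q'\}u^{-1}$ for some $u\in W$, and since $\{q,q'\}$ is conjugate to $\{s,s'\}$ you are done. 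In short, the final comparison must be carried out inside the single Davis complex of $(W,R)$, between two pairs both taken from $R$, rather than across the two complexes.
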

\begin{proof}
By Corollary~\ref{cor:ReflCompatible}, every element of $R$ is an
$S$-reflection. Therefore, given a pair $\{r, r'\} \subset R$,
Lemma~\ref{lem:2refl} (with the roles of $S$ and $R$ interchanged)
yields a pair $\{s, s'\} \subset S$ such that $W_{\{r, r'\}}$ is
conjugate to a subgroup of $W_{\{s, s'\}}$. By hypothesis the pair
$\{s, s'\}$ is conjugate to some pair $\{t, t'\} \subset R$. In
particular $W_{\{r, r'\}} \subset w W_{\{t, t'\}} w\inv$ for some $w
\in W$. Since $W_{\{r, r'\}}$ and $W_{\{t, t'\}}$ are parabolic
subgroups of the same rank, we deduce successively that we have
$W_{\{r, r'\}} = w W_{\{t, t'\}} w\inv$ and then $ \{r, r'\} = u\{t,
t'\} u\inv$ for some $u \in W$. The result follows since the pairs
$\{s, s'\}$ and $\{t, t'\}$ are conjugate.
\end{proof}

\section{Reflection- and angle-deformations of twist-rigid Coxeter generating sets}
\label{App2}

The goal of this appendix is to prove Corollary~\ref{corollaries to
main theorem}. We present the basic facts from
\cite{HowlettMuhlherr} and \cite{MarquisMuhlherr} needed for that
purpose.

Let $S$ be a Coxeter generating set for $W$. Following
\cite[Definition 5]{HowlettMuhlherr}, we say that an element $\tau
\in S$ is a \textbf{pseudo-transposition} if there is some $J\subset
S$ such that the following conditions hold.
\begin{description}
\item[PT1] The set $J$ contains $\tau$ and for every $s \in S \setminus J$ either $s$ and $\tau$ are not adjacent or $s$ belongs to $J^
\bot$.

\item[PT2] There is an odd number $k$ such that $J$ is of type $C_k$ or $I_2(2k)$, and in the first case
$\tau$ is the unique element of $J$ commuting with all other
elements of $J$ except for one with which $\tau$ generates the
dihedral group of order $8$.
\end{description}

Suppose that $\tau$ is a pseudo-transposition.
We then define $\rho$ to be the longest word $w_J$ of $W_J$, which
is an involution and is central in $W_J$. Let $a$ be the unique
element of $J$ different from $\tau$ and not commuting with $\tau$.
We set $\tau' = \tau a \tau$. Finally, we define $S' = S \cup
\{\tau', \rho\}\setminus \{\tau\} $.
It is shown in \cite[Lemma~6]{HowlettMuhlherr} that $S'$ is also a
Coxeter generating set. We say that $S'$ is an \textbf{elementary
reduction} of $S$.

\begin{lemma}\label{lem:reduction:twist-rigid}
Let $S'$ be an elementary reduction of $S$. Then $S$ is twist-rigid
if and only if $S'$ is twist-rigid.
\end{lemma}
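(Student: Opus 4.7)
I propose to use the combinatorial characterisation of twist-rigidity from Section~\ref{section 2-spherical complete markings}: a Coxeter generating set $T$ is twist-rigid if and only if no irreducible spherical subset of $T$ weakly separates $T$. The plan is then to establish a bijection between weakly-separating irreducible spherical subsets of $S$ and of $S'$.

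The first step is a structural comparison of the Coxeter diagrams of $(W, S)$ and $(W, S')$. Set $J' = (J \setminus \{\tau\}) \cup \{\tau', \rho\}$. By \cite{HowlettMuhlherr}, $J'$ is a Coxeter generating set for $W_J$ of type $D_k \times A_1$ (if $J$ is of type $C_k$) or $I_2(k) \times A_1$ (if $J$ is of type $I_2(2k)$), with $\rho$ corresponding to the $A_1$-factor. In particular $\rho$ commutes with every element of $J'$, and since $\rho \in W_J$, also with every element of $J^\bot$. Next, $S \setminus \{\tau\} = S' \setminus \{\tau', \rho\}$, and the Coxeter matrices of $S$ and $S'$ agree on this common subset. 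Finally, an element $s \in S \setminus (J \cup J^\bot)$ satisfies $m_{s, \tau} = \infty$ by PT1, and, from the construction of $S'$ in \cite{HowlettMuhlherr}, also $m_{s, \tau'} = m_{s, \rho} = \infty$. Thus the diagrams of $(W, S)$ and $(W, S')$ differ only inside the local region $J$ versus $J'$.

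I would then define the correspondence. For an irreducible spherical $L \subset S$: if $\tau \notin L$, set $L' = L$, which remains irreducible spherical in $S'$ with the same Coxeter matrix. If $\tau \in L$, then since the only Dynkin-neighbor of $\tau$ in the diagram of $(W, S)$ is $a$, irreducibility forces $L \subset J$; set $L' = (L \setminus \{\tau\}) \cup \{\tau'\}$ for $L$ of rank $\geq 3$, and handle $L = \{\tau\}$, $L = \{\tau, a\}$, $L = J$ as exceptional cases (mapping to $\{\tau'\}$, to a distinguished dihedral or $A_1$-subset of $J'$, and to $J' \setminus \{\rho\}$ respectively). The inverse correspondence is defined symmetrically; the element $\rho$ is absorbed by the observation that, by centrality of $\rho$ in $W_J$, irreducibility forces $K' = \{\rho\}$ whenever $\rho \in K'$ and $K'$ has rank $\geq 2$.

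The main step, and the expected principal obstacle, is to verify that this correspondence preserves the weakly-separating property. Here the key simplification is that the Coxeter diagrams of $(W, S)$ and $(W, S')$ coincide on their common subgraph $S \cap S' = S\setminus\{\tau\}$, so the component structure of $(S \cap S') \setminus (L \cup L^\bot)$ depends only on the shared data; it then suffices to track whether each of the three special elements $\tau, \tau', \rho$ lies in $L \cup L^\bot$ (resp.\ $L' \cup L'^\bot$) and, if not, to which connected component it attaches via its Dynkin-neighbors. Since the Dynkin-neighbors of $\tau, \tau', \rho$ are all contained in $J \cup J^\bot$, the analysis reduces to combinatorial checks within the finite set $J$ governed by the type $C_k$ or $I_2(2k)$; using in particular that $\rho$ contributes no extra component whenever $L'\subset J'\cup J^\bot$, one checks that the number of connected components of $S \setminus (L \cup L^\bot)$ and of $S' \setminus (L' \cup L'^\bot)$ coincide, which yields the desired equivalence.
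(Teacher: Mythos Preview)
Your approach diverges from the paper's in a structural way. The paper does \emph{not} attempt a bijection between irreducible spherical subsets of $S$ and of $S'$. Instead, assuming $S$ is twist-rigid, it first applies this hypothesis to the irreducible spherical set $K=J\setminus\{\tau\}$: since $K$ does not weakly separate $S$ and the only neighbour of $\tau$ lies in $K\cup K^\bot$, one obtains $S\setminus(K\cup K^\bot)=\{\tau\}$, and hence $S'\setminus(K\cup K^\bot)=\{\tau'\}$. This forces every element of $K$ to be adjacent to every other element of $S'$, so for any irreducible spherical $L\subset S'$ one may assume $K\subset L\cup L^\bot$ (otherwise $S'\setminus(L\cup L^\bot)$ is trivially connected through an element of $K$). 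The case analysis then splits cleanly into $K\subset L$ versus $K\subset L^\bot$, each reducing to connectedness of an explicit set already known to be connected in $S$. The converse direction is argued symmetrically.

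Your proposed correspondence cannot be a bijection, and this is where the genuine gap lies. In the $C_k$ case ($k\geq 3$ odd), the irreducible spherical subsets of $S$ containing $\tau$ are exactly the $k$ chains $\{c_1\},\{c_1,c_2\},\dots,J$. On the $S'$ side, $J'\setminus\{\rho\}$ has type $D_k$ with $\tau'$ and $c_2$ both attached to $c_3$; the irreducible subsets containing $\tau'$ are $\{\tau'\}$, $\{\tau',c_3\}$, and for each $3\leq j\leq k$ both $\{\tau',c_3,\dots,c_j\}$ and $\{\tau',c_2,c_3,\dots,c_j\}$, together with $\{\rho\}$ --- that is $2k-1$ subsets. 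Your rule $L'=(L\setminus\{\tau\})\cup\{\tau'\}$ misses, e.g., $\{\tau',c_3,c_4\}$, and the ``symmetric'' inverse sends $\{\tau',c_3,c_4\}$ to $\{c_1,c_3,c_4\}$, which is reducible in $C_k$. So the map is neither surjective nor does its putative inverse land in irreducible sets. You could still try to show directly that \emph{if} some $L$ weakly separates $S$ then \emph{some} $L'$ weakly separates $S'$ (no bijection needed), but without the structural constraint $S\setminus(K\cup K^\bot)=\{\tau\}$ the component analysis does not reduce to ``combinatorial checks within $J$'': the components of $S\setminus(L\cup L^\bot)$ interact with all of $S$, not just $J$. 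The paper's early use of the twist-rigidity hypothesis to force $S\subset K\cup K^\bot\cup\{\tau\}$ is precisely what collapses this global problem to a local one.
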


\begin{proof}


For $L\subset S\cup S'$, we denote, exceptionally, by $L^\bot$ the
set of all elements of $S\cup S'\setminus L$ commuting with $L$.
Note that if $L\subset S$ (resp.\ $L\subset S'$), then the set
$S\setminus (L\cup L^\bot)$ (resp.\ $S'\setminus (L\cup L^\bot$)) is
independent of whether we consider the usual or the exceptional
definition of $L^\bot$.

\medskip

Assume first that $S$ is twist-rigid and let $L \subset S'$ be an
irreducible spherical subset. We have to show that $S'\setminus
(L\cup L^\bot)$ is \textbf{connected}, \emph{i.e.}\ that it has only
one connected component (for this and the other definitions see
Section~\ref{section 2-spherical complete markings}).


We denote $J'=J\cup \{\tau'\}\setminus \{\tau\} $, which is an
irreducible spherical subset of $S'$. The set $K=J'\setminus
\{\tau'\}=J\setminus \{\tau\}$ is also irreducible. Since $S$ is
twist-rigid, $K$ does not weakly separate $S$ and hence $\tau$
belongs to the unique connected component of $S\setminus (K\cup
K^\bot)$. Since all elements of $S$ adjacent to $\tau$ lie in $K\cup
K^\bot$, we have $S\setminus (K\cup K^\bot)=\{\tau\}$ and
consequently $S'\setminus (K\cup K^\bot)=\{\tau'\}$.

Thus every element of $K$ is adjacent to every other element of
$S'$. Hence there is no loss of generality in assuming $K \subset L
\cup L^\bot$. Since $K$ is irreducible, there are two cases to
consider: either we have $K \subset L$ or $K\subset L^\bot$.

If $K \subset L$, then either we have $\tau\in L$ which implies
$L=J'$, or else, in view of $S' = J' \cup K^\bot$, we have $L = K$.
In the latter case $S' \setminus (L \cup L^\bot)$ is a singleton. In
the former case we have $S' \setminus (L \cup L^\bot) = S \setminus
(J \cup J^\bot)$ and this set is connected since $S$ is twist-rigid.
Thus, if $K \subset L$, then $S' \setminus (L \cup L^\bot)$ is
connected, as desired.

If $K \subset L^\bot$, we first assume $\rho\in L$. Then
$L=\{\rho\}$ and we are done because $S' \setminus (L \cup L^\bot) =
S' \setminus (J' \cup J'^\bot) = S \setminus (J \cup J^\bot)$, which
is connected since $S$ is twist-rigid. We now assume $\rho\in
L^\bot$. Then we also have $\tau'\in L^\bot$ which implies $L\subset
S$. Moreover, then the set $S' \setminus (L \cup L^\bot)$ coincides
with $S\setminus (L \cup L^\bot)$, which is connected since $S$ is
twist-rigid, and we are done.

Finally, it remains to consider the case where $K \subset L^\bot$
and $\rho$ belongs to $S' \setminus (L \cup L^\bot)$. Then we also
have $\tau'\in S'\setminus (L \cup L^\bot)$, hence $L$ is contained
$S$. It suffices to show that $S'\setminus (L\cup L^\bot\cup
\{\rho\})$ is connected. The bijection from $S'\setminus (L\cup
L^\bot\cup \{\rho\})$ onto $S\setminus (L\cup L^\bot)$, which maps
$\tau'$ to $\tau$ and restricts to the identity outside $\{\tau'\}$,
preserves the adjacency relation. Hence the connectedness of
$S'\setminus (L\cup L^\bot\cup \{\rho\})$ follows from the
connectedness of $S\setminus (L \cup L^\bot)$.

\medskip
Similar arguments show that, conversely, if $S'$ is twist-rigid and
$L$ is an irreducible spherical subset of $S$, then $S \setminus (L
\cup L^\bot)$ is connected.
\end{proof}

A Coxeter generating set is called \textbf{reduced} if it does not
contain any pseudo-transposition. For any Coxeter generating set $S$
there is a sequence $S = S_1, \dots, S_n$ of Coxeter generating
sets, where $n\leq |S|$, such that every $S_{i+1}$ is an elementary
reduction of $S_i$, and $S_n$ is reduced (\cite[Proposition
7]{HowlettMuhlherr}).



\begin{thm}[{\cite[Theorem 1]{HowlettMuhlherr}}]
\label{alg-reflection} Let $R$ be a reduced Coxeter generating set
for $W$. There is an explicit finite subgroup $\Sigma \subset
\Aut(W)$ such that for each reduced Coxeter generating set $S$ for
$W$, there is some $\alpha \in \Sigma$ such that $\alpha(S)$ and $R$
are reflection-compatible.
\end{thm}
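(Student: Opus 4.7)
The plan is to construct $\Sigma$ as a finite group of automorphisms transporting the reflection sets of all reduced Coxeter generating sets of $W$ onto the reflection set of $R$. By Theorem~\ref{small Coxeter groups}, a Coxeter generating set $S$ is determined up to $W$-conjugation by its reflection set $\mathcal{R}_S$: one recovers $S$ as the reflections in the walls adjacent to any chamber of the arrangement of $\mathcal{R}_S$ in the Davis complex of $(W,R)$. Consequently, $\alpha(S)$ is reflection-compatible with $R$ precisely when $\alpha(\mathcal{R}_S)=\mathcal{R}_R$, and the theorem reduces to showing that $\mathcal{R}_R$ lies in the $\Sigma$-orbit of $\mathcal{R}_S$ for every reduced $S$.

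Next, I would prove the finiteness statement that the set of $W$-orbits of reflection sets arising from reduced Coxeter generating sets for $W$ is finite. Here the reduced hypothesis is essential: the known moves (as surveyed in \cite{MuhlherrSurvey}) between Coxeter generating sets that genuinely change the reflection set either rely on pseudo-transpositions (forbidden in reduced sets) or are localised inside finite reflection subgroups whose structure is tightly constrained by the Dynkin classification, typically of type $C_k$ or $I_2(2k)$ with $k$ odd. A case-by-case analysis of these local configurations bounds the number of orbits.

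For each $W$-orbit in this finite collection, I would then construct an explicit automorphism of $W$ sending it to the orbit of $\mathcal{R}_R$, combining the diagram symmetries of $(W,R)$ with finitely many explicit deformations associated to $C_k$ subdiagrams. The subgroup $\Sigma$ generated by these representatives, after composition with suitable inner automorphisms so that each element normalises the base chamber of $(W,R)$, acts on $R$ through only finitely many permutations, hence intersects $\mathrm{Inn}(W)$ trivially and is finite. Given any reduced $S$, the $W$-orbit of $\mathcal{R}_S$ is hit by some $\alpha\in\Sigma$ applied to $\mathcal{R}_R$, and conjugating $S$ within its $W$-orbit yields the desired reflection-compatibility.

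The main obstacle is the second step: establishing that only finitely many $W$-orbits of reflection sets arise from reduced generating sets. This requires a complete understanding of all moves that take one reduced generating set to another with a genuinely different reflection set, and a proof that every such move corresponds to a specific element of $\Sigma$. Carrying this out rigorously is the core of \cite{HowlettMuhlherr} and involves tracing each move back to a local $C_k$ or $I_2(2k)$ configuration, which is precisely the kind of configuration that a pseudo-transposition reduction would have eliminated.
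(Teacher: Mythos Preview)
The paper does not prove this theorem. Theorem~\ref{alg-reflection} is stated in Appendix~\ref{App2} purely as a citation of \cite[Theorem~1]{HowlettMuhlherr}, with no argument given; the appendix only uses it as a black box in the proof of Corollary~\ref{corollaries to main theorem}. So there is no ``paper's own proof'' to compare your proposal against.

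As for the proposal itself: what you have written is a plausible strategic outline rather than a proof. You correctly identify that the heart of the matter is showing that reduced Coxeter generating sets yield only finitely many reflection classes up to an explicit finite group of automorphisms, and you correctly flag that this classification --- tracing every reflection-changing move back to a local $C_k$ or $I_2(2k)$ configuration --- is exactly the substantial content of \cite{HowlettMuhlherr}. But you have not carried out any of that classification, and your third paragraph contains a gap: generating $\Sigma$ from finitely many automorphisms does not by itself make $\Sigma$ finite, and the argument that $\Sigma$ ``intersects $\mathrm{Inn}(W)$ trivially and is finite'' because its elements act on $R$ through finitely many permutations is circular (it presupposes that the generators already preserve $\mathcal{R}_R$, which is what you are trying to arrange). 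In short, your sketch points in the right direction but would need the full machinery of \cite{HowlettMuhlherr} to become a proof; the present paper simply invokes that result.
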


This result is supplemented by the following. We use freely the
terminology of \cite{MarquisMuhlherr}.
\begin{thm}[{\cite[Theorem 2]{MarquisMuhlherr}}]
\label{alg-angle} Let $S$ and $R$ be reflection-compatible Coxeter
generating sets for $W$. Then there is a sequence $S = S_1, \dots,
S_n$ of Coxeter generating sets, where $n\leq |S|$, such that every
$S_{i+1}$ is a $J_i$-deformation of $S_i$, for some $J_i\subset
S_i$, and $S_n$ and $R$ are angle-compatible.
\end{thm}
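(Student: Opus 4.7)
My plan is to prove Theorem~\ref{alg-angle} by induction on $|S|$, handling one generator at a time, while carefully exploiting the fact that reflection-compatibility reduces the problem to one inside finite parabolic subgroups.

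First, I would fix an enumeration $s_1, \ldots, s_n$ of $S$ and try to construct the sequence $S = S_1, \ldots, S_{n+1}$ so that at stage $i$, after at most $i$ deformations, every spherical pair contained in $\{s_1, \ldots, s_i\}$ (with these viewed as elements of the current $S_i$) is conjugate in $W$ to a pair in $R$. The base case $i=0$ is vacuous; the final stage $i=n$ yields the desired $S_n$. Since reflection-compatibility is preserved under $J$-deformations (this is part of the definition in \cite{MarquisMuhlherr}), we only need to upgrade reflection-compatibility to angle-compatibility one generator at a time.

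The inductive step is the core of the argument. Suppose after stage $i$ we have $S_i$ with the first $i$ generators ``correctly aligned'' with $R$. Consider the generator $s_{i+1}$: for each $j \le i$ with $\{s_j, s_{i+1}\}$ spherical, the subgroup $W_{\{s_j, s_{i+1}\}}$ is a finite dihedral group whose reflections are a fixed $W$-conjugate set (independent of the deformation). The task is to rotate $s_{i+1}$ among these reflections so that the pair $\{s_j, s_{i+1}\}$ becomes conjugate to a spherical pair in $R$, without disturbing $\{s_1, \ldots, s_i\}$. To do this I would let $J_i \subset S_i$ be the smallest spherical subset containing $s_{i+1}$ and the finitely many $s_j$ with ill-placed angles; inside the finite group $W_{J_i}$, both $J_i$ and an appropriate $R$-generated Coxeter set (provided by Theorem~\ref{small Coxeter groups}) are reflection-compatible generators of $W_{J_i}$. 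The key claim is that the finite case of the theorem (which should be handled separately, by direct classification of Coxeter generating sets of finite Coxeter groups) yields a single $J_i$-deformation that makes the generators inside $W_{J_i}$ angle-compatible with the $R$-generators, and this deformation is exactly the operation needed for the inductive step.

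Two subsidiary verifications are needed. First, I would check that a $J_i$-deformation preserves the already-aligned generators outside $J_i$: this should follow from the definition of $J$-deformation, which only modifies generators inside $W_{J_i}$ (the elements of $S_i \setminus J_i$ are untouched, and their adjacency relations with the new generators inside $W_{J_i}$ are determined purely by the reflection subgroup structure). Second, I would check that the deformation does not spoil previously corrected generators $s_j$ for $j \le i$: if $s_j \notin J_i$, this is automatic; if $s_j \in J_i$ then the local correction inside $W_{J_i}$ is, by induction applied inside the finite group $W_{J_i}$, arranged to respect the previously fixed alignment.

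The main obstacle, I expect, will be the finite case: proving that any two reflection-compatible Coxeter generating sets of a finite Coxeter group can be brought into angle-compatibility by a single deformation, with explicit control over which generators get modified. This reduces (by case analysis on the irreducible components) to a calculation in the spherical Coxeter groups of type $A_n$, $B_n/C_n$, $D_n$, $I_2(m)$, $F_4$, $H_3$, $H_4$, $E_{6,7,8}$, where the possible generating sets are well understood. The bound $n \le |S|$ then emerges from the observation that each inductive step ``freezes'' at least one additional generator.
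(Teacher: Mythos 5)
First, note that the paper does not prove this statement at all: Theorem~\ref{alg-angle} is quoted verbatim from \cite[Theorem~2]{MarquisMuhlherr} and used as a black box in Appendix~\ref{App2}. So there is no in-paper proof to compare against; your proposal has to stand on its own as a reconstruction of the Marquis--M\"uhlherr argument, and as such it has a genuine gap.

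The gap is in the step you describe as a ``subsidiary verification'': that a correction performed inside a finite parabolic subgroup $W_{J_i}$ leaves the generators outside $J_i$ essentially undisturbed, with their relations to the new generators ``determined purely by the reflection subgroup structure.'' This is exactly the hard point, and it is false as stated. If you replace $s_{i+1}$ by another reflection $s_{i+1}'$ of $W_{J_i}$ so as to fix an angle inside $W_{J_i}$, there is no a priori reason why $s_{i+1}' t$ should have finite order for $t \in S_i \setminus J_i$ adjacent to $s_{i+1}$, nor why the resulting set should satisfy a Coxeter presentation for $W$ at all. A $J$-deformation in the sense of \cite{MarquisMuhlherr} is not ``any local modification supported on $J$''; it is a specific operation whose existence and well-definedness depend on global conditions on the Coxeter diagram (the conditions such as (TWt) that the paper invokes in Lemma~\ref{lem:AngleDeformation} are of this global nature). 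The actual content of \cite[Theorem~2]{MarquisMuhlherr} is precisely the classification of which angles can disagree between reflection-compatible generating sets and the verification that each such disagreement can be repaired by a globally valid deformation; your proposal assumes this rather than proves it. Relatedly, your reduction of the ``finite case'' to a case-by-case classification of spherical types is plausible but does not address the globalization issue, which is where the real work lies. The bookkeeping (induction on generators, the bound $n \le |S|$) is fine in outline, but it rests on the unproved local-to-global step.
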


Although in general $J_i$-deformations do not have to extend to
automorphisms of $W$, this is in fact the case if $S$ is
twist-rigid.

\begin{lemma}\label{lem:AngleDeformation}
Let $S$ be a twist-rigid Coxeter generating set for $W$. Then any
$J$-deformation of $S$ extends to an automorphism of $W$.
\end{lemma}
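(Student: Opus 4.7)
The plan is to unpack the definition of a $J$-deformation from \cite{MarquisMuhlherr} and show that the twist-rigidity of $S$ forces the combinatorial data defining the deformation to be degenerate enough that the deformation is realized by either the identity or an inner automorphism of $W$. Recall that a $J$-deformation is parametrised by a spherical subset $J\subset S$ together with a combinatorial choice specifying which elements of $S\setminus (J\cup J^\bot)$ are to be conjugated by the longest element $w_J$ of $W_J$ and which are to be left fixed. The elementary twist is the prototypical instance, corresponding to a partition $S\setminus (J\cup J^\bot)=A\sqcup B$ with no adjacency between $A$ and $B$, where the deformation $\sigma$ fixes $A\cup J^\bot$ pointwise and sends each $s\in B\cup J$ to $w_J s w_J$.

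First I would reduce to the case where $J$ is irreducible. If $J=J_1\sqcup\cdots\sqcup J_k$ is the decomposition into mutually orthogonal irreducible components, then $w_J=w_{J_1}\cdots w_{J_k}$ with pairwise commuting factors, and a $J$-deformation factors as a composition of $J_i$-deformations; it therefore suffices to extend each factor to an automorphism of $W$.

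Assume now that $J$ is irreducible spherical. The twist-rigidity hypothesis on $S$ says, in the terminology of Section~\ref{section 2-spherical complete markings}, that $J$ does not weakly separate $S$; equivalently, $S\setminus (J\cup J^\bot)$ is connected in the Coxeter diagram. Hence there is no nontrivial partition $A\sqcup B=S\setminus (J\cup J^\bot)$ with no $A$--$B$ adjacencies, so the combinatorial data specifying the $J$-deformation must be trivial: one of the parts is empty. If $A=\emptyset$, then for every $s\in S$ we have $\sigma(s)=w_J s w_J$ (using that $w_J$ centralises $J^\bot$), so $\sigma$ is the restriction to $S$ of the inner automorphism $\mathrm{Inn}(w_J)$ and extends accordingly. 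If $B=\emptyset$, then $\sigma$ fixes $S\setminus J$ pointwise and acts on $J$ by the opposition involution $s\mapsto w_J s w_J$; since this preserves $J$ setwise we have $\sigma(S)=S$ as a subset of $W$, and the identity automorphism of $W$ carries $S$ to $\sigma(S)$.

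The main obstacle will be verifying that the notion of $J$-deformation in \cite{MarquisMuhlherr}, which is a priori weaker than an elementary twist (it need not come with an amalgamated product decomposition of $W$ over $W_{J\cup J^\bot}$), is still governed by a partition-type datum that collapses under the connectedness of $S\setminus (J\cup J^\bot)$. Once this combinatorial point is confirmed against the precise definition in \cite{MarquisMuhlherr}, the argument becomes purely diagrammatic and proceeds as outlined above.
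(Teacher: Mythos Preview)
Your proposal rests on a misidentification of what a $J$-deformation is in \cite{MarquisMuhlherr}. A $J$-deformation is not a twist-type operation that conjugates a piece of $S$ by $w_J$; it is an \emph{angle-deformation}: a map $\delta\colon S\to W$ sending certain generators to carefully chosen nearby reflections so that $\delta(S)$ is again a Coxeter generating set, but typically with a \emph{different} Coxeter matrix. The elementary twist is not the prototypical instance---indeed, elementary twists always preserve the Coxeter matrix and hence always extend to automorphisms, whereas the whole point of Theorem~\ref{alg-angle} is that $J$-deformations bridge between reflection-compatible Coxeter generating sets that need not be angle-compatible. Your partition datum $A\sqcup B$ with conjugation by $w_J$ simply does not describe these maps, so the ``main obstacle'' you flag is not a verification step but a genuine mismatch: the check fails.

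The paper's argument is quite different in structure. It uses the classification of $J$-deformations in \cite{MarquisMuhlherr}: all but one type extend to automorphisms for general reasons, and the exceptional type (their Section~7.6) comes equipped with distinguished elements $r,s,t\in S$ and a set $J^\infty$ satisfying specific separation conditions. The paper then applies twist-rigidity not to $J$ but to the \emph{singletons} $\{r\}$ and $\{t\}$: the definition of the exceptional deformation forces $\{s\}$ and $J^\infty\setminus\{r\}^\perp$ into distinct components of $S\setminus(\{r\}\cup\{r\}^\perp)$, so twist-rigidity gives $J^\infty\subset\{r\}^\perp$; similarly $J^\infty\subset\{t\}^\perp$. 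Once $J^\infty\subset\{r,t\}^\perp$, their Lemma~7.14 shows the Coxeter matrix is unchanged, hence $\delta$ extends. To repair your approach you would need to abandon the twist model and engage with the actual case analysis of \cite[Section~7]{MarquisMuhlherr}.
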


\begin{proof}
The only $J$-deformation which might \emph{a priori} not extend to
an automorphism of $W$ is described in
\cite[Section~7.6]{MarquisMuhlherr}. By
\cite[Definition~7.6]{MarquisMuhlherr}, the sets $\{s\}$ and
$J^\infty \setminus \{r\}^\perp$ fall into two distinct connected
components of $S \setminus (\{r\} \cup \{r\}^\perp)$. Thus if $S$ is
twist-rigid, then $ J^\infty \subset \{r\}^\perp$.

Moreover, we have $S  = K \cup J^\perp \cup J^\infty$ by
\cite[Lemma~7.7]{MarquisMuhlherr} and every vertex of $J^\perp$
adjacent to some vertex of $J^\infty$ actually belongs to
$\{t\}^\perp$ by Condition~(TWt) from
\cite[Definition~7.6]{MarquisMuhlherr}. It follows that the sets
$\{s\}$ and $J^\infty \setminus \{t\}^\perp$ fall into two distinct
connected components of $S \setminus (\{t\} \cup \{t\}^\perp)$. Thus
if $S$ is twist-rigid, then $ J^\infty \subset \{t\}^\perp$.

We infer that if $S$ is twist-rigid,
then $J^\infty$ must be contained in $\{r,t\}^\perp$. In view of
\cite[Lemma~7.14]{MarquisMuhlherr} the Coxeter generating set
$S$ and its $J$-deformation $\delta(S)$ have the same Coxeter matrix.
\end{proof}

We are now ready for the following.

\begin{proof}[Proof of Corollary~\ref{corollaries to main theorem}]
Let $R$ be a twist-rigid Coxeter generating set for $W$. A sequence
of elementary reductions transforms $R$ into a reduced Coxeter
generating set $R'$. By Lemma~\ref{lem:reduction:twist-rigid} the
set $R'$ is twist-rigid.

Let now $S$ be any other Coxeter generating set for $W$. Let $S'$ be
a reduced Coxeter generating set obtained from $S$ by a sequence of
elementary reductions. By Theorem~\ref{alg-reflection} there is an
automorphism $\alpha \in \Sigma\subset\Aut(W)$ such that
$\alpha(S')$ and $R'$ are reflection-compatible. By
Theorem~\ref{alg-angle} and Lemma~\ref{lem:AngleDeformation} there
is a sequence of $J$-deformations which extend to automorphisms of
$W$ transforming $\alpha(S')$ into a Coxeter generating set $S''$
such that $S''$ and $R'$ are angle-compatible. By Theorem
\ref{main}, the set $R'$ is conjugate to $S''$ and consequently also
to $\alpha(S')$ and to $S'$. Then by Lemma
\ref{lem:reduction:twist-rigid} the set $S$ is twist-rigid. This
proves assertion (i).

A conjugate of the set $S$ can be obtained from $R'$ by composing
with an element of $\Sigma$ and an \emph{a priori} bounded number of
$J$-deformations and operations inverse to elementary reductions.
This yields assertion (ii) and in particular assertion (iii).
\end{proof}

\begin{bibdiv}
\begin{biblist}

\bib{Bou}{book}{
   author={Bourbaki, N.},
   title={\'El\'ements de math\'ematique. Fasc. XXXIV. Groupes et alg\`ebres
   de Lie. Chapitre IV: Groupes de Coxeter et syst\`emes de Tits. Chapitre
   V: Groupes engendr\'es par des r\'eflexions. Chapitre VI: syst\`emes de
   racines},
   language={French},
   series={Actualit\'es Scientifiques et Industrielles, No. 1337},
   publisher={Hermann},
   place={Paris},
   date={1968},
   pages={288 pp. (loose errata)}
   }

\bib{BMMN}{article}{
   author={Brady, N.},
   author={McCammond, J. P.},
   author={M{\"u}hlherr, B.},
   author={Neumann, W. D.},
   title={Rigidity of Coxeter groups and Artin groups},
   booktitle={Proceedings of the Conference on Geometric and Combinatorial
   Group Theory, Part I (Haifa, 2000)},
   journal={Geom. Dedicata},
   volume={94},
   date={2002},
   pages={91--109}
   }

\bib{BH}{article}{
   author={Brink, B.},
   author={Howlett, R. B.},
   title={A finiteness property and an automatic structure for Coxeter
   groups},
   journal={Math. Ann.},
   volume={296},
   date={1993},
   number={1},
   pages={179--190}
   }

\bib{CM}{article}{
   author={Caprace, P.-E.},
   author={M{\"u}hlherr, B.},
   title={Reflection rigidity of 2-spherical Coxeter groups},
   journal={Proc. Lond. Math. Soc. (3)},
   volume={94},
   date={2007},
   number={2},
   pages={520--542}}

\bib{Car}{article}{
   author={Carette, M.},
   title={Rigidit\'e \`a r\'eflexions dans les groupes de Coxeter},
   date={2006}
   status={Master's thesis, Universit\'e Libre de Bruxelles}
   }

\bib{Deodhar}{article}{
   author={Deodhar, V. V.},
   title={On the root system of a Coxeter group},
   journal={Comm. Algebra},
   volume={10},
   date={1982},
   number={6},
   pages={611--630}
}

\bib{Deodhar_reflections}{article}{
   author={Deodhar, V. V.},
   title={A note on subgroups generated by reflections in Coxeter groups},
   journal={Arch. Math. (Basel)},
   volume={53},
   date={1989},
   number={6},
   pages={543--546}
}

\bib{Hee}{unpublished}{
   author={H\'ee, J.-Y.},
   title={Le c\^one imaginaire d'une base de racine sur $\textbf R$ (r\'esum\'e)},
   status={th\`ese d'\'etat, Universit\'e d'Orsay}
   date={1990},
   pages={275--276}
   }

\bib{HowlettMuhlherr}{unpublished}{
   author={Howlett, R. B.},
   author={M{\"u}hlherr, B.},
   title={Isomorphisms of Coxeter groups which do not preserve reflections},
   date={2004},
   status={preprint}
   }

\bib{HRT}{article}{
   author={Howlett, R. B.},
   author={Rowley, P. J.},
   author={Taylor, D. E.},
   title={On outer automorphism groups of Coxeter groups},
   journal={Manuscripta Math.},
   volume={93},
   date={1997},
   number={4},
   pages={499--513}
   }

\bib{Kra}{article}{
   author={Krammer, D.},
   title={The conjugacy problem for Coxeter groups},
   journal={Groups Geom. Dyn.},
   volume={3},
   date={2009},
   number={1},
   pages={71--171}
   }

\bib{MarquisMuhlherr}{article}{
   author={Marquis, T.},
   author={M{\"u}hlherr, B.},
   title={Angle-deformations in Coxeter groups},
   journal={Algebr. Geom. Topol.},
   volume={8},
   date={2008},
   number={4},
   pages={2175--2208}
   }

\bib{MM}{article}{
   author={Masur, H. A.},
   author={Minsky, Y. N.},
   title={Geometry of the complex of curves. II. Hierarchical structure},
   journal={Geom. Funct. Anal.},
   volume={10},
   date={2000},
   number={4},
   pages={902--974}
   }

\bib{MuhlherrSurvey}{article}{
   author={M{\"u}hlherr, B.},
   title={The isomorphism problem for Coxeter groups},
   conference={
      title={The Coxeter legacy},
   },
   book={
      publisher={Amer. Math. Soc.},
      place={Providence, RI},
   },
   date={2006},
   pages={1--15}
   }

\bib{MW}{article}{
   author={M{\"u}hlherr, B.},
   author={Weidmann, R.},
   title={Rigidity of skew-angled Coxeter groups},
   journal={Adv. Geom.},
   volume={2},
   date={2002},
   number={4},
   pages={391--415}
   }

\bib{RS}{article}{
   author={Ratcliffe, J. G.},
   author={Tschantz, S. T.},
   title={Chordal Coxeter Groups},
   journal={Geom. Dedicata},
   volume={136},
   number={1},
   date={2008},
   pages={57--77}
   }

\end{biblist}
\end{bibdiv}

\end{document}